\theoremstyle{plain}
\newtheorem{prop}{Proposition}[section]
\newtheorem{lem}{Lemma}[section] \newtheorem{cor}{Corollary}[section]
\newtheorem{defi}{Definition}[section]
\theoremstyle{remark}
\newtheorem{rmk}{Remark}
\newcommand {\R} {\mathbb{R}} \newcommand {\Z} {\mathbb{Z}}
 \newcommand {\N} {\mathbb{N}}
\newcommand {\p} {\partial}
\newcommand {\D} {\Delta}
\newcommand {\dt} {\partial_t}
\newcommand{\va}{\varphi}
\newcommand {\Ds} {(-\Delta)^{s}}
\newcommand {\drr} {\partial_r}
\newcommand {\e} {\eta_{\delta,r}}
\newcommand {\E} {E_{\lambda}}
\newcommand {\Et} {\tilde{E}_{\lambda}}
\DeclareMathOperator {\supp} { supp}
\DeclareMathOperator {\dist} { dist}
\DeclareMathOperator {\spec} { spec}
\newcommand {\dv} { \frac{\partial}{\partial \varphi}}
\DeclareMathOperator{\la}{l}
\DeclareMathOperator{\lb}{\bar{l}}
\DeclareMathOperator{\tr}{tr}
\begin{document}

\title{Unique Continuation for Fractional Schr\"odinger Equations with Rough Potentials\footnote{This work is part of the PhD thesis of the author written under the supervision of Prof. Herbert Koch to whom she owes great gratitude for his persistent support and advice. She thanks the Deutsche Telekomstiftung and the Hausdorff Center for Mathematics for financial support. }}
\author{Angkana R\"uland\footnote{Mathematisches Institut, Universit\"at Bonn, Endenicher Allee 60, 53115 Bonn, Germany, rueland@math.uni-bonn.de.}}
\maketitle

\begin{abstract}
This article deals with the weak and strong unique continuation principle for fractional Schrödinger equations with scaling-critical and rough potentials via Carleman estimates. Our methods allow to apply the results to ``variable coefficient" versions of  fractional Schrödinger equations. 
\end{abstract}

\tableofcontents

\section{Introduction}

\label{sec:resultsfl}

We consider the strong unique continuation problem (SUCP) for (weak solutions of) fractional Schrödinger equations, i.e. $u \in H^{s}$, $s\in(0,1)$, satisfies
\begin{align*}
(-\D)^{s} u = Vu \mbox{ in } \R^n,
\end{align*}
(in a weak sense) and vanishes of infinite order at the origin. We prove that under appropriate conditions on $V$ (including scaling-critical Hardy potentials) the solution $u$ vanishes identically:

\begin{prop}[SUCP]
Let $u\in H^{s}$, $s\in(0,1)$, solve
\begin{align}
\label{eq:frac}
(-\D)^{s} u = Vu \mbox{ in } \R^n,
\end{align}
with $V= V_{1} + V_{2}$,
\begin{align*}
V_{1}(y) = |y|^{-2s}h\left(\frac{y}{|y|}\right), \ \  h\in L^{\infty} , \ \ |V_{2}(y)| \leq c|y|^{-2s+\epsilon}. 
\end{align*}
For $s < \frac{1}{2}$, we additionally require that one of the following assumptions is satisfied:
\begin{itemize}
\item the potential $V_2$ satisfies
$
V_{2}\in C^{1}(\R^n\setminus \{0\}) \mbox{ and }
|y\cdot \nabla V_{2}| \lesssim c|y|^{-2s+\epsilon},
$
\item $s\in [\frac{1}{4}, \frac{1}{2})$ and $V_1 \equiv 0$.
\end{itemize}
Then if $u$ vanishes of infinite order at $y=0$, this already implies $u\equiv 0$.
\end{prop}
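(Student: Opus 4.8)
The plan is to run the Caffarelli--Silvestre extension argument together with a scale-invariant Carleman estimate. Let $\tilde u$ be the $x_{n+1}^{1-2s}$-harmonic extension of $u$ to the upper half space $\R^{n+1}_{+}=\{(x,x_{n+1}):x_{n+1}>0\}$, i.e. the weak solution of $\mathcal{L}_{s}\tilde u:=\nabla\cdot(x_{n+1}^{1-2s}\nabla\tilde u)=0$ in $\R^{n+1}_{+}$ with $\tilde u(\cdot,0)=u$. Then \eqref{eq:frac} is equivalent to the generalized Neumann condition $\lim_{x_{n+1}\to 0}x_{n+1}^{1-2s}\partial_{n+1}\tilde u=-c_{n,s}\,Vu$ on $\R^{n}\times\{0\}$ for a positive constant $c_{n,s}$. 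Using the Poisson kernel representation of $\tilde u$ together with the polynomial decay of the kernel, I would first show that infinite order vanishing of $u$ at the origin, in the sense that $\|u\|_{L^{2}(B_{r})}=O(r^{N})$ for every $N$, propagates to infinite order vanishing of $\tilde u$ at the origin in the weighted sense $\int_{B_{r}^{+}}x_{n+1}^{1-2s}(|\tilde u|^{2}+\rho^{2}|\nabla\tilde u|^{2})\,dx\,dx_{n+1}=O(r^{N})$, where $\rho=|(x,x_{n+1})|$.

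The core of the argument is a Carleman estimate of the schematic form
$$
\tau^{3}\big\|e^{\tau\phi}\,x_{n+1}^{(1-2s)/2}\,w\big\|_{L^{2}(\R^{n+1}_{+})}^{2}+(\text{gradient terms})\;\lesssim\;\big\|e^{\tau\phi}\,x_{n+1}^{(1-2s)/2}\,\rho^{2}\,\mathcal{L}_{s}w\big\|_{L^{2}(\R^{n+1}_{+})}^{2}+B_{\tau}[w],
$$
valid for $w$ supported in a punctured half-ball about the origin and for $\tau$ large outside a discrete exceptional set. Here $\phi=\phi(\rho)$ is a radial, logarithmically convex weight (a small convex perturbation of $-\log\rho$, chosen so that the estimate is scale-invariant) and $B_{\tau}[w]$ collects the boundary contributions on $\R^{n}\times\{0\}$ generated by integration by parts. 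It is proven in the standard way: conjugate $\mathcal{L}_{s}$ with $e^{\tau\phi}$, pass to the polar-type coordinates $t=-\log\rho$, split the conjugated operator into its symmetric and antisymmetric parts, and compute the commutator; the convexity of $\phi$ yields the positive bulk term with the gain in $\tau$.

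Applying this to a radial cutoff $\chi\tilde u$ of the extension, the bulk right-hand side reduces, by $\mathcal{L}_{s}\tilde u=0$, to commutator and cutoff terms supported on annuli where $\nabla\chi\ne 0$, and these are killed as $\tau\to\infty$ by the infinite order vanishing from the first step. The boundary term $B_{\tau}[\chi\tilde u]$ contains the essential interaction $\sim\int_{\R^{n}}e^{2\tau\phi}(\cdots)\,Vu$. The subcritical piece $V_{2}$ gains a factor $\rho^{\epsilon}$ and is absorbed by taking $\tau$ large; the critical Hardy piece $V_{1}$, which scales exactly like $(-\Delta)^{s}$, cannot be absorbed this way. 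To handle it I would incorporate the sharp trace Hardy inequality for the extension (the ground state substitution for $(-\Delta)^{s}$ lifted to $\R^{n+1}_{+}$) into the left-hand side of the Carleman estimate, so that $\|h\|_{L^{\infty}}$ times the Hardy constant is strictly dominated, leaving a positive remainder that the $\tau^{3}$-term absorbs. Letting $\tau\to\infty$ then forces $\tilde u\equiv 0$, hence $u\equiv 0$, in a punctured neighbourhood of the origin, and weak unique continuation together with connectedness propagates this to $u\equiv 0$ on all of $\R^{n}$.

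The delicate point, and the reason for the extra hypotheses when $s<\tfrac12$, is the boundary term $B_{\tau}[w]$: conjugating the degenerate operator and integrating by parts leaves on $\R^{n}\times\{0\}$ a bilinear form in $w$ and the weighted normal derivative $x_{n+1}^{1-2s}\partial_{n+1}w$, weighted by $e^{2\tau\phi}$ and powers of $\rho$. For $s\ge\tfrac12$ (so $1-2s\le 0$) this form carries a favourable sign and the estimate closes immediately. For $s<\tfrac12$ the sign is unfavourable and must be controlled through one of the two alternatives: if $V_{2}\in C^{1}(\R^{n}\setminus\{0\})$ with $|y\cdot\nabla V_{2}|\lesssim|y|^{-2s+\epsilon}$, one integrates by parts once more in the tangential variables, turning the offending boundary term into terms involving $V_{2}$ and $y\cdot\nabla V_{2}$, both subcritical and therefore absorbable for $\tau$ large; if instead $V_{1}\equiv 0$ and $s\in[\tfrac14,\tfrac12)$, the genuinely subcritical decay $|V|=|V_{2}|\lesssim|y|^{-2s+\epsilon}$ combined with the restricted range of $s$ makes that boundary term lower order compared with the bulk gain, so no further integration by parts is needed. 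This boundary analysis is the main obstacle; the remainder is essentially bookkeeping.
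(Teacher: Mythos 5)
Your overall architecture (Caffarelli--Silvestre extension, conjugation in logarithmic polar coordinates, symmetric/antisymmetric splitting, absorption of the boundary potential terms) matches the paper's, but two of your steps contain genuine gaps.

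First, the claimed propagation of infinite-order vanishing from the trace $u$ to the extension $\tilde u$ in the weighted bulk sense is false. The Poisson kernel for the extension behaves like $x_{n+1}^{2s}\,(|x-y|^{2}+x_{n+1}^{2})^{-(n+2s)/2}$, so even if $\|u\|_{L^{2}(B_{r})}=O(r^{N})$ for all $N$, the far-field part of the convolution contributes $|\tilde u(x,x_{n+1})|\sim x_{n+1}^{2s}$ near the origin: the extension generically vanishes only to the \emph{finite} order $2s$ in the normal direction. Consequently $\int_{B_{r}^{+}}x_{n+1}^{1-2s}|\tilde u|^{2}$ is only $O(r^{n+2+2s})$, not $O(r^{N})$ for all $N$, and your cutoff errors at the inner radius do not vanish as the inner cutoff scale tends to zero. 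This is precisely the obstruction the paper addresses by splitting into two cases: if the extension does vanish of infinite order in both tangential and normal directions, the Carleman estimate applies directly (Corollary \ref{cor:schp}); otherwise one derives a doubling inequality from the Carleman estimate, performs a blow-up $w_{\sigma}(y)=w(\sigma y)/\|\cdot\|$, uses doubling plus a Cacciopolli estimate for compactness, and identifies the limit as a solution with vanishing Dirichlet \emph{and} Neumann data, which the weak unique continuation principle (Proposition \ref{prop:wuc}) kills. Your proposal has no substitute for this compactness/blow-up step.

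Second, your treatment of the critical piece $V_{1}$ by inserting the sharp trace Hardy inequality so that ``$\|h\|_{L^{\infty}}$ times the Hardy constant is strictly dominated'' implicitly imposes a smallness (or sign) condition on $h$, whereas the proposition allows arbitrary $h\in L^{\infty}$ --- indeed the paper advertises arbitrarily large scaling-critical potentials as one of its main improvements over the frequency-function literature. A direct absorption also cannot work because the left-hand boundary term in the Carleman estimate carries a logarithmic loss $(1+\ln(|y|)^{2})^{-1}$ that the right-hand side does not. The paper's mechanism is different: in conformal coordinates $e^{2st}V_{1}=h(\theta)$ is independent of $t$, so an integration by parts in $t$ makes the dangerous bilinear boundary terms drop out up to a single term of the form $\tau\int \partial_{t}^{2}\phi\,h\,u^{2}$, which gains from the convexity weight and is then controlled by the trace interpolation inequality against the $\tau^{3}$ bulk term. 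Without exploiting this exact homogeneity you cannot close the estimate for large $h$. Your heuristic for the $s<\tfrac12$ hypotheses (a sign change in the boundary bilinear form at $s=\tfrac12$) is also not the operative mechanism: the dichotomy at $s=\tfrac14$ comes from comparing the powers $\tau^{s}$ on the left with $\tau^{(1-2s)/2}$ on the right of the boundary terms, and the $C^{1}$ hypothesis on $V_{2}$ is used to integrate by parts in $t$ when that comparison fails.
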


Here the infinite order of vanishing is adapted to the degenerate elliptic equation derived via the Caffarelli extension. We define the notions of vanishing of infinite order for bulk and for corresponding boundary integrals.

\begin{defi}[Vanishing of Infinite Order]
\label{defi:vio}
A function $u\in L^{2}_{loc}(y_{n+1}^{1-2s}dy,\R^{n+1}_{+})$ \emph{vanishes of infinite order at zero (in the bulk)} if for every $m\in\N$ 
\begin{align*}
\lim\limits_{r\rightarrow 0}r^{-m}\int\limits_{B_{r}^+(0)} y_{n+1}^{1-2s}u^2 dy = 0.
\end{align*}
A function $u\in L^{2}_{loc}(\R^n)$ \emph{vanishes of infinite order at zero (at the boundary)} if for every $m\in \N$
\begin{align*}
\lim\limits_{r\rightarrow 0}r^{-m}\int\limits_{B_{r}(0)} u^2 dy = 0.
\end{align*}
\end{defi}

Recently, the problem of weak and strong unique continuation for the fractional Laplacian has received a certain amount of attention: The \emph{weak} unique continuation case is treated by Seo \cite{Seo}, \cite{Seo1}. In \cite{Seo} Seo argues via expansions of the corresponding Green's function kernels and Carleman-like estimates under the assumptions of $u\in L^1$, $Vu\in L^1$ and $n-1\leq 2s \leq n$ while he  employs techniques related to the article of \cite{JK} which allow to extend the weak unique continuation statement to scaling-critical spaces in \cite{Seo1}.\\  
The furthest result concerning \emph{strong} unique continuation for certain scaling-critical Hardy potentials with properly signed or sufficiently small multiplicative prefactors is treated by Fall and Felli \cite{FF} via frequency function methods. A crucial point in their strategy is to localize the problem with the aid of the Caffarelli-Silvestre extension which allows to construct an appropriate notion of frequency function.\\

In the present work, we approach the problem via Carleman inequalities. We argue in two main steps:
\begin{itemize}
\item \emph{Carleman Estimates.} This part constitutes the key estimate: We prove a Carleman inequality at the boundary of the upper half-plane. 
Our strategy of proving the decisive Carleman estimate relies on methods of Koch and Tataru \cite{KT1}. We separate the conjugated operator into a radial and a spherical part. Then, we decompose the spherical operator into its eigenspaces. Thus, the necessary estimate is reduced to a bound on (the kernel of) an ordinary differential operator. This procedure allows to handle very rough potentials for $s\geq \frac{1}{4}$ (including scale-invariant ones for $s\geq \frac{1}{2}$ and subcritical ones for $s\geq \frac{1}{4}$). 
If $s\in(0,\frac{1}{4})$ (or if $s\in [\frac{1}{4}, \frac{1}{2})$ and $V$ includes scaling-invariant potentials), we argue with the help of a slightly modified Carleman estimate which allows to exploit the differentiability assumptions in order to deduce the unique continuation property (c.f. Section \ref{sec:cor}). \\
In the case of a sufficiently strong spectral gap, e.g. as in the one-dimensional situation, the unique continuation property can be deduced for any potential which is bounded by a Hardy type potential, $|V(y)|\leq c|y|^{-2s}$ (c.f. Section \ref{sec:1D}) if $s>\frac{1}{2}$ (for $s= \frac{1}{2}$ an additional smallness condition has to be satisfied: $0<c\ll1$).
\item \emph{Blow-up Procedure.} With the previously discussed preparation, it becomes possible to conclude that if the Caffarelli extension vanishes of infinite order in the tangential and normal directions (with respect to the boundary), it must already vanish identically. Hence, we can concentrate on extensions which only vanish of finite order in the normal direction. For these we consider a blow-up procedure which reduces the problem to the weak unique continuation property. 
\end{itemize}

Our approach does not only complement the previous literature by relying on Carleman instead of frequency function methods. It also improves a number of results including the following three main aspects:
\begin{itemize}
\item In the case of the \emph{one-dimensional} situation and $s\geq \frac{1}{2}$, it is possible to treat arbitrary potentials which are bounded by scaling-critical Hardy type potentials (with a smallness condition for $s=\frac{1}{2}$). 
This is a consequence of the explicit estimates on the spectral gap of the extension operator (c.f. Section \ref{sec:1D}).
\item We allow for \emph{arbitrarily large scaling-critical potentials}. Furthermore, our subcritical potentials need not be differentiable. 
In the frequency function framework a regularity restriction was needed in order to deduce Pohozaev identities.
\item Our approach allows for generalizations to \emph{variable coefficient problems}. In this sense we can treat ``variable coefficient" fractional Laplacian operators, c.f. Section \ref{sec:vc}.
\end{itemize}

If $s\geq \frac{1}{4}$, our main results are derived as consequences  of the following Carleman estimate:

\begin{prop}[Symmetric Carleman Estimate] 
\label{prop:KT1b}
Let $s\in[\frac{1}{4},1)$ and let $$\phi(y) = - \ln(|y|) + \frac{1}{10}\left(\ln(|y|)\arctan(\ln(|y|))- \frac{1}{2}\ln(1+\ln(|y|)^2) \right).$$ Consider $w\in H^{1}(y_{n+1}^{1-2s}dy,\R^{n+1}_+)$ with 
\begin{align*}
\nabla \cdot y_{n+1}^{1-2s} \nabla  w & = f \mbox{ in } \R^{n+1}_{+}, \\
\lim\limits_{y_{n+1}\rightarrow 0} y^{1-2s}_{n+1} \p_{n+1} w &= h \mbox{ on } \R^n.
\end{align*}
Then for $\tau \geq \tau_0 > 0$ we have
\begin{equation}
\label{eq:Carlflsymb}
\begin{split}
& \left\| e^{\tau \phi}\left(1+\ln(|y|)^2\right)^{-\frac{1}{2}}y_{n+1}^{\frac{1-2s}{2}} \nabla w \right\|_{L^2(\R^{n+1}_{+} )}  \\ 
&+ \tau \left\| e^{\tau \phi}\left(1+\ln(|y|)^2\right)^{-\frac{1}{2}}y_{n+1}^{\frac{1-2s}{2}}|y|^{-1} w \right\|_{L^{2}(\R^{n+1}_{+})}\\
&+ \tau^{s} \left\| e^{\tau \phi}\left(1+\ln(|y|)^2\right)^{-\frac{1}{2}}|y|^{-s} w \right\|_{L^2(\R^{n})}\\
\lesssim & \ \tau^{- \frac{1}{2}} \left\|e^{\tau \phi} |y| y_{n+1}^{\frac{2s-1}{2}} f\right\|_{L^2(\R^{n+1}_{+})} 
\ + \tau^{\frac{1-2s}{2}}  \left\| e^{\tau \phi} |y|^{s} h \right\|_{L^{2}(\R^{n})}.
\end{split}
\end{equation}
\end{prop}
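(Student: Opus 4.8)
Let me think about this carefully. We have a Carleman estimate for the degenerate elliptic operator $\nabla \cdot y_{n+1}^{1-2s}\nabla$ on the upper half-space, with a Neumann-type boundary condition. The weight is $\phi(y) = -\ln|y| + \frac{1}{10}(\ln|y|\arctan(\ln|y|) - \frac{1}{2}\ln(1+\ln|y|^2))$.

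Key observations:
- The weight $\phi$ is essentially $-\ln|y|$ (which is the natural Carleman weight for homogeneous operators) plus a small convexifying perturbation. Note $\phi'(|y|)$ in terms of $t = \ln|y|$: $\frac{d\phi}{dt} = -1 + \frac{1}{10}\arctan(t)$ (since $\frac{d}{dt}[t\arctan t - \frac12\ln(1+t^2)] = \arctan t + \frac{t}{1+t^2} - \frac{t}{1+t^2} = \arctan t$). So $\frac{d\phi}{dt} = -1 + \frac{1}{10}\arctan(\ln|y|)$, which ranges in $(-1-\frac{\pi}{20}, -1+\frac{\pi}{20})$, always negative, close to $-1$. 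The second derivative $\frac{d^2\phi}{dt^2} = \frac{1}{10}\frac{1}{1+t^2} = \frac{1}{10(1+\ln|y|^2)} > 0$. So the weight is "log-convex" with a convexity gain of order $(1+\ln|y|^2)^{-1}$—exactly the weight factor appearing on the LHS of the estimate.

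This is the classic Koch–Tataru strategy. Let me sketch.

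**Approach: conjugation, polar/Fermi coordinates, spherical decomposition, ODE estimate.**

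Step 1: Change to appropriate coordinates. Use "polar-type" coordinates on $\R^{n+1}_+$: write $y = (x, y_{n+1})$ with $x \in \R^n$, and pass to coordinates $(r, \theta)$ where $r = |y| = \sqrt{|x|^2 + y_{n+1}^2}$ and $\theta \in S^n_+$ (upper half-sphere), then introduce the logarithmic variable $t = \ln r = -\ln(1/r)$ (or rather one should send $r \to 0$, so $t \to -\infty$). The operator $\nabla \cdot y_{n+1}^{1-2s}\nabla$ becomes, after multiplying by $r^2$ and absorbing the weight $y_{n+1}^{1-2s} = (r\theta_{n+1})^{1-2s}$, a sum of a radial part $\partial_t^2 + (\text{lower order})$ and a spherical part: the Laplace–Beltrami-type operator on $S^n_+$ with weight $\theta_{n+1}^{1-2s}$, which is the operator $\nabla_{S^n_+} \cdot \theta_{n+1}^{1-2s}\nabla_{S^n_+}$. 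The boundary term $\lim y_{n+1}^{1-2s}\partial_{n+1}w$ becomes a boundary term on $\partial S^n_+ = S^{n-1}$.

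Step 2: Conjugate. Set $v = e^{\tau\phi}w$, so $w = e^{-\tau\phi}v$. Compute the conjugated operator $L_\tau = e^{\tau\phi}(\text{operator})e^{-\tau\phi}$. Because $\phi$ depends only on $r$ (equivalently on $t$), the conjugation only affects the radial part. Writing in the $t$ variable with $a(t) := \tau\phi = \tau(-t + \frac{1}{10}(t\arctan t - \frac12\ln(1+t^2)))$ so that $a'(t) = \tau(-1 + \frac{1}{10}\arctan t)$ and $a''(t) = \frac{\tau}{10(1+t^2)}$, the conjugated radial operator is roughly $(\partial_t - a')( \partial_t + a') + (\text{const from the weight shift}) = \partial_t^2 - (a')^2 - a'' + \ldots$. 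Also there is a first-order drift term coming from $y_{n+1}^{1-2s}$ in polar coordinates (a term like $c_s \partial_t$ with $c_s$ related to $n+1-2s$), which combines with $a'$.

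Step 3: Decompose into spherical eigenspaces. The weighted spherical operator $-\nabla_{S^n_+}\cdot\theta_{n+1}^{1-2s}\nabla_{S^n_+}$ (with the natural Neumann/weighted boundary condition at the equator) is self-adjoint and nonnegative in $L^2(\theta_{n+1}^{1-2s}d\theta)$ with discrete spectrum $\{\mu_k\}_{k\ge 0}$, $\mu_0 = 0$ (constants). Expand $v(t,\theta) = \sum_k v_k(t)\varphi_k(\theta)$. Then the Carleman estimate decouples into a family of one-dimensional estimates for each $v_k$, and one must prove, uniformly in $k$ and $\tau$, a bound of the form: for the ODE operator
$$L_{\tau,k} v_k = v_k'' + (\text{drift})v_k' - ((a')^2 + a'' + \mu_k + c)v_k = g_k$$
on $t \in \R$ (after extending by zero / handling the boundary), with a boundary term at $t$-values corresponding to the equator encoded via $h$, one has
$$\|(\ldots)v_k'\| + \tau\|(\ldots)v_k\| + (\text{bdry norm in } \tau^s) \lesssim \tau^{-1/2}\|(\ldots)g_k\| + \tau^{(1-2s)/2}\|(\ldots)h_k\|.$$
The crucial quantity is the "symbol" $q(t) := (a'(t))^2 + \mu_k$. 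Since $a'(t) \approx -\tau$, we have $|a'| \sim \tau$, so $q$ is bounded below by $\sim\tau^2$ when $\mu_k$ is not too large, and the ODE is non-degenerate; when $\mu_k$ is large, $q \ge \mu_k$ dominates. In either regime the inverse of $L_{\tau,k}$ gains $\tau^{-1}$ in the $L^2\to L^2$ bound for the zeroth-order term. The convexity term $a'' \sim \tau/(1+t^2) > 0$ provides the extra $(1+t^2)^{-1/2}$-weighted gain via a standard commutator/integration-by-parts (the "Koch–Tataru trick" of using a log-type weight to get a convexity gain without losing a whole power of $\tau$).

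Step 4: Handle the boundary term. The boundary condition $\lim_{y_{n+1}\to 0}y_{n+1}^{1-2s}\partial_{n+1}w = h$ translates, in the spherical picture, to a weighted Neumann condition at the equator $\partial S^n_+$. In each eigenmode this contributes a boundary term when integrating by parts against $\bar v_k$; the factor $\tau^s$ (resp. $\tau^{(1-2s)/2}$) arises from the trace interpolation inequality $\|v_k(\cdot, \text{equator})\|^2 \lesssim \tau^{-(1-2s)}\|\ldots\|^2 + \ldots$ adapted to the weight $\theta_{n+1}^{1-2s}$—this is where the fractional exponents $s$, $1-2s$ enter. One should carefully check the sign of the boundary term (it must have a favorable sign or be absorbable) using that $a' < 0$ throughout.

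Step 5: Reassemble. Sum the squared ODE estimates over $k$ (Plancherel on the sphere), integrate over $t$, and change variables back to $y$. The weights $r^{-1} = |y|^{-1}$ (bulk) and $|y|^{-s}$ (boundary) appear from undoing the polar-coordinate Jacobians and the $r^2$-multiplication; the $(1+\ln|y|^2)^{-1/2}$ is the convexity weight; the factors $|y|$ on $f$ and $|y|^s$ on $h$ and the powers of $y_{n+1}$ come from tracking the $r^2$ normalization and the weight $y_{n+1}^{1-2s}$. Care with the density of $H^1$ functions and with the fact that $w$ is only assumed in $H^1(y_{n+1}^{1-2s}dy)$ (so one needs a cutoff/approximation argument near $r=0$ and $r=\infty$, which is legitimate precisely because we have not assumed compact support—this must be addressed, perhaps via the finiteness of the RHS).

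**Main obstacle.** The hardest part is the uniform-in-$k$-and-$\tau$ one-dimensional estimate in Step 3, specifically getting the *sharp* powers of $\tau$ simultaneously on all three LHS terms (gradient: $\tau^0$; bulk potential: $\tau^1$; boundary: $\tau^s$) while only paying $\tau^{-1/2}$ on the bulk RHS and $\tau^{(1-2s)/2}$ on the boundary RHS. This requires a delicate analysis of the ODE symbol $\lambda \mapsto \lambda^2 + a'(t)^2\cdots$ in the "transition regime" where $\mu_k \sim \tau^2$, since there the inhomogeneous ODE $v'' - (a'^2 + \mu_k + a'')v = g$ can have solutions that nearly resonate; the convexifying term $a''$ and the specific choice of the log-arctan weight are exactly what rules out the resonance and yields the stated weighted bound. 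I expect the bulk of the real work (and the reason for the particular form of $\phi$) to be exactly this ODE lemma, which presumably is proven by a positive-commutator / energy argument: testing the conjugated equation against something like $v_k'$ or $a' v_k$ (or a combination), using $a' < 0$ and $a'' > 0$, and carefully tracking which terms absorb which. Secondary subtleties: (i) the Hardy-type inequality needed to include the $\tau\||y|^{-1}w\|$ term on the LHS, which interacts with the lowest eigenvalue $\mu_0 = 0$—for $k=0$ (radial modes) the $|y|^{-1}w$ control must come purely from $a'^2 \sim \tau^2$, which is fine; (ii) correctly identifying the effective "mass" shift from rewriting $\nabla\cdot y_{n+1}^{1-2s}\nabla$ in $(t,\theta)$ coordinates, i.e. the $-(n-1+1-2s)^2/4$-type constant from symmetrizing the radial first-order term, and checking it doesn't destroy positivity (this is where $s \ge \frac14$ presumably matters, ensuring the relevant constant stays on the good side).
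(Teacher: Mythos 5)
Your plan follows essentially the same route as the paper's proof: conformal coordinates $r=e^t$ with conjugation by $e^{-\frac{n-2s}{2}t}$, symmetrization via the substitution $v=\theta_n^{\frac{1-2s}{2}}u$, decomposition into eigenfunctions of the weighted spherical operator, a positive-commutator estimate for the resulting one-dimensional conjugated operator split into the critical regime ($\lambda\sim\tau$, where the convexity $\varphi''\sim\tau(1+t^2)^{-1}$ supplies the logarithmically weighted gain) and the elliptic regimes, and the Herbst/Hardy-trace interpolation inequality to produce the $\tau^{s}$ and $\tau^{\frac{1-2s}{2}}$ boundary exponents. The one minor inaccuracy is your guess about where $s\geq\tfrac14$ enters: it is not needed for positivity of the mass constant in the ODE analysis, but rather ensures $\tau^{\frac{1-2s}{2}}\leq\tau^{s}$ so that the boundary potential term can later be absorbed.
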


We remark that in the case of the half-Laplacian our results can be sharpened by using the framework established by Koch and Tataru \cite{KT1} dealing with equations of the following form:
\begin{align}
\label{eq:KT1}
\p_{i} g^{ij} \p_{j} u = V u + W_{1} \nabla u + \nabla W_{2} u,
\end{align}
where $V\in c_{0}(L^{\frac{n}{2}})$, $W_{1}, W_{2}\in l^{1}_{w}(L^{n})$ (the function spaces are built by a dyadic summation over annuli) and where $g^{ij}$ are Lipschitz perturbations of the Laplacian. 
Our problem can be phrased in a similar strong unique continuation framework for (degenerate) elliptic operators by considering the evenly reflected Caffarelli extension. In this case we obtain an equation of the form (\ref{eq:KT1}) where $g^{ij} = |y_{n+1}|^{1-2s} id$ is now degenerate (unless $s= \frac{1}{2}$), $V = 0$ and $W_{1} = (0,...,0,H(y_{n+1}))W(y')$, $W_{2} = H(y_{n+1})W(y')$ are Heaviside functions at the boundary. Hence, in the case of the half-Laplacian, the strong unique continuation problem can directly be treated with the methods of Koch and Tataru if $V\in l_{w}^1(L^{n+1})$. Via an improved extension, c.f. Section {\ref{sec:Half}}, we show that this still remains true for $V\in L^{n+\epsilon}$. For the general fractional Laplacian it appears to be more difficult to reduce the integrability requirements on the potentials via similar means, since the symmetric operator in the Carleman estimates does not yield sufficiently strong positivity anymore. \\

Last but not least, we would like to stress that our strategy does not only apply to the fractional Laplacian but also works for a much larger class of operators. For any boundary value problem such that the underlying operator 
\begin{itemize}
\item allows for a sufficiently strong Carleman inequality at the boundary,
\item allows for sufficiently strong boundary estimates,
\end{itemize}
our strategy can be used to derive the strong unique continuation property.\\

Let us finally comment on the structure of the remaining article. In the following Section we recall the Caffarelli-Silvestre extension and present an argument for the weak unique continuation principle for the (localized version of the) fractional Laplacian. In Section \ref{sec:subsymm} we prove the main Carleman estimate, Proposition \ref{prop:KT1b}. This is then used to deduce doubling estimates and the strong unique continuation statement in Section \ref{sec:dewucp}. In Sections \ref{sec:1D}, \ref{sec:Half} we focus on refinements of our statement in the one-dimensional setting and in situations involving the half-Laplacian. Last but not least, Section \ref{sec:vc} deals with generalizations to ``variable coefficient" Laplacian operators and to operators on perturbations of flat domains.

\section{Setting and Weak Unique Continuation Principle}
In the sequel we recall the definition of the fractional Laplacian via its Caffarelli-Silvestre extension. We indicate how this can be used to prove the weak unique continuation principle. 

\subsection{The Caffarelli-Silvestre Extension}
The fractional Laplacian can be defined via several methods including its Fourier representation, its principal value integral or via extension. In the sequel we will mainly rely on the extension property which we briefly recall.\\

In their celebrated paper, \cite{CaS}, Caffarelli and Silvestre extend the local interpretation  of the half-Laplacian to the whole range of fractional Laplacian operators. They point out that the solutions of
$(-\D)^s u$ for $u\in H^{s}(\R^n)$ can be interpreted as the $H^{-s}(\R^n)$ limit of a generalized Dirichlet-to-Neumann map associated with the (weak) ``harmonic" extension of $u$. More precisely, let $u\in H^{s}(\mathbb{R}^n)$ and consider the extension problem:
\begin{align*}
\nabla \cdot y_{n+1}^{1-2s} \nabla \tilde{u} &= 0 \mbox{ in } \R^{n+1}_{+},\\
\tilde{u} & = u \mbox{ on } \R^n.
\end{align*}
Then $(-\D)^s u=-c_s \lim\limits_{y_{n+1}\rightarrow 0} y_{n+1}^{1-2s}\p_{n+1} \tilde{u}$, where $c_s$ is a constant which only depends on the parameter $s$. With a slight abuse of notation, we will omit the constant in the sequel.

As Fall and Felli \cite{FF}, we will work in this setting as it allows to use local arguments. 

\subsection{The Weak Unique Continuation Property}

\label{sec:weak}

As a first step towards the strong unique continuation result for the fractional operator, we recall the weak unique continuation property for the fractional Laplacian. 

\begin{prop}[Weak Unique Continuation]
\label{lem:wuc}
Let $s\in(0,1)$ and let $u:\R^n\rightarrow \R$, $u\in H^{s}(\R^n)$, solve
\begin{equation*}
\begin{split}
\Ds u &= Vu \mbox{ on } \R^n, \\
u&=0 \mbox{ on } \R^n\cap B_{1}(0).
\end{split}
\end{equation*}
Then $u\equiv 0$.
\end{prop}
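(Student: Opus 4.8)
The plan is to pass to the Caffarelli--Silvestre extension and reduce the statement to a unique continuation property across the hyperplane $\{y_{n+1}=0\}$ for the degenerate elliptic equation $\nabla\cdot y_{n+1}^{1-2s}\nabla\tilde u=0$. Concretely, let $\tilde u\in H^1(y_{n+1}^{1-2s}dy,\R^{n+1}_+)$ be the extension of $u$; then $\tilde u$ solves the degenerate harmonic equation in the open half-space and, by hypothesis, its Dirichlet trace vanishes on $B_1(0)\cap\R^n$ while its weighted Neumann trace $\lim_{y_{n+1}\to 0}y_{n+1}^{1-2s}\p_{n+1}\tilde u = Vu$ also vanishes on $B_1(0)\cap\R^n$ (since $u\equiv 0$ there). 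The key point is therefore that $\tilde u$ has \emph{both} Cauchy data vanishing on an open piece of the boundary. I would then invoke (or reprove) the strong unique continuation / Cauchy-uniqueness statement for the operator $\nabla\cdot y_{n+1}^{1-2s}\nabla$, which is known to hold for such Muckenhoupt-weighted degenerate elliptic equations (this is precisely the degenerate elliptic unique continuation of Caffarelli--Silvestre / Fall--Felli type). It yields $\tilde u\equiv 0$ in a neighborhood of $B_1(0)$ inside $\R^{n+1}_+$, and then, by the unique continuation for this equation inside the open half-space (which is uniformly elliptic away from $\{y_{n+1}=0\}$, so classical Aronszajn-type results apply), $\tilde u\equiv 0$ in all of $\R^{n+1}_+$. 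Taking the trace gives $u\equiv 0$ on $\R^n$.

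The cleanest way to organize the boundary step is via an \emph{even reflection}: extend $\tilde u$ to $\R^{n+1}$ by $\tilde u(y',-y_{n+1}) = \tilde u(y',y_{n+1})$ and note that, because both the Dirichlet trace and the weighted conormal derivative vanish on $B_1(0)\cap\R^n$, the reflected function is a weak solution of $\nabla\cdot |y_{n+1}|^{1-2s}\nabla \tilde u = 0$ in a full two-sided neighborhood of any point of $B_1(0)\cap\R^n$ (no singular boundary measure is produced). Since the reflected $\tilde u$ vanishes on the open set $B_1(0)\cap\R^n\subset\R^{n+1}$ (a set of positive $|y_{n+1}|^{1-2s}dy$-measure, in fact on an open half-ball), one concludes by the weak unique continuation property for this weighted (Muckenhoupt $A_2$) degenerate elliptic operator — available from the work of Caffarelli--Silvestre and the quantitative Almgren frequency / Carleman estimates for degenerate operators — that $\tilde u\equiv 0$ in a connected neighborhood, hence, propagating, everywhere in $\R^{n+1}$. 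Restricting to $y_{n+1}=0$ then gives $u\equiv 0$.

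I expect the main obstacle to be a bookkeeping/regularity issue rather than a conceptual one: one must make sure that $u\in H^s$ with $Vu$ only in some negative Sobolev or local $L^p$ class still produces an extension $\tilde u$ regular enough (e.g. $W^{1,2}_{loc}$ in the weighted sense up to the boundary, with a well-defined conormal trace) that the even-reflected function is genuinely a distributional solution across $\{y_{n+1}=0\}$, with \emph{no} concentrated measure on the hyperplane. This is where the precise formulation of the weak Dirichlet and Neumann traces matters, and where one has to be slightly careful that the condition $u=0$ on $B_1(0)\cap\R^n$ indeed kills the boundary term in the weak formulation. Once the reflected equation is set up correctly, the unique continuation itself is standard for the degenerate elliptic operator, so the remaining work is only to quote the appropriate weighted unique continuation theorem and to run the connectedness/propagation argument to reach all of $\R^n$.
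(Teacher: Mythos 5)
Your reduction to the Caffarelli--Silvestre extension with both Cauchy data vanishing on $B_1(0)\cap\{y_{n+1}=0\}$ is exactly the paper's starting point, but the core of your argument has a gap. First, the even reflection does \emph{not} make $\tilde u$ vanish on an open subset of $\R^{n+1}$: the reflected function is a solution of $\nabla\cdot|y_{n+1}|^{1-2s}\nabla\tilde u=0$ across the hyperplane, but a priori it vanishes only on the $n$-dimensional set $B_1(0)\cap\{y_{n+1}=0\}$, which has zero weighted measure; your parenthetical claim that it vanishes ``on an open half-ball'' is false for the even reflection (for $s=\frac{1}{2}$ the function $y_{n+1}^2$-type counterexamples make this clear). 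The construction that does produce vanishing on an open set is the \emph{trivial (zero) extension} to $\{y_{n+1}<0\}$, which remains a distributional solution across $B_1(0)\cap\{y_{n+1}=0\}$ precisely because both Cauchy data vanish there; the paper's Remark following Proposition \ref{prop:wuc} uses exactly this device for $s=\frac{1}{2}$. Second, and more seriously, even with the correct extension the step you propose to ``invoke'' --- unique continuation from an open set for $\nabla\cdot|y_{n+1}|^{1-2s}\nabla$ in a neighborhood of a point \emph{on} the degenerate hyperplane --- is not covered by classical Aronszajn-type results, since the coefficients degenerate (or blow up) on $\{y_{n+1}=0\}$ and the operator is not uniformly elliptic there. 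That statement is essentially equivalent to the one being proved, so quoting ``Caffarelli--Silvestre / Fall--Felli type'' unique continuation at this point makes the argument circular exactly where the work is required.

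The paper closes this gap by a different mechanism: it proves the local statement (Proposition \ref{prop:wuc}) via a bootstrap on the equation. Using the even reflection, tangential smoothness, and the elementary ODE remark that $\lim_{y\searrow 0}y^{a}u'(y)=0$ together with $u(0)=0$ forces $\lim_{y\searrow 0}y^{a-1}u(y)=0$, one differentiates the equation repeatedly in the normal direction to conclude $\lim_{y_{n+1}\searrow 0}y_{n+1}^{-m}\tilde u=0$ for every $m$, i.e.\ infinite-order vanishing of the extension in the normal direction. Combined with the (trivial) infinite-order tangential vanishing coming from $u\equiv 0$ on $B_1(0)$, the boundary Carleman estimate of Proposition \ref{prop:KT1b} (in the form of Corollary \ref{cor:schp}) then yields $\tilde u\equiv 0$ near the origin, and interior unique continuation for the (there uniformly elliptic, real-analytic) operator propagates the vanishing to all of $\R^{n+1}_+$, whence $u\equiv 0$. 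If you wish to keep your route, you must either prove the weighted open-set unique continuation at the degenerate hyperplane yourself (e.g.\ by an Almgren frequency or Carleman argument adapted to the $A_2$ weight) or replace that step by the bootstrap-plus-Carleman argument above.
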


Although this property follows from the work of Fall and Felli, c.f. \cite{FF}, by considering the case $V=0$, we provide an argument for it and strengthen the result to a local statement on the Caffarelli-Silvestre extension. More precisely, we show:

\begin{prop}
\label{prop:wuc}
Let $s\in(0,1)$ and let $\tilde{u}\in H^{1}_{loc}(y_{n+1}^{1-2s}dy,B_{1}^+(0))\cap L^{\infty}(B_{1}^+(0))$ solve
\begin{equation}
\begin{split}
\label{eq:CF}
\nabla \cdot y_{n+1}^{1-2s} \nabla \tilde{u} & = 0 \mbox{ in } B_{1}^+(0),\\
\lim\limits_{y_{n+1}\rightarrow 0} y_{n+1}^{1-2s} \p_{n+1} \tilde{u} & = 0 \mbox{ on } B_{1}^+(0)\cap \{y_{n+1}=0\}.
\end{split}
\end{equation}
Further, assume that $\tilde{u}(y',0)=0 \mbox{ on } B_{1}^+(0)\cap \{y_{n+1}=0\}$. Then $\tilde{u}\equiv 0$ in $B_{1}^+(0)$.
\end{prop}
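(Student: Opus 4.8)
The plan is to prove unique continuation across the degenerate boundary by a combination of even reflection and a unique continuation result for the reflected (degenerate) elliptic equation, supplemented by a Cauchy-data/Caccioppoli argument that converts the two homogeneous boundary conditions into honest $C^1$ matching. First I would set up the even reflection: since $\tilde u$ satisfies $\lim_{y_{n+1}\to 0} y_{n+1}^{1-2s}\p_{n+1}\tilde u = 0$ on the flat boundary, the function $\bar u(y',y_{n+1}) := \tilde u(y', |y_{n+1}|)$ belongs to $H^1_{loc}(|y_{n+1}|^{1-2s}dy, B_1(0))$ and solves $\nabla\cdot |y_{n+1}|^{1-2s}\nabla \bar u = 0$ weakly in the full ball $B_1(0)$ (this is the standard reflection of Caffarelli--Silvestre solutions with vanishing Neumann-type data; one checks it by testing against functions in $H^1(|y_{n+1}|^{1-2s}dy)$ and using that the conormal flux vanishes). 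The extra hypothesis $\tilde u(y',0)=0$ then says precisely that $\bar u$ vanishes on the $n$-dimensional hyperplane $\{y_{n+1}=0\}$, which is a set of positive $A_2$-capacity for the weight $|y_{n+1}|^{1-2s}$, so it is not negligible.

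The heart of the matter is then: a solution $\bar u$ of $\nabla\cdot |y_{n+1}|^{1-2s}\nabla \bar u = 0$ in $B_1(0)$ that vanishes on $\{y_{n+1}=0\}\cap B_1$ must vanish identically. I would establish this through the Almgren-type frequency function associated with the weight $|y_{n+1}|^{1-2s}$: define $H(r) = \int_{\p B_r} |y_{n+1}|^{1-2s}\bar u^2$, $D(r) = \int_{B_r}|y_{n+1}|^{1-2s}|\nabla\bar u|^2$, and $N(r) = rD(r)/H(r)$. One shows $N$ is monotone nondecreasing in $r$ (using the Rellich--Pohozaev identity for this operator, which holds since the weight is a power of a single coordinate and hence the vector field $y\cdot\nabla$ interacts cleanly with it), and then uses the hypothesis $\bar u|_{\{y_{n+1}=0\}}=0$ together with the structure of blow-up limits: the rescaled functions $\bar u_r(y) = \bar u(ry)/\big(r^{-n}H(r)\big)^{1/2}$ converge (along subsequences) to a homogeneous global solution $\bar u_0$ of degree $N(0^+)$, which still vanishes on $\{y_{n+1}=0\}$. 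A classification of homogeneous solutions of the degenerate equation vanishing on the hyperplane — together with the fact that vanishing of infinite order forces $N(0^+)=\infty$ — yields the contradiction unless $\bar u\equiv 0$ near the origin. Since $\tilde u$ is real-analytic away from $\{y_{n+1}=0\}$ (the operator is uniformly elliptic with analytic coefficients in $\R^{n+1}_+$), vanishing near the origin propagates to all of $B_1^+(0)$ by the classical weak unique continuation for analytic-coefficient elliptic equations.

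I expect the main obstacle to be the classification step: showing that a homogeneous (of some positive, possibly fractional, degree $\kappa$) solution of $\nabla\cdot|y_{n+1}|^{1-2s}\nabla v = 0$ on $\R^{n+1}$ that vanishes identically on $\{y_{n+1}=0\}$ must be zero. Separating variables, writing $v = \rho^{\kappa}\psi(\theta)$ with $\rho=|y|$, reduces this to an eigenvalue problem on the half-sphere $S^n_+$ for the spherical part of the operator with the Dirichlet-type condition coming from $v|_{\{y_{n+1}=0\}}=0$; one must check that the corresponding homogeneity exponents $\kappa$ are quantized away from a neighbourhood of $0$ (in fact $\kappa\ge 2s$, reflecting that the first nontrivial odd extension has this rate) so that infinite-order vanishing is incompatible with $v\not\equiv 0$. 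An alternative, and probably cleaner, route avoiding any frequency function would be to reflect $\tilde u$ \emph{oddly} across $\{y_{n+1}=0\}$: since $\tilde u(y',0)=0$, the odd reflection $\hat u(y',y_{n+1}) = \sgn(y_{n+1})\tilde u(y',|y_{n+1}|)$ also has a chance of solving a related degenerate equation (now with weight $|y_{n+1}|^{1-2s}$ but dual exponent behaviour), and combining the even and odd reflections shows both the Dirichlet \emph{and} the Neumann data of $\tilde u$ on $\{y_{n+1}=0\}$ vanish; one then quotes a Carleman estimate at the boundary — for instance a boundary-adapted version of the Carleman inequality already in the paper with $\tau$ fixed but arbitrarily large, or the results of Koch--Tataru \cite{KT1} for the $s=1/2$ case — to conclude $\tilde u\equiv 0$ in a neighbourhood, and then propagate by interior weak unique continuation. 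I would present the reflection-plus-interior-analyticity argument as the main line, since it is the shortest and uses only standard facts about the Caffarelli--Silvestre extension.
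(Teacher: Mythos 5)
Your route (even reflection, Almgren frequency function, blow-up, classification of homogeneous solutions) is genuinely different from the paper's, which instead bootstraps regularity of the Neumann extension to show that $\tilde u$ vanishes to infinite order in the \emph{normal} direction at the origin and then invokes the Carleman estimate of Proposition \ref{prop:KT1b}; your route is essentially the Fall--Felli argument that the paper explicitly cites as already covering this statement. However, the linchpin of your argument --- the classification lemma, which you yourself flag as the main obstacle --- is both misstated and supported by the wrong mechanism. As you state it (``a homogeneous solution of $\nabla\cdot|y_{n+1}|^{1-2s}\nabla v=0$ on $\R^{n+1}$ that vanishes identically on $\{y_{n+1}=0\}$ must be zero'') the lemma is \emph{false}: $v(y)=\sgn(y_{n+1})|y_{n+1}|^{2s}$ is a nontrivial $2s$-homogeneous function in $H^{1}_{loc}(|y_{n+1}|^{1-2s}dy)$ whose conormal flux $|y_{n+1}|^{1-2s}\p_{n+1}v\equiv 2s$ is constant, so it is a weak solution of the reflected equation across the hyperplane and it vanishes identically on $\{y_{n+1}=0\}$. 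This is precisely the ``odd branch with rate $2s$'' you allude to, and it shows that reducing to the \emph{Dirichlet} eigenvalue problem on $S^n_+$, as you propose, cannot yield triviality. What saves the argument is that the blow-up limit inherits the evenness of $\bar u$ in $y_{n+1}$, i.e.\ it carries vanishing weighted Neumann data \emph{in addition to} vanishing trace; near the equator the indicial roots are $0$ and $2s$, one branch is killed by each condition, and only the overdetermined Cauchy data forces the spherical part to vanish. You never record the evenness of the blow-up limit, and without it the classification step collapses.

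The mechanism you give for the final contradiction is also off target. You invoke ``vanishing of infinite order forces $N(0^+)=\infty$'' and argue that quantization of the admissible homogeneities ($\kappa\ge 2s$) is ``incompatible with infinite-order vanishing.'' But this proposition is the \emph{weak} unique continuation statement: there is no infinite-order vanishing hypothesis, $N(0^+)$ is a priori finite, and quantization of exponents yields no contradiction at all. The contradiction must come from the normalization $\||y_{n+1}|^{(1-2s)/2}\bar u_r\|_{L^2(\p B_1)}=1$ surviving in a blow-up limit that the corrected (even, doubly overdetermined) classification forces to be zero. With these two repairs the frequency-function route does close, and the final propagation by interior analyticity is fine. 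For the record, your sketched alternative has the same gap in another guise: the Carleman weight of Proposition \ref{prop:KT1b} is singular at the origin, so applying it to a cut-off of $\tilde u$ requires controlling the inner cut-off errors via exactly the infinite-order normal vanishing that the paper's bootstrap establishes and that you skip.
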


In order to see this, we make use of the equation and regularity estimates for the Caffarelli-Silvestre extension. These ingredients can be combined in a boot strap argument.

\begin{proof}
We first point out the following two facts:
\begin{itemize}
\item[1.] The regularity theory for $H^{1}_{loc}(y_{n+1}^{1-2s}dy,B_{1}^+(0))\cap L^{\infty}(B_{1}^+(0))$ weak solutions of
\begin{equation}
\begin{split}
\label{eq:CF1}
\nabla \cdot y_{n+1}^{1-2s} \nabla \tilde{u} & = 0 \mbox{ in } B_{1}^+(0),\\
\lim\limits_{y_{n+1} \searrow 0} y_{n+1}^{1-2s} \p_{n+1}\tilde{u} & = f \mbox{ on } B_{1}^+(0)\cap \{ y_{n+1}=0 \},
\end{split}
\end{equation}
implies that if $f\in C^{0,\alpha}(B_{1}(0)\cap \{y_{n+1}=0\})$, then $y_{n+1}^{1-2s} \p_{n+1} \tilde{u} \in C^{0,\beta}(\overline{B_{\frac{3}{4}}^+(0)})$ and 
\begin{align*} 
&\left\| y_{n+1}^{1-2s}\p_{n+1}\tilde{u} \right\|_{C^{0,\beta}(\overline{B_{\frac{3}{4}}^+(0)})} \leq C_{1},
\end{align*}
with $C_{1}=C_{1}(s,n,\left\| f\right\|_{L^{\infty}(B_{1}\cap \{y_{n+1}=0\})}, \left\| f \right\|_{C^{0,\alpha}(B_{1}(0)\cap \{y_{n+1}=0\})})$. This follows, for example, from the article by Cabr\'e and Sire \cite{CSi}.
\item[2.] For $a\in(-\infty,1)$ the mean value theorem and the fundamental theorem of calculus imply that for $u\in C^{1}((0,1))\cap C^{0}([0,1))$ the assumptions
\begin{align*}
u(0)=0 \mbox{ and } \lim\limits_{y \searrow 0} y^a u'(y) = 0,
\end{align*}
result in $\lim\limits_{y \searrow 0} y^{a-1}u (y)=0$.
\end{itemize}

Step 1: Beginning of the Iteration.
We make use of the equation: For this we note that the boundary conditions in (\ref{eq:CF}) allow to carry out an even reflection and interpret the solution as a $H^{1}_{loc}(|y_{n+1}|^{1-2s}dy, B_{1}(0))\cap L^{\infty}(B_{1}(0))$ solution of 
\begin{equation}
\begin{split}
\label{eq:CFt}
\nabla \cdot |y_{n+1}|^{1-2s} \nabla \tilde{u} & = 0 \mbox{ in } B_{1}(0).
\end{split}
\end{equation}
For some $\alpha \in (0,1)$ it is $C^{0,\alpha}$-regular (in any direction) and $C^{\infty}$-smooth in the tangential directions \cite{CaS} (quantitative estimates follow, for example, by carrying out a tangential Fourier transform and treating the remaining equation as an ODE in the normal variable). Thus, it is possible to differentiate (\ref{eq:CFt}) with respect to the tangential directions up to an arbitrary order. Using the continuity of, for instance, $ |y_{n+1}|^{1-2s} \p_{n+1} \D' \tilde{u}$ (in $B_{\frac{3}{4}}(0)$) and recalling the even reflection, we obtain
\begin{equation}
\label{eq:tang}
\lim\limits_{y_{n+1}\searrow 0} y_{n+1}^{1-2s} \p_{n+1} \D' \tilde{u}=0.
\end{equation}
By the second preliminary remark from above, this leads to 
\begin{align}
\label{eq:tang1}
\lim\limits_{y_{n+1}\searrow 0} y_{n+1}^{-2s} \D' \tilde{u}=0 \mbox{ and } y_{n+1}^{-2s}\D' u \in C^{0,\gamma}(B_{\frac{3}{4}}(0)).
\end{align}
Hence, we can employ equation (\ref{eq:CF}) to deduce
\begin{align*}
\lim\limits_{y_{n+1}\searrow 0} \p_{n+1} y_{n+1}^{1-2s} \p_{n+1} \tilde{u} = - \lim\limits_{y_{n+1}\searrow 0} y_{n+1}^{1-2s}\D' \tilde{u} = 0.
\end{align*} 
For later use, we highlight that this implies 
$$\lim\limits_{y_{n+1}\searrow 0}y_{n+1}^{-2s}\p_{n+1}\tilde{u}=\lim\limits_{y_{n+1}\searrow 0}y_{n+1}^{-2s-1}\tilde{u}=0.$$

Step 2: Iteration.
With the previous considerations, it is possible to differentiate (\ref{eq:CF}) in the $y_{n+1}$-direction and consider a weak solution of
\begin{equation}
\begin{split}
\label{eq:CF2}
\D  (y_{n+1}^{1-2s} \p_{n+1} \tilde{u}) & = -(1-2s)y_{n+1}^{-2s}\D' \tilde{u} \mbox{ in } B_{\frac{3}{4}}^+(0),\\
\lim\limits_{y_{n+1}\searrow 0} \p_{n+1} (y_{n+1}^{1-2s} \p_{n+1} \tilde{u}) & = 0 \mbox{ on } B_{\frac{3}{4}}^+(0)\cap \{y_{n+1}=0\}.
\end{split}
\end{equation}
Using the observations (\ref{eq:tang}) and (\ref{eq:tang1}), this leads to
\begin{align*}
\lim\limits_{y_{n+1}\searrow 0} \p_{n+1}^2 y_{n+1}^{1-2s}\p_{n+1}\tilde{u} =& \ -(1-2s)\lim\limits_{y_{n+1}\searrow 0} y_{n+1}^{-2s}\p_{n+1} \tilde{u}\\
& \ - \lim\limits_{y_{n+1}\searrow 0} y_{n+1}^{1-2s}\p_{n+1}\D' \tilde{u} = 0.
\end{align*}
Therefore,
\begin{align*}
\lim\limits_{y_{n+1}\searrow 0} \p_{n+1}^2 y_{n+1}^{1-2s} \p_{n+1} \tilde{u} =\lim\limits_{y_{n+1}\searrow 0}y_{n+1}^{-2s-2} \tilde{u}=0.
\end{align*}
As before, we need to complement this by limiting behaviour of tangential derivatives in order to estimate the contributions in the new right hand sides of a differentiated version of (\ref{eq:CF2}). We obtain this by reflecting the function $w(y):=y_{n+1}^{1-2s}\p_{n+1}\tilde{u}$ evenly onto the whole unit ball. In analogy to the previous considerations from step 1, it solves an equation of the type (\ref{eq:CF2}) in the whole unit ball. We differentiate in the tangential directions. For instance, if we consider second tangential derivatives, this implies the continuity of $\p_{n+1} \D' w$, which then results in $\lim\limits_{y_{n+1}\searrow 0}\p_{n+1} \D' w=0$ (for this we also use higher order analogues of (\ref{eq:tang1}) which follow from taking higher order tangential derivatives in step 1). By virtue of the second remark from above and the definition of $w$, this implies 
\begin{align*}
\lim\limits_{y_{n+1}\searrow 0} \p_{n+1}y_{n+1}^{1-2s}\p_{n+1} \D' \tilde{u} = \lim\limits_{y_{n+1}\searrow 0}y_{n+1}^{-2s}\p_{n+1} \D' \tilde{u} = \lim\limits_{y_{n+1}\searrow 0} y_{n+1}^{-2s-1} \D' \tilde{u} =0.
\end{align*}
These terms, however, exactly form the right hand side contributions which result from differentiating (\ref{eq:CF2}) in the normal direction once more. Thus, a bootstrap argument is possible.\\

Step 3: Conclusion. Using the bootstrap procedure, we obtain
\begin{align*}
\lim\limits_{y_{n+1}\searrow 0} y_{n+1}^{-m} \tilde{u} = 0 
\end{align*}
for all $m\in \N$, i.e. $\tilde{u}$ vanishes of infinite order in the normal direction at $y=0$. Combined with the vanishing in the tangential direction and the Carleman inequality from Proposition \ref{prop:KT1b}, this yields $u\equiv 0$ in $B_{1}^+(0)$. 
\end{proof}

\begin{rmk}
If $s=\frac{1}{2}$, the statement of the proposition follows from the weak unique continuation property of the $(n+1)$-dimensional Laplacian. This can be seen by extending the Caffarelli-Silvestre extension, $\tilde{u}$, trivially in the negative $y_{n+1}$-direction. 
\end{rmk}

\section{Symmetric Carleman Estimates}
\label{sec:subsymm}

\subsection{Conformal Coordinates}
\label{sec:CarlII}
In order to prove the desired Carleman inequality, we carry out a change of coordinates similar as in \cite{KT1}.\\

Starting from polar coordinates, the degenerate elliptic operator $\nabla\cdot y_{n+1}^{1-2s} \nabla$ reads
\begin{align*}
\theta_{n}^{1-2s}\frac{1}{r^n}\drr(r^{n+1-2s}\drr) + r^{-1-2s}\nabla_{S^{n}}\cdot \theta_{n}^{1-2s}\nabla_{S^{n}} ,
\end{align*}
where $\theta_{n}= \frac{y_{n+1}}{|y|}=\sin(\va)$.
We transform into conformal coordinates, i.e. $r=e^{t}$, which yields $\drr = e^{-t}\dt$. This leads to 
\begin{align*}
e^{-(1+2s)t}\left[ \theta_{n}^{1-2s} \dt^2 + (n-2s)\theta_{n}^{1-2s}\dt + \nabla_{S^{n}}\cdot \theta_{n}^{1-2s}\nabla_{S^{n}} \right].
\end{align*}
Conjugating with $e^{-\frac{n-2s}{2}t}$ (which corresponds to setting $w=e^{- \frac{n-2s}{2}t}u$) and multiplying the operator with $e^{(1+2s)t}$, results in
\begin{align}
\label{eq:op1}
\theta_{n}^{1-2s}\left(\dt^2 - \frac{(n-2s)^2}{4}\right) + \nabla_{S^{n}}\cdot \theta_{n}^{1-2s} \nabla_{S^{n}}.
\end{align}
In the case of $s= \frac{1}{2}$ this corresponds to the situation in \cite{KT1}.\\

In the sequel we will be using several changes of coordinates. In order to avoid confusion, we clarify the conventions we will be adhering to:
\begin{rmk}[Notation]
\label{rmk:not}
In the proof of Proposition \ref{prop:KT1b} (and in the remaining text) 
\begin{itemize}
\item we use $w$ to denote the original function in Cartesian variables,
\item after a change to conformal coordinates $u$ is obtained from $w$ via $u(e^{t},\theta)= e^{\frac{n-2s}{2}t}w(e^{t},\theta)$,
\item $v$ is deduced from $u$ by multiplying with the normal variable: $v=\theta_{n}^{\frac{1-2s}{2}}u$.
\end{itemize}
\end{rmk}

\begin{proof}[Proof of Proposition \ref{prop:KT1b}]
\emph{Step 1: Change of coordinates.} 
We carry out a change of coordinates, as this simplifies the handling of the duality formulation of the equation: We set $v=\theta_{n}^{\frac{1-2s}{2}}u$ and multiply (\ref{eq:op1}) with $\theta_n^{\frac{2s-1}{2}}$ from the left. In this formulation the conjugated version of equation (\ref{eq:op1}) turns into
\begin{equation}
\label{eq:sym}
\begin{split}
e^{\va(t)}\left(\dt^2 - \frac{(n-2s)^2}{4} + \theta_n^{\frac{2s-1}{2}}\nabla_{S^n}\cdot \theta_{n}^{1-2s}\nabla_{S^n}\theta_n^{\frac{2s-1}{2}} \right) e^{-\va(t)} v &= \theta_n^{\frac{2s-1}{2}}f,\\
\lim\limits_{\theta_{n}\rightarrow 0} \theta_{n}^{1-2s} \nu \cdot \nabla_{S^n}\theta_n^{\frac{2s-1}{2}}v &= h,
\end{split}
\end{equation}
where $\va=\tau \phi$.
In the new coordinates the desired Carleman inequality (\ref{eq:Carlflsymb}) then reads
\begin{equation*}
\begin{split}
&\tau^{-\frac{1}{2}} \left\| (\va''(t))^{\frac{1}{2}}\theta_{n}^{\frac{1-2s}{2}} \nabla_{S^n} \theta_n^{\frac{2s-1}{2}}v \right\|_{L^2(\R \times S^{n}_{+})} + \tau^{-\frac{1}{2}} \left\| (\va''(t))^{\frac{1}{2}}\dt v \right\|_{L^2(\R \times S^{n}_{+})}\\
& + \tau^{\frac{1}{2}} \left\|  (\va''(t))^{\frac{1}{2}} v \right\|_{L^{2}(\R \times S^{n}_{+})} 
+ \tau^{\frac{2s-1}{2}} \left\| (\va''(t))^{\frac{1}{2}} \lim\limits_{\theta_{n\rightarrow 0}} \theta_n^{\frac{2s-1}{2}} v \right\|_{L^2(\R \times \partial S^{n}_{+})}\\
\lesssim & \ \tau^{- \frac{1}{2}} \left\|  \theta_n^{\frac{2s-1}{2}} f\right\|_{L^2(\R \times S^{n}_{+})} 
+ \tau^{\frac{1-2s}{2}}  \left\| h \right\|_{L^{2}(\R \times \partial S^{n}_{+})} \mbox{ for } \tau \geq \tau_0 >0.
\end{split}
\end{equation*}
 
We test equation (\ref{eq:sym}) with eigenfunctions of the spherical operator $$\theta_n^{\frac{2s-1}{2}}\nabla_{S^n}\cdot \theta_{n}^{1-2s}\nabla_{S^n}\theta_n^{\frac{2s-1}{2}}$$ with vanishing generalized Neumann data.
Then equation (\ref{eq:sym}) turns into 
\begin{equation*}
e^{\va(t)}\left(\dt^2 - \lambda^2  - \frac{(n-2s)^2}{4}  \right)e^{- \va(t)} \E v = \E \theta_n^{\frac{2s-1}{2}}f + \Et h,
\end{equation*}
where we denote the projection of a function $v$ onto the eigenvector $v_{\lambda}$ by $\E v$ and its weighted boundary projection by $\Et v$. With a slight abuse of notation we will also use the symbol $\Et v$ for the scalar $\int\limits_{\p S^n_+}v \lim\limits_{\theta_n\rightarrow 0}\theta_n^{\frac{1-2s}{2}}v_{\lambda}d\mathcal{H}^{n-1}$.\\
The existence of a countable, diverging sequence of eigenvalues for the spatial part of the operator (\ref{eq:sym}) follows from the compactness of its inverse operator in an appropriate function space which is defined in the next step of the proof.  \\

\emph{Step 2: An Adapted Space.}
We define the analogues of the spaces $\dot {H}^1$ with the aid of our equation. Instead of the space $\dot{H}^{1}$, we use the modified space $\dot{H}^{1}_{\theta}$:
\begin{align*}
\dot{H}^{1}_{\theta} := \left\{ v  \Big|  \ \left\| \theta_{n}^{\frac{1-2s}{2}} \nabla_{S^n} \theta_{n}^{\frac{2s-1}{2}} v\right\|_{L^2(\R \times S_{+}^{n})} +  \left\| \dt v \right\|_{L^2(\R \times S_{+}^{n})} < \infty  \right\},
\end{align*}
and its semi-norm
\begin{align*}
&\left\| v\right\|_{\dot{H}^{1}_{\theta}} = \left\| v\right\|_{\dot{H}^{1}_{\theta,1}} + \left\| v \right\|_{\dot{H}^{1}_{\theta,2}} \mbox{ with } \\
&\left\| v \right\|_{\dot{H}^{1}_{\theta,1}}=  \left\| \theta_{n}^{\frac{1-2s}{2}} \nabla_{S^n} \theta_{n}^{\frac{2s-1}{2}}    v\right\|_{L^2(\R \times S_{+}^{n})},  \ \ 
\left\| v\right\|_{\dot{H}^{1}_{\theta,2}} =  \left\| \dt v \right\|_{L^2(\R \times S_{+}^{n})}. 
\end{align*}
We remark that intersected with $L^2_{loc}(\R\times S^n_+)$ (and augmented by the right boundary values), this space constitutes the natural setting for the weak formulation of (\ref{eq:sym}). Due to the compactness of the embedding $H^{1}(\theta_n^{1-2s}, S^n_+) \hookrightarrow L^2(\theta_n^{1-2s},S^{n}_+)$, the solution operator associated with the vanishing Neumann version of the spherical operator contained in (\ref{eq:sym}) is compact if we additionally impose a mean value condition on the spaces (more precisely, the mean value property should be phrased as $\int\limits_{S^n_+}\theta_n^{\frac{2s-1}{2}}v d\theta=0$). As a result, its inverse has the claimed sequence of diverging eigenvalues.\\

\textit{Step 3: A trace estimate.}
A key tool in obtaining the desired Carleman estimates consists of using the right trace estimates. We use the following interpolation inequality:

\begin{lem}
Let $s\in(0,1)$ and let $u:S^{n}_+\rightarrow \R$ be measurable. Then,
\begin{equation}
\label{eq:trint}
\left\| u \right\|_{L^2(S^{n-1})} \lesssim \tau^{1-s}\left\| \theta_n^{\frac{1-2s}{2}} u \right\|_{L^2(S^n_+)} + \tau^{-s}\left\| \theta_n^{\frac{1-2s}{2}} \nabla_{S^n} u \right\|_{L^2(S^{n}_+)}
\end{equation}
for $\tau >1$.
\end{lem}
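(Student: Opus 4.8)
The plan is to reduce this weighted trace inequality on the half–sphere to a one–dimensional weighted trace inequality along the geodesics meeting $\partial S^{n}_{+}$ orthogonally, and then to prove the one–dimensional estimate by the fundamental theorem of calculus combined with a Cauchy–Schwarz bound, the correct $\tau$–powers being produced by \emph{averaging} the resulting identity against the natural weight over the scale $r\in(0,1/\tau)$.

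First I would pass to Fermi (geodesic normal) coordinates $(\omega,\varphi)\in S^{n-1}\times[0,\tfrac{\pi}{2})$ on $S^{n}_{+}$, where $\varphi$ is the geodesic distance to $\partial S^{n}_{+}=S^{n-1}$. In these coordinates the round metric is $d\varphi^{2}+\cos^{2}\varphi\,d\omega^{2}$, so that $\theta_{n}=\sin\varphi$, the volume element is $\cos^{n-1}\varphi\,d\varphi\,d\omega$, one has $|\partial_{\varphi}u|\le|\nabla_{S^{n}}u|$ pointwise, and on the collar $\{\varphi<\tfrac{\pi}{4}\}$ the Jacobian $\cos^{n-1}\varphi$ and the ratio $\sin\varphi/\varphi$ are bounded above and below; in particular $\varphi^{1-2s}\le C_{s}\,\theta_{n}^{1-2s}$ there. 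It therefore suffices to prove, for almost every fixed $\omega$ and with $g(\varphi):=u(\omega,\varphi)$,
\[
|g(0)|^{2}\ \lesssim_{s}\ \tau^{2-2s}\int_{0}^{\pi/4}\varphi^{1-2s}g(\varphi)^{2}\,d\varphi\ +\ \tau^{-2s}\int_{0}^{\pi/4}\varphi^{1-2s}g'(\varphi)^{2}\,d\varphi ,
\]
and then to integrate this over $\omega\in S^{n-1}$ and revert to the $S^{n}_{+}$–norms using the collar comparisons, $|\partial_{\varphi}u|\le|\nabla_{S^{n}}u|$, and the fact that shrinking the domain of integration only decreases the right–hand side. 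Here $g(0)$ is well defined because $\varphi^{2s-1}$ is integrable near $0$ (this is where $s>0$ enters), so a finite right–hand side forces $g$ to have an absolutely continuous representative up to $\varphi=0$.

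For the one–dimensional inequality I would start, for any $r\in(0,\delta)$ with $\delta:=\min(1/\tau,\tfrac{\pi}{4})$, from $g(0)=g(r)-\int_{0}^{r}g'\,d\varphi$, whence $g(0)^{2}\le 2g(r)^{2}+2\bigl(\int_{0}^{r}|g'|\,d\varphi\bigr)^{2}$, and bound the last term by Cauchy–Schwarz after splitting the weight as $\varphi^{(1-2s)/2}\!\cdot\varphi^{-(1-2s)/2}$, which gives $\bigl(\int_{0}^{r}|g'|\,d\varphi\bigr)^{2}\le \tfrac{r^{2s}}{2s}\int_{0}^{\pi/4}\varphi^{1-2s}g'^{2}\,d\varphi$. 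Multiplying the resulting inequality $g(0)^{2}\le 2g(r)^{2}+\tfrac{r^{2s}}{s}\int_{0}^{\pi/4}\varphi^{1-2s}g'^{2}$ by $r^{1-2s}$ and integrating in $r$ over $(0,\delta)$, and using $\int_{0}^{\delta}r^{1-2s}\,dr=\tfrac{\delta^{2-2s}}{2-2s}$ together with $\int_{0}^{\delta}r\,dr=\tfrac{\delta^{2}}{2}$, turns the $g(r)^{2}$ term into $C_{s}\,\delta^{-(2-2s)}\int_{0}^{\pi/4}\varphi^{1-2s}g^{2}$ and the derivative term into $C_{s}\,\delta^{2s}\int_{0}^{\pi/4}\varphi^{1-2s}g'^{2}$. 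Since $\delta^{-(2-2s)}\le C_{s}\tau^{2-2s}$ and $\delta^{2s}\le C_{s}\tau^{-2s}$ for all $\tau>1$ (the bounded range $1<\tau<\tfrac{4}{\pi}$ only affecting the constant), this is exactly the claimed bound; taking square roots, integrating over $\omega$ and converting back to the half–sphere finishes the proof of \eqref{eq:trint}.

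The only genuinely non-routine point — hence the main obstacle — is to extract the precise powers $\tau^{1-s}$ and $\tau^{-s}$ in the presence of the weight $\varphi^{1-2s}$ with $1-2s\in(-1,1)$, which may be degenerate ($s<\tfrac12$) or singular ($s>\tfrac12$) at the boundary. A naive cutoff localized at scale $1/\tau$ does not work, because the weight $\varphi^{2s-1}$ cannot be bounded pointwise on $(0,1/\tau)$ when $s<\tfrac12$; the device that fixes this is precisely to average the elementary FTC–Cauchy–Schwarz inequality against $r^{1-2s}\,dr$ over $(0,1/\tau)$, which symmetrically produces the factor $\delta^{-(2-2s)}=\tau^{2-2s}$ in front of the zeroth order term and $\delta^{2s}=\tau^{-2s}$ in front of the gradient term for every $s\in(0,1)$. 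Everything else (Fermi coordinates, the comparison of $\varphi$ with $\sin\varphi$, boundedness of the Jacobian on the collar) is routine.
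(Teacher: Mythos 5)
Your proof is correct, and it takes a genuinely different route from the paper. The paper deduces \eqref{eq:trint} from Herbst's (Hardy--trace) inequality on $\R^{n+1}_+$: it first obtains the additive trace bound on $B_1^+$, rescales to get a multiplicative interpolation inequality, converts it to the $\tau$-form by Young's inequality, and finally localizes to $S^n_+$ by extending $u$ zero-homogeneously to an annulus and cutting off. You instead work intrinsically on the half-sphere: Fermi coordinates $(\omega,\varphi)$ with metric $d\varphi^2+\cos^2\varphi\,d\omega^2$ reduce the claim to a one-dimensional weighted trace inequality along the geodesics normal to $S^{n-1}$, which you prove by the fundamental theorem of calculus, Cauchy--Schwarz against the split weight $\varphi^{\pm(1-2s)/2}$, and averaging the resulting pointwise bound against $r^{1-2s}\,dr$ over $(0,\min(1/\tau,\pi/4))$; the exponents $\tau^{1-s}$ and $\tau^{-s}$ then fall out of $\int_0^{1/\tau}r^{2s-1}dr\sim\tau^{-2s}$ and $\int_0^{1/\tau}r^{1-2s}dr\sim\tau^{-(2-2s)}$, and the collar comparisons $\sin\varphi\sim\varphi$, $\cos^{n-1}\varphi\sim 1$ transfer everything back to the $\theta_n$-weighted norms. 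Your argument is more elementary and self-contained (no appeal to Herbst), it makes the origin of the $\tau$-powers completely transparent for the full range $1-2s\in(-1,1)$, and since you only use $|\partial_\varphi u|\le|\nabla_{S^n}u|$ it actually yields the slightly stronger estimate with only the normal derivative on the right. What the paper's route buys in exchange is brevity (given the Hardy--trace inequality as a black box) and the scale-invariant multiplicative form of the inequality as an intermediate product. Both arguments silently assume enough regularity for the trace to exist; you at least justify this via the absolutely continuous representative, which is a small improvement over the statement's ``measurable''.
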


\begin{proof}
By Herbst's inequality (or the Hardy-trace inequality) we have
\begin{align*}
\left\| |y'|^{-s} w_{1} \right\|_{L^2(\R^{n})} \lesssim \left\| y_{n+1}^{\frac{1-2s}{2}} \nabla w_{1} \right\|_{L^2(\R^{n+1}_{+})},
\end{align*}
with $y=(y',y_{n+1})$, $y'\in \R^{n}$ and $s\in(0,1)$.
Applied to functions supported in $B_1^+(0)$ this leads to
\begin{align*}
\left\| w_{1} \right\|_{L^2(B_{1}(0))} \lesssim \left\| y_{n+1}^{\frac{1-2s}{2}}\nabla w_{1} \right\|_{L^2(B_{1}^+(0))} + \left\| y_{n+1}^{\frac{1-2s}{2}} w_{1} \right\|_{L^2(B_{1}^+(0))}.
\end{align*}
Rescaling, i.e. setting $w_{1}(x)=w(\mu x)$, yields
\begin{align*}
\left\| w \right\|_{L^2(B_{\mu}(0))} \lesssim \mu^{s-1} \left\| y_{n+1}^{\frac{1-2s}{2}}w\right\|_{L^2(B_{\mu}^+(0))}
+ \mu^{s} \left\| y_{n+1}^{\frac{1-2s}{2}}\nabla w\right\|_{L^2(B_{\mu}^+(0))}.
\end{align*}
From this, it is possible to obtain the multiplicative form of the inequality:
\begin{align*}
\left\| w \right\|_{L^2(B_{\mu}(0))} \lesssim  \left\| y_{n+1}^{\frac{1-2s}{2}}w\right\|_{L^2(B_{\mu}^+(0))}^{s} \left\| y_{n+1}^{\frac{1-2s}{2}}\nabla w\right\|_{L^2(B_{\mu}^+(0))}^{1-s},
\end{align*}
which -- by scaling -- can be applied to arbitrary functions in $C^{\infty}_{0}(\R^{n+1}_+)$. As a consequence, we obtain the estimate
\begin{align*}
\left\| w \right\|_{L^2(B_{\mu}(0))} \lesssim \tau^{1-s} \left\| y_{n+1}^{\frac{1-2s}{2}}w\right\|_{L^2(B_{\mu}^+(0))}
+ \tau^{-s} \left\| y_{n+1}^{\frac{1-2s}{2}}\nabla w\right\|_{L^2(B_{\mu}^+(0))},
\end{align*}
for all $\mu \geq 0$.
It remains to localize this estimate to the sphere. This can be achieved by extending an arbitrary function $u:S^n_+ \rightarrow \R$ zero-homogeneously into a neighbourhood of $S^n_+$. Using a cut-off function $\eta$, we apply the previous estimate to $w=\eta \tilde{u}$, where $\tilde{u}$ corresponds to the (zero-homogeneous) extension of $u$.
This results in
\begin{align*}
\left\| u \right\|_{L^2(S^{n-1})} &\lesssim \left\| w \right\|_{L^2(\R^{n})} \\
&\lesssim \tau^{-s}\left\| y_{n+1}^{\frac{1-2s}{2}}\nabla \tilde{u} \right\|_{L^2(B_{2}^+\setminus B_{\frac{1}{2}}^+)} + (\tau^{-s} + \tau^{1-s})\left\| y_{n+1}^{\frac{1-2s}{2}} \tilde{u} \right\|_{L^2(B_{2}^+\setminus B_{\frac{1}{2}}^+)}\\
& \lesssim  \tau^{-s}\left\| y_{n+1}^{\frac{1-2s}{2}}\nabla_{S^n} u \right\|_{L^2(S^{n}_+)} + \tau^{1-s}\left\| y_{n+1}^{\frac{1-2s}{2}} u \right\|_{L^2(S^{n}_+)},
\end{align*}
for $\tau > 1$.
\end{proof}

\emph{Step 4: Conclusion.}
We conclude the argument with a commutator estimate: After the projection onto the eigenvectors, the operator becomes purely one-dimensional 
\begin{align*}
L \E v:&=\left(\dt^2+(\va'(t))^2-2\va'(t)\dt-\va''(t)-\lambda^2- \frac{(n-2s)^2}{4}\right)\E v\\
& = -\Et h + \E \theta_n^{\frac{2s-1}{2}} f.
\end{align*}
It decomposes into a symmetric and an antisymmetric part. Setting $\mu^2 := \lambda^2 + \frac{(n-2s)^2}{4}$ leads to
\begin{align*}
S&=\dt^2 + (\va')^2-\mu^2,\\
A&=-2\va'\dt - \va''.
\end{align*}
As its commutator reads
\begin{align*}
\int\limits_{\R} ([S,A]\E v,\E v)dt = \int\limits_{\R}\va''(\va')^2(\E v)^2 + \va''(\E  v')^2 - \va'''' (\E v)^2 dt, 
\end{align*}
we obtain the estimate
\begin{align*}
\left\| L \E v\right\|_{L^2(\R)}^2 \geq & \ \left\| S \E v \right\|_{L^2(\R)}^2+ \left\| A \E v \right\|_{L^2(\R)}^2+ \int\limits_{\R} ([S,A]\E v,\E v)dt\\
 \geq & \ \left\| (\dt^2+\va'^2 - \mu^2) \E v \right\|_{L^2(\R)}^2+ \left\| (2\va'\dt+\va'') \E v \right\|_{L^2(\R)}^2\\
&+  \int\limits_{\R}\va''(\va')^2(\E v)^2 + \va'' (\E v')^2 - \va'''' (\E v)^2 dt.
\end{align*}
By assumption $\va$ is a convex weight of the form $\va(t)=-\tau t + \tau \psi$ and $\psi''(t)=\frac{1}{10(1+t^2)}$. Hence, we observe that the first two commutator contributions are positive and it is possible to absorb the potentially negative $\va'''' (\E v)^2$ contribution of the commutator in the other positive contributions. Therefore, we obtain
\begin{align}
\label{eq:crit}
 \left\| (\va'')^{\frac{1}{2}}\va' \E v \right\|_{L^2(\R)} +  \left\| (\va'')^{\frac{1}{2}}\E v' \right\|_{L^2(\R)} \lesssim \left\| L \E v \right\|_{L^2(\R)}.
\end{align}
Moreover, we note that in the regimes $\lambda \geq 4\tau$ and $\lambda \leq \frac{\tau}{2}$ the symmetric part of the operator is elliptic (where the symbol of the operator is interpreted as a symbol in the $t$- \emph{and} $\tau$-variables), as by virtue of the definition of the weight function $|\va'|\in [\frac{3}{4}\tau,2\tau]$. Hence, by scaling we also obtain
\begin{align}
\label{eq:ell}
\lambda^2 \left\| \E v \right\|_{L^2(\R)} + \lambda \left\| \E v' \right\|_{L^2(\R)} \lesssim \left\| L \E v \right\|_{L^2(\R)} 
\end{align}
in these two elliptic regimes. By definition of the space $\dot{H}^{1}_{\theta}$, it holds
\begin{align*}
\left\| \E v \right\|_{\dot{H}^{1}_{\theta}(S^n_+)} = \lambda \left\| \E v \right\|_{L^2(S^n_+)}.
\end{align*}
Integrating the estimates (\ref{eq:crit}) and (\ref{eq:ell}) over $S^n_+$, shows
\begin{align*}
\left\| (\va'')^{\frac{1}{2}}(\va')\E v \right\|_{\dot{H}^{1}_{\theta}(S^n_+)L^2_t(\R)} \lesssim \left\| L \E v \right\|_{L^2(S^n_+\times \R)}. 
\end{align*}
Thus, these estimates yield the bulk contributions of the left hand side of the Carleman inequality. \\
We proceed by estimating the contributions on the right hand side of the Carleman inequality:
\begin{align*}
\left\| L\E v \right\|_{L^2(S^n_+ \times \R)} \leq  \left\| \E f \right\|_{L^2(S^n\times \R)} +  \left\| \Et h \right\|_{L^2(\p S^n\times \R)}. 
\end{align*}
In this context, it suffices to discuss the second term.
It can be estimated via the trace inequality:
\begin{align*}
 \left\| \Et h \right\|_{L^2(\p S^n\times \R)}
\leq \lambda^{1-s} \left\| \E h \right\|_{L^2(S^n \times \R)}. 
\end{align*}
In the low and critical frequency regimes, i.e. if $\lambda \leq 4\tau$, this can be estimated by $ \tau^{1-s} \left\| \E h \right\|_{L^2(S^n \times \R)}$. In the high frequency regime, we can also replace the $\lambda$ factor by $\tau$, as then the estimates become elliptic, e.g. the $L^2$ bulk estimate (with $f=0$) then reads
\begin{align*}
\left\| \E v \right\|_{L^2(S^n_+\times \R)} \lesssim \lambda^{-1-s}\left\| \E h\right\|_{L^2(S^n_+\times \R)} \lesssim \tau^{-1-s}\left\| \E h\right\|_{L^2(S^n_+\times \R)}.
\end{align*}
The other contributions can be treated analogously.
Combined with the previous considerations this implies the estimate
\begin{align*}
&\tau^{-\frac{1}{2}} \left\| (\va''(t))^{\frac{1}{2}}\theta_{n}^{\frac{1-2s}{2}} \nabla_{S^n} \theta_n^{\frac{2s-1}{2}}v \right\|_{L^2(\R \times S^{n}_{+})} + \tau^{-\frac{1}{2}} \left\| (\va''(t))^{\frac{1}{2}}\dt v \right\|_{L^2(\R \times S^{n}_{+})}\\
& + \tau^{\frac{1}{2}} \left\|  (\va''(t))^{\frac{1}{2}} v \right\|_{L^{2}(\R \times S^{n}_{+})} \\
\lesssim & \ \tau^{- \frac{1}{2}} \left\|  \theta_n^{\frac{2s-1}{2}} f\right\|_{L^2(\R \times S^{n}_{+})} 
+ \tau^{\frac{1-2s}{2}}  \left\| h \right\|_{L^{2}(\R \times \partial S^{n}_{+})}.
\end{align*}
Now, the estimate on the boundary contributions follows from the interpolation inequality (\ref{eq:trint}) applied to $u= \theta^{\frac{2s-1}{2}}_n v$, Fubini's theorem and the condition $|\va'| \in [\frac{3}{4}\tau,2\tau]$:
\begin{align*}
\tau^{s} \left\| (\va''(t))^{\frac{1}{2}} \lim\limits_{\theta_n \rightarrow 0} \theta_n^{\frac{2s-1}{2}}  v \right\|_{L^2(\R \times \p S^n_+)} &\lesssim
 \left\| (\va'')^{\frac{1}{2}}(\va') v \right\|_{L^2(\R^2)} +  \left\| (\va'')^{\frac{1}{2}} v' \right\|_{L^2(\R^2)}\\
& \lesssim \left\| L\E v \right\|_{L^2(\R)}. 
\end{align*}
This yields the full result.
\end{proof}

Before deducing further results from this, we pause for a few remarks:

\begin{rmk}[Spectral Gap]
\label{rmk:Carlsym}
The equations (\ref{eq:op1}) and (\ref{eq:sym}) contain the structure of the operator in its cleanest form: 
\begin{equation}
\label{eq:OP}
\begin{split}
\tilde{\D} &:= \dt^2 - \frac{(n-2s)^2}{4} + \theta^{- \frac{1-2s}{2}}_{n} \nabla_{S^n} \cdot \theta^{1-2s}_{n} \nabla_{S^n} \theta_{n}^{- \frac{1-2s}{2}}\\
& =: \dt^2  - \frac{(n-2s)^2}{4} - \tilde{\D}_{\theta}.
\end{split}
\end{equation}
The corresponding boundary values turn into
\begin{align*}
\lim\limits_{\theta_{n}\rightarrow 0} \theta_{n}^{1-2s} \nu \cdot \nabla_{S^{n}}\theta^{- \frac{1-2s}{2}}_{n}v = e^{2st}\lim\limits_{\theta_{n}\rightarrow 0}\theta^{-\frac{1-2s}{2}}_{n}V v.
\end{align*}
At first sight, one could hope that the eigenvalue expansion of the spherical operator leads to a situation comparable to that of Koch and Tataru \cite{KT1}. However, this is not clear. As the $\theta_n$-factors break the full rotation symmetry, the spectral gap of the spherical Laplacian need not be preserved.\\
We note that \emph{in the case} of a spectral gap of constant strength, the Carleman inequality, (\ref{eq:Carlflsymb}), can be further improved by an estimate of the form
\begin{align*}
\dist(\va'(t), \spec(\tilde{\D}_{\theta})) \left\| \theta_{n}^{\frac{1-2s}{2}} u \right\|_{L^2(\R \times S^n_{+})} \\
\lesssim \ \tau^{-\frac{1}{2}}\left\| \theta^{\frac{2s-1}{2}}_n f \right\|_{L^2(\R \times S^n_+)}
&+\tau^{\frac{1-2s}{2}}\left\| h \right\|_{L^2(\R \times \p S^n_+)}.
\end{align*}
A similar remark holds for the gradient inequality. This type of estimates can, for example, be seen by constructing a parametrix for the operator
\begin{align*}
e^{-\va(t)}\left( \dt^2   - \frac{(n-2s)^2}{4} - \tilde{\D}_{\theta} \right) e^{\va(t)}
\end{align*}
on each eigenspace. Following Koch and Tataru \cite{KT1}, using $\va'(t)\leq 0$, $\va''(t)>0$ and setting 
\begin{equation*} 
\mu=  \sqrt{\frac{(n-2s)^2}{4} + \lambda^2}, 
\end{equation*}
the kernel of this parametrix reads
\begin{align*}
K_{\mu}(t,s)= e^{\va(t)-\va(s)} \left\{ 
\begin{array}{ll}
- \frac{1}{2} \mu^{-1} e^{- \mu|t-s|} & \mbox{ if } t> T(\mu),\\
\mu^{-1} \sinh(\mu(s-t)) & \mbox{ if } T(\mu)>t>s,\\
0 & \mbox{ if } T(\mu),s>t,
\end{array}
\right.
\end{align*}
on the eigenspace associated with the eigenvalue $\mu$. Here $T(\mu)$ is a solution of
\begin{align*}
\va'(t) = -\mu, 
\end{align*}
if $\mu$ is in the range of $\va'$ and else is defined as 
\begin{align*}
T(\mu)= \left\{
\begin{array}{ll}
- \infty &\mbox{ if } -\mu<\va',\\
+\infty & \mbox{ if } -\mu> \va'.
\end{array}
\right.
\end{align*}
Thus, using convexity, the kernel can be estimated by
\begin{align*}
|K_{\mu}(t,s)| \leq \tau^{-1}e^{- \dist(\va'(t),\mu)|t-s|}
\end{align*} 
in the critical regime in which $\va' \in [\frac{\tau}{2},4\tau]$.
Combined with Young's inequality and the estimates in the low and high frequency elliptic regimes, this implies the claimed $L^2$ bound.
We will use this for the one-dimensional fractional Laplacian, c.f. Section \ref{sec:1D}.
\end{rmk}

\begin{rmk}
\label{rmk:anti}
From the antisymmetric part of the operator we can obtain further $L^2$ bounds in combination with Poincar\'e's inequality. Using the same notation as in Remark \ref{rmk:not} and in the proof of Proposition \ref{prop:KT1b}, we assume that $w$ is supported in $\{\delta \leq |y| \leq R \}$ or in other words, $v$ is supported in $\{\ln(\delta) \leq t \leq \ln(R) \}\times S_{+}^n$. Then, for $0<c_{0}<c<C_{0}<\infty$ and $R\geq C_{0}\delta$, the antisymmetric operator can be estimated from below 
\begin{align*}
\left\| A v \right\|_{L^2((\ln(\delta), \ln(R))\times S^{n}_{+}) }^2  \geq & \ \left\| A v \right\|_{L^2((\ln(\delta), \ln(c \delta))\times S^{n}_{+}) }^2 \\
= & \ \tau^2 \left\| (2\dt \phi \dt+\dt^2 \phi) v \right\|_{L^2((\ln(\delta), \ln(c \delta))\times S^{n}_{+}) }^2\\
 \gtrsim & \ \tau^2 \left\| (\dt \phi ) \dt v \right\|_{L^2((\ln(\delta), \ln(c\delta))\times S^{n}_{+})}^2\\
& - \tau^2 \left\| (\dt^2 \phi)v \right\|_{L^2((\ln(\delta), \ln(c \delta))\times S^{n}_{+}) }^2.
\end{align*}
While considering the second quantity in this inequality as a controlled error contribution, we further estimate the first one. Using
\begin{align*}
\dt \phi \dt v = \dt (\dt \phi  v) - \dt^2 \phi  v
\end{align*}
as well as $\dt v|_{(e^{t},\theta)} = e^{-t}\dt g|_{(t,\theta)} $ with $g(t,\theta)=v(e^{t},\theta)$ in combination with Poincar\'e's inequality leads to:
\begin{align*}
\left\| A v \right\|_{L^2((\ln(\delta), \ln(R))\times S^{n}_{+}) }^2  \gtrsim & \ \tau^2 \delta^{-2}\left\| \dt \phi v \right\|_{L^2((\ln(\delta), \ln( c\delta))\times S^{n}_{+}) }^2\\
& - 2\tau^2 \left\| (\dt^2 \phi)v \right\|_{L^2((\ln(\delta), \ln(c \delta))\times S^{n}_{+}) }^2.
\end{align*}
Recalling the proof of the Carleman estimate (\ref{eq:Carlflsymb}), we observe that the right hand side of the inequality, in particular, bounds the antisymmetric part of the operator. In $v$-variables and using $\va= \tau \phi$, this amounts to the estimate
\begin{equation*}
\begin{split}
&\tau^{-\frac{1}{2}} \left\| (\va''(t))^{\frac{1}{2}}\theta_{n}^{\frac{1-2s}{2}} \nabla_{S^n} \theta_n^{\frac{2s-1}{2}}v \right\|_{L^2(\R \times S^{n}_{+})} + \tau^{-\frac{1}{2}} \left\| (\va''(t))^{\frac{1}{2}}\dt v \right\|_{L^2(\R \times S^{n}_{+})}\\
& + \tau^{\frac{1}{2}} \left\|  (\va''(t))^{\frac{1}{2}} v \right\|_{L^{2}(\R \times S^{n}_{+})} 
+ \tau^{-\frac{1}{2}}\left\| (2\va'(t)+ \va''(t))v \right\|_{L^2(\R \times S^{n}_{+})} \\
&+ \tau^{\frac{2s-1}{2}} \left\| (\va''(t))^{\frac{1}{2}} \lim\limits_{\theta_{n\rightarrow 0}} \theta_n^{\frac{2s-1}{2}} v \right\|_{L^2(\R \times \partial S^{n}_{+})}\\
\lesssim & \ \tau^{- \frac{1}{2}} \left\|  \theta_n^{\frac{2s-1}{2}} f\right\|_{L^2(\R \times S^{n}_{+})} 
+ \tau^{\frac{1-2s}{2}}  \left\| h \right\|_{L^{2}(\R \times \partial S^{n}_{+})} \mbox{ for } \tau \geq \tau_0 >0.
\end{split}
\end{equation*}
Hence, as the error term can be absorbed in the Carleman inequality, the estimate from above corresponds to 
\begin{align*}
&\tau^2 \delta^{-2} \left\| e^{\tau \phi} y_{n+1}^{\frac{1-2s}{2}} w \right\|_{L^2(B_{c \delta}\setminus B_{ \delta}) }^2 \lesssim  \ \left\|e^{\tau \phi}  y_{n+1}^{\frac{2s-1}{2}} |y| \nabla\cdot y^{1-2s}_{n+1}\nabla w \right\|_{L^2(B_{R}\setminus B_{ \delta}) }^2
\end{align*}
in Cartesian coordinates (if $h=0$).
\end{rmk}

\subsection{Consequences of the Carleman Estimate (\ref{eq:Carlflsymb})}
\label{sec:cor}
From the previous estimates we obtain a unique continuation result in the case of infinite order vanishing in both the tangential and normal directions.

\begin{cor}[SUCP I]
\label{cor:schp}
Let $s\in(0,1)$ and let $w:\R^n \rightarrow \R$, $w\in H^{s}$, be a solution of
\begin{align*}
(-\D)^{s} w = Vw,
\end{align*}
with $V= V_{1} + V_{2}$,
\begin{align*}
V_{1}(y) = |y|^{-2s}h\left( \frac{y}{|y|} \right), \ \ h\in L^{\infty} ,\ \ |V_{2}(y)| \leq c|y|^{-2s+\epsilon}. 
\end{align*}
For $s < \frac{1}{2}$, we additionally require that one of the following assumptions is satisfied:
\begin{itemize}
\item the potential $V_2$ satisfies
$
V_{2}\in C^{1}(\R^n\setminus \{0\}) \mbox{ and }
|y\cdot \nabla V_{2}| \lesssim c|y|^{-2s+\epsilon},
$
\item $s\in [\frac{1}{4}, \frac{1}{2})$ and $V_1 \equiv 0$.
\end{itemize}

Let $\tilde{w}$ denote the Caffarelli extension of $w$.
If  $\tilde{w}$ vanishes of infinite order at $0$ in both the tangential and normal directions, 
then
\begin{align*}
w \equiv 0.
\end{align*}
\end{cor}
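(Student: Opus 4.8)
The plan is to combine the Carleman estimate of Proposition \ref{prop:KT1b} with the scaling-critical structure of $V$ and a cutoff argument. First I would pass from $w$ to its Caffarelli--Silvestre extension $\tilde w$, which solves $\nabla\cdot y_{n+1}^{1-2s}\nabla\tilde w = 0$ in $\R^{n+1}_+$ with generalized Neumann data $\lim_{y_{n+1}\to 0}y_{n+1}^{1-2s}\p_{n+1}\tilde w = -V w = -Vw$ on $\R^n$; here both $f=0$ and $h=-Vw$. I would then fix a radial cutoff $\chi = \chi(|y|)$ which is $1$ on $B_{1/2}$, supported in $B_1$, and apply Proposition \ref{prop:KT1b} to $\chi\tilde w$. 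The commutator $[\nabla\cdot y_{n+1}^{1-2s}\nabla,\chi]\tilde w$ is supported in the annulus $\tfrac12\le|y|\le 1$, where $e^{\tau\phi}$ is bounded by a fixed constant (since $\phi(y)\approx -\ln|y|$ is $O(1)$ there), so these terms contribute an error of the form $C e^{\tau\phi(1/2)}(\|\tilde w\|_{H^1(y_{n+1}^{1-2s})}+\dots)$ which is harmless once $\tau$ is large. The infinite-order vanishing of $\tilde w$ in Definition \ref{defi:vio} (in both bulk and boundary senses — and this is exactly what the hypothesis on $\tilde w$ gives us) ensures that the ``inner'' boundary terms near the origin vanish and that the weighted norms on the left are genuinely controlling $\tilde w$ on a fixed ball $B_{r_0}$.

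The heart of the matter is to absorb the potential term $\tau^{\frac{1-2s}{2}}\|e^{\tau\phi}|y|^s h\|_{L^2(\R^n)}$, where $h = -Vw = -(V_1+V_2)w$, back into the left-hand side. For the boundary trace term $w$ on $\R^n$ I would use the left-hand contribution $\tau^s\|e^{\tau\phi}(1+\ln|y|^2)^{-1/2}|y|^{-s}w\|_{L^2(\R^n)}$. Writing $|y|^s h = |y|^s V w$ and using $|V_1|\lesssim |y|^{-2s}$, $|V_2|\lesssim |y|^{-2s+\epsilon}$, we get $|y|^s |Vw|\lesssim |y|^{-s}|w| + |y|^{-s+\epsilon}|w|$, so on $B_{r_0}$ with $r_0$ small,
\begin{align*}
\tau^{\frac{1-2s}{2}}\left\|e^{\tau\phi}|y|^s h\right\|_{L^2(\R^n)} \lesssim \tau^{\frac{1-2s}{2}}\left\|e^{\tau\phi}|y|^{-s}w\right\|_{L^2(B_{r_0})} + (\text{error from }|y|\ge r_0).
\end{align*}
Comparing the powers of $\tau$: the left-hand boundary term carries $\tau^s$ and an extra $(1+\ln|y|^2)^{-1/2}$, while the right-hand term carries $\tau^{\frac{1-2s}{2}}$. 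Since $s \geq \frac14$ forces $\frac{1-2s}{2} \leq \tfrac14 \leq s$, the left-hand power of $\tau$ dominates, but one loses the logarithmic weight; this is precisely why the weight $\phi$ contains the $\arctan$ correction, whose second derivative $\psi''(t) = \tfrac1{10(1+t^2)}$ supplies the factor $(1+\ln|y|^2)^{-1}$ needed to beat the logarithm (one has $|y|^{-s}w$ against $(1+\ln|y|^2)^{-1/2}|y|^{-s}w$, and the extra log is paid for by choosing $r_0$ small enough and $\tau$ large). For $s\in[\frac14,\frac12)$ with $V_1\equiv0$, the purely subcritical $V_2$ gives an extra $|y|^\epsilon$ gain which makes the absorption immediate without needing to match the critical power exactly.

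With the potential term absorbed, the Carleman inequality reduces to
\begin{align*}
\tau\left\|e^{\tau\phi}(1+\ln|y|^2)^{-1/2}y_{n+1}^{\frac{1-2s}{2}}|y|^{-1}\tilde w\right\|_{L^2(B_{r_0}^+)} \lesssim C(r_0)\,e^{\tau\phi(r_0)},
\end{align*}
where the right side is the fixed error coming from the cutoff annulus and the truncated potential tails. Now I would choose a radius $\rho < r_0$ and restrict the left-hand integral to $B_\rho^+$, where $e^{\tau\phi}\geq e^{\tau\phi(\rho)}$ (as $\phi$ is decreasing in $|y|$): this yields $\tau e^{\tau\phi(\rho)}\|\cdots\|_{L^2(B_\rho^+)} \lesssim C(r_0) e^{\tau\phi(r_0)}$, hence $\|y_{n+1}^{\frac{1-2s}{2}}|y|^{-1}\tilde w\|_{L^2(B_\rho^+)} \lesssim \tau^{-1}C(r_0)e^{\tau(\phi(r_0)-\phi(\rho))}$. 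Since $\rho<r_0$ and $\phi$ is strictly decreasing, $\phi(r_0)-\phi(\rho)<0$, so letting $\tau\to\infty$ forces the left side to vanish: $\tilde w\equiv 0$ on $B_\rho^+$, hence $w\equiv 0$ on $B_\rho$. Finally, the weak unique continuation principle (Proposition \ref{prop:wuc} / \ref{lem:wuc}) upgrades $w\equiv0$ near the origin to $w\equiv0$ on all of $\R^n$. The main obstacle is the absorption step in the second paragraph: tracking the exact interplay between the $\tau$-powers $\tau^s$ versus $\tau^{\frac{1-2s}{2}}$ and the logarithmic weights, and verifying that the restriction $s\geq\frac14$ (or the subcriticality $V_1\equiv0$ for smaller $s$) is exactly what makes the critical potential absorbable — this is where the precise form of $\phi$ and the hypothesis on $V_2$ in the case $s<\frac12$ enter. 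The bulk term $f=0$ makes the $\tau^{-1/2}\|e^{\tau\phi}|y|y_{n+1}^{\frac{2s-1}{2}}f\|$ contribution trivial, which is a simplification particular to this reduction; were $f\neq0$ one would need a Hardy inequality in the bulk as well.
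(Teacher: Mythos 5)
Your overall architecture (extend, cut off, apply Proposition \ref{prop:KT1b}, absorb the boundary potential term, let $\tau\to\infty$, restrict to a smaller ball) is the same as the paper's first proof, and for the \emph{subcritical} part $V_2$ with $s\geq\frac14$ your absorption is exactly the paper's: the gain $|y|^{\epsilon}$ beats the logarithmic loss $(1+\ln(|y|)^2)^{1/2}$, and $\tau^{\frac{1-2s}{2}}\leq\tau^{s}$. However, there is a genuine gap in your treatment of the scaling-critical part $V_1$. There $|y|^{s}|V_1w|\sim|y|^{-s}|w|$ with \emph{no} gain in $|y|$, so you are trying to absorb $\tau^{\frac{1-2s}{2}}\|e^{\tau\phi}|y|^{-s}w\|_{L^2}$ into $\tau^{s}\|e^{\tau\phi}(1+\ln(|y|)^2)^{-\frac12}|y|^{-s}w\|_{L^2}$. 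The ratio of the two weights is $(1+\ln(|y|)^2)^{\frac12}$, which is unbounded as $|y|\to0$, while the ratio of the $\tau$-powers is the \emph{fixed} factor $\tau^{s-\frac{1-2s}{2}}$. Choosing $r_0$ small and $\tau$ large only controls the region $|y|\geq e^{-c\tau^{(4s-1)/2}}$; on the remaining punctured neighbourhood of the origin (where $e^{\tau\phi}$ is largest) the right-hand side strictly dominates the left, so the absorption fails. This is precisely why the paper's first proof is restricted to subcritical potentials, and why a second, structurally different argument is given for $V_1\not\equiv0$: one works with the full (unprojected) conjugated operator, keeps the boundary integrals explicit, and integrates by parts in $t$. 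The exact scale invariance $e^{2st}V_1=h(\theta)$ makes the dangerous $\p_t\phi$-weighted boundary terms drop out, leaving only terms weighted by $\p_t^2\phi\sim\tau(1+t^2)^{-1}$, which carry the same logarithmic decay as the left-hand side and are then controlled by the trace interpolation inequality \eqref{eq:trint}. You would need to add this step (or an equivalent one) to handle $V_1$.

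Two further points. First, for $s<\frac14$ one has $\tau^{\frac{1-2s}{2}}>\tau^{s}$, so even the subcritical absorption by $\tau$-power comparison fails; this is where the hypothesis $V_2\in C^1$ with $|y\cdot\nabla V_2|\lesssim|y|^{-2s+\epsilon}$ enters, again via an integration by parts in $t$ on the boundary term $\tau\int\p_t\phi\,e^{2st}V_2\,u\,\p_tu$. Your proof as written does not cover this range, although the corollary claims it. Second, a small inconsistency: having introduced the cutoff $\chi$, you no longer have $f=0$ (the commutator terms \emph{are} the bulk right-hand side), so the closing remark that ``$f=0$ makes the bulk contribution trivial'' should be dropped; you already treated those terms correctly as annulus-supported errors. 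The handling of the inner cutoff near the origin is also done more carefully in the paper (an explicit $\delta$-cutoff, an interpolation inequality for the gradient, and the limit $\delta\to0$ using the infinite-order vanishing), but your sketch of that step is acceptable.
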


\begin{rmk}
\label{rmk:vani}
Before presenting the proof, we deduce an estimate for (critically) weighted boundary terms, involving e.g. $V(y) \sim |y|^{-2s}$. Due to the infinite order of vanishing of $w$ along the boundary, for any $\epsilon >0 $ and any $m\in \N$, there exists a radius $\bar{r} = \bar{r}(m,\epsilon) >0$ such that for all $0<r\leq \bar{r}$
\begin{align*}
\int\limits_{B_{2r}\setminus B_{r}} |V|w^2 dy \lesssim r^{-2s}\int\limits_{B_{2r}\setminus B_{r}}w^2 dy \leq \epsilon r^{m}.
\end{align*}
As a result,
\begin{align*}
\int\limits_{B_{2r}} |V|w^2 dy = & \ \sum\limits_{j\in \N} \int\limits_{B_{2^{-j}r}\setminus B_{2^{-j-1}r}} |V|w^2 dy  \\
\lesssim & \ \epsilon r^m \sum\limits_{j\in \N} 2^{-jm}\\
\lesssim  &\ \epsilon r^m.
\end{align*}
Hence, the infinite rate of vanishing of $u$ on the boundary also implies that (singularly) weighted boundary integrals have an infinite rate of vanishing.
\end{rmk}

We present the proof for subcritically scaling potentials in the case $s\geq \frac{1}{4}$ first. Then we indicate how to modify the previous arguments for $0<s<\frac{1}{4}$ and in the case of scale-invariant potentials. 

\begin{proof}[Proof in the Case of Subcritical Potentials and $s\geq \frac{1}{4}$]
\textit{Step 1: Interpolation.} For $w \in C^{\infty}_{0}(\overline{Q_{\epsilon}^{+}})$ and $0\leq \epsilon \leq \frac{1}{2}$ the following interpolation inequality holds true:
\begin{align*}
\frac{1}{\epsilon^2} \int\limits_{Q_{\epsilon}^+}y^{1-2s}_{n+1}|\nabla w|^2 dy \lesssim
\frac{C(\mu)}{\epsilon^4} \int\limits_{Q_{\epsilon}^+} y_{n+1}^{1-2s}|w|^2 dy +
\mu^2 \int\limits_{Q_{\epsilon}^+} y_{n+1}^{2s-1}|\nabla \cdot y^{1-2s}_{n+1}\nabla w|^2 dy\\
 - \frac{1}{\epsilon^2} \int\limits_{Q_{\epsilon}^+\cap \{y_{n+1}=0\}} w y_{n+1}^{1-2s}\p_{n+1}w dy',
\end{align*}
where $Q_{\epsilon}^+=[-\epsilon, \epsilon]^{n}\times [0,\epsilon]$.
This estimate will be employed in deriving the infinite order of vanishing of the gradient from the infinite order of vanishing of $\frac{1}{\epsilon^4} \int\limits_{B_{\epsilon}} y_{n+1}^{1-2s}w^2 dy$ for (almost) solutions.\\
The inequality is a result of integration by parts and the support condition on $w$. In fact, we have
\begin{align*}
\int\limits_{Q_{1}^+} y_{n+1}^{1-2s}|\nabla' w|^2 dy = & \ 
-\int\limits_{0}^{1} y_{n+1}^{1-2s} \int\limits_{[-1,1]^{n}} w \D' w(\cdot,y_{n+1}) dy' dy_{n+1} .
\end{align*} 
Moreover,
\begin{align*}
\int\limits_{Q_{1}^+} y_{n+1}^{1-2s}| \p_{n+1} w|^2 dy = & \ 
-\int\limits_{[-1,1]^{n}} \int\limits_{0}^{1} w(\p_{n+1} y_{n+1}^{1-2s} \p_{n+1} w)  dy_{n+1} dy' \\
& \ - \int\limits_{[-1,1]^n \times \{0\}} w y_{n+1}^{1-2s}\p_{n+1}w dy'. 
\end{align*}
Combining these two estimates yields
\begin{align*}
\int\limits_{Q_{1}^+}y^{1-2s}_{n+1}|\nabla w|^2 dy 
\leq & \ \int\limits_{Q_{1}^+}| w \nabla \cdot y_{n+1}^{1-2s}\nabla w| dy
+ \int\limits_{[-1,1]^n \times \{0\}} w y_{n+1}^{1-2s}\p_{n+1}w dy\\
\leq &  \ \mu^2 \int\limits_{Q_{1}^+}  y_{n+1}^{2s-1}|\nabla \cdot y_{n+1}^{1-2s}\nabla w|^2 dy + 
C(\mu) \int\limits_{Q_{1}^+}  y_{n+1}^{1-2s}|w|^2 dy \\
& \ - \int\limits_{[-1,1]^n \times \{0\}} w y_{n+1}^{1-2s}\p_{n+1}w dy.
\end{align*}
The claimed inequality now follows from scaling.\\

\textit{Step 2: Cut-off Errors.} Denoting the Caffarelli extension of $w$ by $\tilde{w}$, we consider $\bar{w} = \tilde{w}\eta_{\delta,r}$ where $\e$ is a radial cut-off function which equals one on an annulus with radii approximately determined by $ \delta $ and $r$ where $0< \delta \ll r <1$.
Thus, $\bar{w}$ satisfies
\begin{equation}
\begin{split}
\label{eq:errorf}
\nabla \cdot y^{1-2s}_{n+1} \nabla \bar{w} = & \ y_{n+1}^{1-2s}\e'' \tilde{w} + y^{1-2s}_{n+1}  \e' \frac{y}{|y|}  \cdot \nabla \tilde{w} \\
& + (n+1-2s)y_{n+1}^{1-2s} \frac{1}{|y|}\e' \tilde{w} \mbox{ in } \R^{n+1}, \\
-\lim\limits_{y_{n+1}\rightarrow 0} y_{n+1}^{1-2s} \p_{n+1} \bar{w} &= V \bar{w} \mbox{ on } \R^{n}.
\end{split}
\end{equation}
Due to the cut-off, it is an admissible function in the Carleman inequality of Proposition \ref{prop:KT1b}. Inserting it into the Carleman inequality, we notice that we may pass to the limit $\delta \rightarrow 0$:
This follows from step 1 (in which $\mu$ is chosen sufficiently small) and the infinite order of vanishing of $\bar{w}$. Hence, the only remaining cut-off is at the scale $r>0$.\\

\emph{Step 3: Conclusion for Potentials with Subcritical Scaling.} 
We consider the different contributions of the Carleman inequality:
\begin{equation*}
\begin{split}
& \left\| e^{\tau \phi}\left(1+\ln(|y|)^2\right)^{-\frac{1}{2}}y_{n+1}^{\frac{1-2s}{2}} \nabla w \right\|_{L^2(\R^{n+1}_{+} )}  \\ 
&+ \tau \left\| e^{\tau \phi}\left(1+\ln(|y|)^2\right)^{-\frac{1}{2}}y_{n+1}^{\frac{1-2s}{2}}|y|^{-1} w \right\|_{L^{2}(\R^{n+1}_{+})}\\
&+ \tau^{s} \left\| e^{\tau \phi}\left(1+\ln(|y|)^2\right)^{-\frac{1}{2}}|y|^{-s} w \right\|_{L^2(\R^{n})}\\
\lesssim & \ \tau^{- \frac{1}{2}} \left\|e^{\tau \phi} |y| y_{n+1}^{\frac{2s-1}{2}} f\right\|_{L^2(\R^{n+1}_{+})} 
\ + \tau^{\frac{1-2s}{2}}  \left\| e^{\tau \phi} |y|^{s} h \right\|_{L^{2}(\R^{n})}.
\end{split}
\end{equation*}
As all the right hand side terms of (\ref{eq:errorf}) involve derivatives of $\eta_{0,r}$ (which are, in particular, only active at scales $r>0$), they can be treated as controlled perturbations. Thus, it remains to investigate the boundary contributions. We recall $|V(y)| \leq |y|^{-2s+\epsilon}$.
This leads to a boundary contribution of
\begin{align*}
\tau^{s} \left\| (1+\ln(|y|)^2)^{-\frac{1}{2}} |y|^{-s} w \right\|_{L^2}
\end{align*}
on the left hand side of the Carleman inequality, and a contribution of the form
\begin{align}
\label{eq:rhsC} 
\tau^{\frac{1-2s}{2}} \left\|  |y|^{s} h \right\|_{L^2} \lesssim \tau^{\frac{1-2s}{2}} \left\| |y|^{-s+\epsilon} w\right\|_{L^2},
\end{align}
on the right hand side of the Carleman estimate. We note that in the case $s\geq \frac{1}{4}$ the $\tau$ contributions on the right hand side of the Carleman estimate are smaller or equal to the $\tau$ contributions on the left hand side. Thus, a strategy in which the dangerous terms of the right hand side are absorbed in the left hand side of the Carleman inequality is possible.
By virtue of the choice of the cut-off $\eta_{0,r}$, it suffices to consider $|y|<r$. 
Due to the subcriticality of $V$ and as the loss on the left hand side of the Carleman inequality is only logarithmic, the term on the right hand side of (\ref{eq:rhsC}) can be absorbed in the left hand side of the Carleman inequality. In the limit $\tau \rightarrow \infty$ this yields the desired result for (rough) subcritical potentials.
\end{proof}

In the sequel, we comment on the proof of Corollary \ref{cor:schp} in the case $s\in(0,\frac{1}{4})$ and in the setting involving scale-invariant potentials. For this, we argue via slightly different methods in obtaining the crucial Carleman estimates: In contrast to the previous arguments we do not carry out a decomposition into the spherical eigenvalues but work with the full operator.

\begin{proof}[Proof for $s\in (0,\frac{1}{2})$ and for Scaling-Critical Potentials]
\textit{Step 1: Conjugation and \\bulk contributions.}
We carry out the Carleman argument without projecting onto eigenvalues of the spherical operator. We start with the operator in conformal coordinates
\begin{align*}
\dt^2 - \frac{(n-2s)^2}{4} + \theta^{- \frac{1-2s}{2}}_{n}\nabla_{S^{n}}\cdot \theta_{n}^{1-2s} \nabla_{S^{n}}\theta^{- \frac{1-2s}{2}}_{n}.
\end{align*}
Conjugation with an only $t$-dependent weight $\phi$, leads to the following symmetric and antisymmetric parts of the operator:
\begin{align*}
S & = \dt^2 + \tau^2(\dt \phi)^2 - \frac{(n-2s)^2}{4} + \theta^{-\frac{1-2s}{2}}_{n}\nabla_{S^{n}}\cdot \theta_{n}^{1-2s}\nabla_{S^{n}}\theta^{- \frac{1-2s}{2}}_{n},\\
A & = -2\tau (\dt \phi)\dt - \tau \dt^2 \phi.
\end{align*}
If $\phi$ is sufficiently pseudoconvex this yields positive commutator terms. Furthermore, weighted gradient estimates can be obtained:
\begin{align*}
& ((\dt^2 \phi) \dt v, \dt v) + ((\dt^2 \phi)\theta^{1-2s}_{n} \nabla_{S^{n}}\theta_{n}^{-\frac{1-2s}{2}}v,\nabla_{S^{n}}\theta_{n}^{-\frac{1-2s}{2}}v)\\
=& \ -(Sv,(\dt^2 \phi) v) + \int\limits_{\p S^{n}_{+}\times \R} (\dt^2 \phi)\theta^{1-2s}_{n} (\nu \cdot \nabla_{S^{n}}\theta^{- \frac{1-2s}{2}}_{n}v) \theta^{- \frac{1-2s}{2}}_{n}v d\theta dt + ((\dt^4 \phi) v, v)\\
 \leq & \ \frac{1}{2 \tau^2} \left\| Sv \right\|_{L^2}^2 + \frac{1}{2}\tau^2 \left\| (\dt^2 \phi) v\right\|_{L^2}^2
+ \tau^2 \left\| (\dt^2 \phi)^{\frac{1}{2}}\dt \phi v \right\|_{L^2}^2 - \frac{(n-2)^2}{4} \left\| (\dt^2 \phi)^{\frac{1}{2}} v \right\|_{L^2}^2\\
&+\int\limits_{\p S^{n}_{+}\times \R} (\dt^2 \phi) \theta^{1-2s}_{n} (\nu \cdot \nabla_{S^{n}}\theta^{- \frac{1-2s}{2}}_{n}v) \theta^{- \frac{1-2s}{2}}_{n}v d\theta dt
 + ( (\dt^4 \phi) v, v)_{L^2},
\end{align*}
where $\nu = (0,...,0,-1)$ denotes the outer unit normal.
For sufficiently pseudoconvex $\phi$ the right hand side can be controlled by the commutator contributions of the Carleman estimate. In fact, this can even be strengthened by noticing that the right hand side remains controlled if it is multiplied by a factor of $c\tau$, with $c$ sufficiently small; for example $c \sim \frac{1}{2}$ would work. 
The boundary integral can be evaluated to yield
\begin{align*}
\int\limits_{\p S^{n}_{+}\times \R}  (\dt^2 \phi) (\theta^{1-2s}_{n}\nu \cdot \nabla_{S^{n}}\theta^{- \frac{1-2s}{2}}_{n}v) \theta^{- \frac{1-2s}{2}}_{n}v d\theta dt = \int\limits_{\p S^{n}_{+}\times \R} \theta_{n}^{-(1-2s)} (\dt^2 \phi) e^{2st} Vv^2 d\theta dt.
\end{align*}
The remaining boundary integral which originates from the commutator calculation is given by
\begin{align*}
&4\tau \int\limits_{\p S^{n}_{+}\times \R} \theta^{1-2s}_{n} (\nu \cdot \nabla_{S^{n}}\theta^{- \frac{1-2s}{2}}_{n}v) (\dt \phi) \theta^{- \frac{1-2s}{2}}_{n}\dt v d\theta dt \\
&+ 2\tau \int\limits_{\p S^{n}_{+}\times \R} \theta^{1-2s}_{n} (\nu \cdot \nabla_{S^{n}}\theta^{- \frac{1-2s}{2}}_{n}v) (\dt^2 \phi) \theta^{- \frac{1-2s}{2}}_{n} v d\theta dt \\
= & \ 4\tau \int\limits_{\p S^{n}_{+}\times \R} (\dt \phi) \theta_{n}^{-(1-2s)} e^{2st}V v \dt v d\theta dt
+ \ 2\tau \int\limits_{\p S^{n}_{+}\times \R} (\dt^2 \phi) \theta_{n}^{-(1-2s)}  e^{2st}V v^2 d\theta dt.
\end{align*}

Rewritten in terms of $u = \theta_{n}^{-\frac{1-2s}{2}} v$ the Carleman estimate reads
\begin{equation*}
\begin{split}
&c\tau \left\| (\dt^2 \phi)^{\frac{1}{2}} \theta^{\frac{1-2s}{2}}_{n} \dt u \right\|_{L^2}^2
+ c\tau \left\| (\dt^2 \phi)^{\frac{1}{2}} \theta^{\frac{1-2s}{2}}_{n} \nabla_{S^{n}} u \right\|_{L^2}^2
+ c\tau^3  \left\| \theta^{\frac{1-2s}{2}}_{n}(\dt^2 \phi)^{\frac{1}{2}}(\dt \phi) u \right\|_{L^2}^2\\
&+ \left\| S (\theta^{\frac{1-2s}{2}}_{n} u) \right\|_{L^2}^2
+ \tau^{-1} \left\| (\dt^2 + \theta^{-\frac{1-2s}{2}}_{n} \nabla_{S^{n}}\cdot \theta_{n}^{1-2s} \nabla_{S^n}  \theta^{- \frac{1-2s}{2}}_{n}) \theta^{ \frac{1-2s}{2}}_{n} u \right\|_{L^2}^2\\
& + 4\tau \int\limits_{\p S^{n}_{+}\times \R} (\dt \phi) e^{2st} V u \dt u d\theta dt
+ 2\tau \int\limits_{\p S^{n}_{+}\times \R} (\dt^2 \phi) e^{2st} V u \dt u d\theta dt\\
&+ c\tau \int\limits_{\p S^{n}_{+}\times \R}(\dt^2 \phi) e^{2st} V u \dt u d\theta dt\\
\leq & \ \left\| L_{\phi} u \right\|_{L^2}^2,
\end{split}
\end{equation*}
where
\begin{align*}
L_{\phi} = & \ \theta^{ \frac{1-2s}{2}}_{n}(\dt^2 + \tau^2 (\dt \phi)^2 - \frac{(n-2s)^2}{4} - 2\tau(\dt \phi)\dt - \tau \dt^2 \phi) \\
&+ \theta^{- \frac{1-2s}{2}}_{n} \nabla_{S^{n}}\cdot \theta_{n}^{1-2s} \nabla_{S^n}.
\end{align*}
Inserting the changes we made, i.e. $w = e^{\frac{n-2s}{2}t} u$, and recalling the changes in the volume element, yields a Carleman inequality which, up to the boundary contributions, is comparable to (\ref{eq:Carlflsymb}). \\

\textit{Step 2: Boundary Contributions under Differentiability Assumptions.}
In order to obtain a unique continuation statement as in Corollary \ref{cor:schp}, it remains to deal with the boundary contributions. We first present the argument under the differentiability assumption 
\begin{align*}
V_{2}\in C^{1}(\R^n\setminus\{0\}), \ |y\cdot \nabla V_{2}| \leq c|y|^{-2s+\epsilon},
\end{align*}
independently of the value of $s\in(0,1)$.
In order to estimate the unsigned boundary contributions, we consider the respective expressions in $u$-coordinates. Starting with the scaling-critical Hardy potentials, we have to bound
\begin{align*}
& 4\tau \int\limits_{\p S^{n}_{+}\times \R} (\dt \phi) e^{2st} V_{1} u \dt u d\theta dt
+ 2\tau \int\limits_{\p S^{n}_{+}\times \R} (\dt^2 \phi) e^{2st} V_{1} u^2 d\theta dt\\
&+ c\tau \int\limits_{\p S^{n}_{+}\times \R}(\dt^2 \phi) e^{2st} V_{1} u^2 d\theta dt,
\end{align*}
i.e. we have to control the boundary integrals involving the potential $V_{1}=e^{-2st}h(\theta)$. By an integration by parts in $t$, we obtain that most contributions drop out. Indeed, the only non-vanishing term is given by
\begin{align*}
c\tau \int\limits_{\p S^{n}_{+}\times \R}\dt^2 \phi h(\theta) u^2 d\theta dt.
\end{align*}
This can be controlled via the interpolation inequality (\ref{eq:trint}): 
\begin{align*}
\tau \int\limits_{\p S^{n}_{+}\times \R}(\dt^2 \phi) h( \theta) u^2 d\theta dt \lesssim & \
\tau^{1-s} \left\| (\dt^2 \phi)^{\frac{1}{2}} \theta^{\frac{1-2s}{2}}_{n} \nabla_{S^{n}} u \right\|_{L^2}^2 + \tau^{2-s} \left\|  \theta_{n}^{\frac{1-2s}{2}} (\dt^2 \phi)^{\frac{1}{2}} u \right\|_{L^2}^2.
\end{align*}
All the remaining boundary contributions involve the potential $V_{2}$ which has subcritical growth at zero. Due to the form of $\phi$, it suffices to deduce control of the term
\begin{align*}
4\tau \int\limits_{\p S^{n}_{+}\times \R} \dt \phi e^{2st} V_{2} u \dt u d\theta dt.
\end{align*}
Integrating by parts in $t$, using the subcriticality of $V_{2}$ and the properties of $\phi$, it suffices to bound
\begin{align*}
C\tau \int\limits_{\p S^{n}_{+}\times \R} e^{\epsilon t}  u^2 d\theta dt.
\end{align*}
As the condition on the support of $u$ implies that $t< 0$, this can once more be achieved via the interpolation inequality (\ref{eq:trint}). \\

\textit{Step 3: Scaling-Critical Potentials for $s\geq \frac{1}{2}$.} Last but not least, we indicate how to prove the desired Carleman estimate in cases involving scaling-critical potentials without the differentiability assumptions from the previous step. While the scaling-critical potential, $V_{1}$, can be treated as the potentials in step 2, the subcritical part of the potential, $V_{2}$, cannot be differentiated. Thus, a direct estimate of this boundary term is needed. This is achieved via interpolation and regularity estimates for the operators. We only present the argument for the most critical boundary contribution which (after localization to a small radius $0<r\ll 1$) in Cartesian coordinates reads:
\begin{align*}
\tau \int\limits_{B_{r}^{+} \cap \{y_{n+1}=0\}}|V_2||w(y\cdot \nabla w)|dy.
\end{align*}
We estimate
\begin{equation}
\label{eq:boundary}
\begin{split}
\tau \int\limits_{B_{r}^+ \cap\{y_{n+1}=0\}}|V_2||w(y\cdot \nabla w)|dy \lesssim &\ \tau^2 \int\limits_{B_{r}^+\cap \{y_{n+1}=0\}}|y|^{-2s+\epsilon}w^2 dy \\ & \ + \int\limits_{B_{r}^+\cap \{y_{n+1}=0\}}|y|^{2-2s+\epsilon}|\nabla w|^2 dy.
\end{split}
\end{equation}
The first term can directly be interpolated between controlled quantities:
\begin{align*}
\tau^2 \int\limits_{B_{r}^+\cap \{y_{n+1}=0\}}|y|^{-2s+\epsilon}w^2 dy \lesssim \tau \left\| |y|^{\frac{\epsilon}{2}} y_{n+1}^{\frac{1-2s}{2}} \nabla w \right\|_{L^2(B_{r}^+)} + \tau^{3}\left\| |y|^{\frac{\epsilon}{2}-1} y_{n+1}^{\frac{1-2s}{2}} w \right\|_{L^2(B_{r}^+)}. 
\end{align*}
Here we have used $s\in [\frac{1}{2},1)$. The second quantity in (\ref{eq:boundary}) has to be controlled using elliptic estimates. Due to $L^2$ estimates for the respective degenerate elliptic Neumann boundary value problem (which one can for example deduce by carrying out a tangential Fourier transform), we have
\begin{align*}
&  \int\limits_{B_{r}^{+}\cap\{y_{n+1}=0\}} |y|^{2-2s +\epsilon}|\nabla w|^2 dy \\
\lesssim & \ \tau^{-1}\left\| |y|^{1+\frac{\epsilon}{2}}(y_{n+1}^{\frac{2s-1}{2}} \nabla \cdot y_{n+1}^{1-2s} \nabla  + \tau^2 |\nabla \phi|^2 y^{\frac{1-2s}{2}}_{n+1} )w  \right\|_{L^2(B_{2r}^{+})}^2\\
& + \tau^{2}\left\||y|^{-1 + \frac{\epsilon}{2}} y^{\frac{1-2s}{2}}_{n+1} w   \right\|_{L^2(B_{2r}^{+})}^2 
+ \tau \left\| |y|^{\frac{\epsilon}{2}}y_{n+1}^{\frac{1-2s}{2}}\nabla w \right\|_{L^2(B_{2r}^+)}^2 \\
&+ \tau^{2-4s}\left\| |y|^{s+\frac{\epsilon}{2}} V w \right\|_{L^2(B_{2r}^+ \cap \{y_{n+1}=0\})}.
\end{align*}
As all the right hand side terms are controlled by the bulk terms of the Carleman inequality, we can also control perturbations of critically scaling potentials without imposing differentiability constraints on the perturbation.
\end{proof}

\section[Doubling and the WUCP]{Doubling Estimates and Reduction to the Weak Unique Continuation Property}
\label{sec:dewucp}
\subsection{Doubling Inequalities}
\label{sec:doubl}
In this section we deduce a doubling inequality which plays a decisive role in the compactness argument reducing the strong to the weak unique continuation property.
We have

\begin{prop}
\label{prop:doubl}
Let $s\in(0,1)$ and let $w: \R^{n+1} \rightarrow \R$, $w\in H^{1}_{loc}(y_{n+1}^{1-2s}dy,\R^{n+1})\cap H^{2}_{loc}(y_{n+1}^{1-2s}dy,\R^{n+1})$, be a solution of
\begin{align*}
\nabla \cdot y_{n+1}^{1-2s} \nabla  w & = 0 \mbox{ in } \R^{n+1}_{+}, \\
-\lim\limits_{y_{n+1}\rightarrow 0} y^{1-2s}_{n+1} \p_{n+1} w &= Vw \mbox{ on } \R^n,
\end{align*}
with $V= V_{1} + V_{2}$,
\begin{align*}
V_{1}(y) = |y|^{-2s}h\left( \frac{y}{|y|} \right), \ \ h\in L^{\infty}, \ \ |V_{2}(y)| \leq c|y|^{-2s+\epsilon}. 
\end{align*}
For $s < \frac{1}{2}$, we additionally require that one of the following assumptions is satisfied:
\begin{itemize}
\item the potential $V_2$ satisfies
$
V_{2}\in C^{1}(\R^n\setminus \{0\}) \mbox{ and }
|y\cdot \nabla V_{2}| \lesssim c|y|^{-2s+\epsilon},
$
\item $s\in [\frac{1}{4}, \frac{1}{2})$ and $V_1 \equiv 0$.
\end{itemize}
Then the doubling property holds, i.e. there exists a constant $C>0$ and a constant $R$ 
such that for all $0<r<R$ we have
\begin{align*}
\left\| y^{\frac{1-2s}{2}}_{n+1} w \right\|_{L^2(B_{2r}^+(0))} \leq C \left\| y^{\frac{1-2s}{2}}_{n+1} w \right\|_{L^2(B_{r}^+(0))}.
\end{align*}
\end{prop}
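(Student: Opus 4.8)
The plan is to derive the doubling inequality from the Carleman estimate of Proposition \ref{prop:KT1b} via a standard three-sphere/frequency argument, combined with the boundary-term analysis already carried out in the proof of Corollary \ref{cor:schp}. First I would fix a small radius $R>0$ (to be chosen below) and, for $0<r<R$, introduce a smooth radial cut-off $\eta_{\delta,\rho}$ supported in the annulus $\{\delta/2 \le |y| \le 2\rho\}$ and equal to one on $\{\delta \le |y| \le \rho\}$, where eventually $\delta\to 0$ and $\rho$ is a fixed multiple of $r$ (say $\rho = 4r$). Applying the Carleman estimate to $\bar w = \eta_{\delta,\rho}\tilde w$, the right-hand side $f$ consists only of terms supported on the two annuli $\{\delta/2 \le |y| \le \delta\}$ and $\{\rho \le |y| \le 2\rho\}$ where $\eta_{\delta,\rho}$ has nonzero derivatives, plus the boundary term $h = -Vw$. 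The boundary contribution is handled exactly as in the proof of Corollary \ref{cor:schp}: the scaling-critical part $V_1$ is absorbed on the left after integration by parts (or directly, using that the left-hand side carries $\tau^s|y|^{-s}$ weights), and the subcritical part $V_2$ is absorbed using the logarithmic gain in the weight together with the hypotheses on $V_2$ (the $C^1$ condition, or $s\in[1/4,1/2)$ with $V_1\equiv 0$, or $s\ge 1/4$ subcritical). Here, unlike in the SUCP proof, there is no \emph{a priori} infinite-order vanishing, so the $\delta$-annulus terms genuinely survive and must be kept.

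Next I would let $\delta\to 0$: since $w\in H^1_{loc}(y_{n+1}^{1-2s}dy)$ and (by the interpolation inequality of Step 1 of the Corollary \ref{cor:schp} proof, or elliptic estimates) the gradient is controlled near the origin, the innermost error terms vanish in the limit. One is then left with an inequality of the schematic form
\begin{align*}
\tau^{1/2}\left\| e^{\tau\phi}(1+\ln|y|^2)^{-1/2}y_{n+1}^{\frac{1-2s}{2}}|y|^{-1}w\right\|_{L^2(B_\rho^+)}
\lesssim \tau^{-1/2}\left\| e^{\tau\phi}|y|y_{n+1}^{\frac{2s-1}{2}}\nabla\cdot y_{n+1}^{1-2s}\nabla \bar w\right\|_{L^2(A_\rho)} + (\text{absorbed boundary}),
\end{align*}
where $A_\rho = \{\rho \le |y| \le 2\rho\}$. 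Since $\phi$ is a decreasing function of $|y|$, $e^{\tau\phi}$ is largest at small $|y|$: I would restrict the left integral to $B_{r}^+(0)$, where $\phi(y) \ge \phi(r)$ (for $|y|\le r$), and bound the right integral using $\phi(y) \le \phi(\rho)$ on $A_\rho$. Using also the elliptic interpolation of Step 1 to trade the $\nabla\cdot y_{n+1}^{1-2s}\nabla\bar w$ term on $A_\rho$ against $\|y_{n+1}^{\frac{1-2s}{2}}w\|_{L^2}$ on a slightly larger annulus $\{\rho/2 \le |y|\le 4\rho\}$, one obtains, after absorbing $\tau$-powers and the $(1+\ln|y|^2)^{-1/2}$ loss into the exponential (this is where one uses that $\phi$ is \emph{strictly} convex so that $\phi(r) - \phi(\rho)$ grows like a positive multiple of $\ln(\rho/r)$),
\begin{align*}
e^{\tau\phi(r)}\left\| y_{n+1}^{\frac{1-2s}{2}}w\right\|_{L^2(B_r^+)} \lesssim e^{\tau\phi(4\rho)}\left\| y_{n+1}^{\frac{1-2s}{2}}w\right\|_{L^2(B_{8r}^+ \setminus B_{r}^+)} + C_r\,\tau^{-N}
\end{align*}
for some fixed power, or more precisely a three-ball inequality comparing $\|w\|_{L^2(B_r^+)}$, $\|w\|_{L^2(B_{2r}^+)}$ and $\|w\|_{L^2(B_{8r}^+)}$. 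Optimizing over $\tau \ge \tau_0$ (the usual argument: if $\|w\|_{L^2(B_{8r}^+)} \le e^{K}\|w\|_{L^2(B_{r}^+)}$ fails one derives a contradiction, otherwise the interpolation exponent is controlled) yields the three-ball inequality with a uniform constant.

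Finally, the doubling inequality $\|y_{n+1}^{\frac{1-2s}{2}}w\|_{L^2(B_{2r}^+)} \le C\|y_{n+1}^{\frac{1-2s}{2}}w\|_{L^2(B_r^+)}$ follows from the three-ball inequality by a standard chaining/iteration over dyadic scales, together with the fact that $w$ is not identically zero on any ball near the origin unless $w\equiv 0$ (which is the trivial case) — one uses the three-ball inequality at scales $r, 2r, 4r$ and rearranges. The main obstacle, I expect, is the bookkeeping of the boundary terms while simultaneously carrying the cut-off errors at \emph{two} scales ($\delta$ and $\rho$), since the SUCP argument only had to deal with one live scale; in particular one must check that the logarithmic weight $(1+\ln|y|^2)^{-1/2}$ and the various $\tau$-powers can all be absorbed so that the exponent $\phi(r) - \phi(4\rho)$ dominates, and that the elliptic interpolation near $|y| = \rho$ does not destroy the gain. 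A secondary subtlety is that in the regime $s\in(0,1/4)$ (and the scale-invariant case) one must use the \emph{modified} Carleman estimate from the second proof of Corollary \ref{cor:schp} rather than Proposition \ref{prop:KT1b}, so the argument should be phrased to accommodate either estimate.
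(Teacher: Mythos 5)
Your overall architecture --- cut off at an inner scale $\delta$ and an outer scale, apply the Carleman estimate of Proposition \ref{prop:KT1b}, absorb the boundary terms as in Corollary \ref{cor:schp}, and compare weights via the monotonicity of $\phi$ --- is the right one, but there is a genuine gap at the step where you send $\delta\to 0$ and claim the inner cut-off errors vanish. In Proposition \ref{prop:doubl} there is no a priori vanishing of $w$ at the origin, and the Carleman weight $e^{\tau\phi}\sim|y|^{-\tau}$ blows up there: the inner error term is of size $\tau^{-1/2}e^{\tau\phi(\delta)}\,\delta^{-1}\bigl\|y_{n+1}^{(1-2s)/2}w\bigr\|_{L^2(\{\delta/2\le|y|\le\delta\})}\sim \delta^{-\tau-1}\bigl\|y_{n+1}^{(1-2s)/2}w\bigr\|_{L^2(\{\delta/2\le|y|\le\delta\})}$, which for a solution vanishing only to finite order diverges as $\delta\to0$ once $\tau$ is large. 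Membership in $H^1_{loc}(y_{n+1}^{1-2s}dy)$ gives no decay that could compensate the singular weight; the ``interpolation inequality of Step 1'' controls gradients by $L^2$ norms on the same annulus, not the size of $w$ itself near $0$. So the limit you take is not available --- and this is exactly why the doubling inequality cannot be obtained from the SUCP argument by simply ``keeping the $\delta$-terms''.

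The paper avoids this by never sending $\delta\to0$: the inner scale $\delta$ \emph{is} the doubling radius. One applies the Carleman estimate to $\eta w$ with $\eta$ supported in the annulus $(\delta/2,\tilde R)$, $\tilde R\sim1$ fixed, and invokes the strengthened lower bound of Remark \ref{rmk:anti} (the antisymmetric part combined with Poincar\'e's inequality), which produces an additional factor $\tau^2\delta^{-2}$ on the annulus $(\delta,3\delta)$ where the inner cut-off derivative is active. This extra power of $\tau$ allows the inner cut-off error $\delta^{-2}e^{2\tau\phi(\delta/2)}\|y_{n+1}^{(1-2s)/2}w\|^2_{L^2(B_{3\delta/2})}$ to be \emph{compared with} (rather than made negligible against) the left-hand-side term at scale $3\delta$, while the outer cut-off error is absorbed by fixing $\tau$ large depending on the ratio of the norms of $w$ at the $\tilde R$-scale (whence the $w$-dependence of the doubling constant noted in Remark \ref{rmk:doubl1}). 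Since $\phi(\delta/2)-\phi(3\delta)$ is bounded uniformly in $\delta$ by the structure of $\phi$, the resulting constant is uniform over $0<\delta\ll\tilde R$. If you reorganize your argument along these lines --- keeping $\delta$ alive as the doubling radius and using Remark \ref{rmk:anti} instead of the $\delta\to0$ limit --- the remaining ingredients of your outline (boundary absorption, Cacciopolli on the outer annulus, weight comparison) do go through; the final chaining/three-balls step you describe is then only needed for the off-origin generalization of Remark \ref{rmk:doubl1}, not for the stated proposition.
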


Before commencing with the proof of the doubling property, a few remarks are in order:
\begin{rmk}
\label{rmk:doubl1}
\begin{itemize}
\item We note that the doubling property can be shown for any $R>0$. However, in order to obtain a uniform dependence of $C$ on $r$, this parameter has to be fixed. 
\item We point out that the constant $C>0$ depends on the function $w$. 
\item The doubling property is neither restricted to balls centered at the origin nor to balls centered at the boundary of $\R^{n+1}_+$. Under the conditions of Proposition \ref{prop:doubl} the conclusion can be formulated as the existence of a constant $C>0$ and a constant $R$ such that for all $0<r<R$ and for all $y_0 \in B_{R}(z)$, $z\in \R^{n+1}_+$, we have
\begin{align*}
\left\| y^{\frac{1-2s}{2}}_{n+1} w \right\|_{L^2(B_{2r}(y_0)\cap \R^{n+1}_+)} \leq C \left\| y^{\frac{1-2s}{2}}_{n+1} w \right\|_{L^2(B_{r}(y_0)\cap \R^{n+1}_+)}.
\end{align*}
In this case $C=C(R,z,w)$. We comment on the proof of this more general statement after the proof of Proposition \ref{prop:doubl}, c.f. Remark \ref{rmk:doubl}.
\end{itemize}
\end{rmk}

\begin{proof} 
Without loss of generality, we restrict our attention to sufficiently small radii and to balls centered at the origin. Via a covering argument, it is possible to recover the statement for larger balls, c.f. Remark \ref{rmk:doubl}. In order to bound the gradient contributions which will arise in the application of the Carleman inequality (\ref{eq:Carlflsymb}), we recall the following elliptic gradient/ Cacciopolli estimate:
Let $\psi$ be a cut-off function supported in an annulus given by $0<\frac{r_0}{2} \leq|y| \leq  2r_{1}<\infty$, which we will also denote by $(\frac{r_0}{2},  2r_{1})$ in the sequel. Then,
\begin{equation}
\begin{split}
\label{eq:ellreg}
\left\| y_{n+1}^{\frac{1-2s}{2}} \nabla (w\psi) \right\|_{L^2(\frac{r_{0}}{2},2r_{1})}^2
\lesssim & \ r_{0}^{-2}\left\| y_{n+1}^{\frac{1-2s}{2}}  w \right\|_{L^2(r_{0}/2,2r_{1})}^2 \\& + \int\limits_{ (\frac{r_{0}}{2},2r_{1}) \cap \{y_{n+1}=0\}} \psi w \lim\limits_{y_{n+1}\rightarrow 0} y^{1-2s}_{n+1} \p_{n+1}( \psi w) dy,
\end{split}
\end{equation}
with $0<r_{0}<r_{1}<\infty$. If the boundary conditions are of the generalized Neumann type as in our assumptions, it becomes possible to absorb these into the left-hand side bulk gradient term, if they are sufficiently small, i.e. if $V$ is either subcritical or if it is a small scaling-critical potential. In the case of large scaling-critical potentials it is still possible to absorb these contributions, if the vanishing rate in the tangential direction is higher than in the normal direction. By virtue of Corollary \ref{cor:schp} it is always possible to reduce to this situation.\\
Keeping this in mind, we prepare for the application of the Carleman inequality from Proposition \ref{prop:KT1b}: Let $\eta$ be a radial cut-off function, which is equal to one on the annulus $|y|\in (\delta, \tilde{R}/2)$ and vanishes outside of the annulus $|y|\in (\delta/2, \tilde{R})$. 
Inserting $\eta w$ into the Carleman estimate (in combination with Remark \ref{rmk:anti}), using the elliptic estimate as well as the explicit form of the boundary contribution, we obtain
\begin{align*}
&\delta^{-2} \tau \left\| e^{\tau \phi} y^{\frac{1-2s}{2}}_{n+1} w \right\|_{L^2(\delta, 3\delta)}^2 \\
&+ \tau^2 \tilde{R}^{-2} \left\| e^{\tau \phi} (1+\ln(|y|)^{2})^{-\frac{1}{2}}y^{\frac{1-2s}{2}}_{n+1} w \right\|_{L^2(\tilde{R}/8, \tilde{R}/4)}^2\\
\lesssim & \ \delta^{-2} \left\| e^{\tau \phi} y^{\frac{1-2s}{2}}_{n+1} w \right\|_{L^2(\delta /2, \frac{3\delta}{2})}^2 + \tilde{R}^{-2}\left\| e^{\tau \phi} y^{\frac{1-2s}{2}}_{n+1} w \right\|_{L^2(\tilde{R}/2,2\tilde{R})}^2.
\end{align*}
Here the boundary contributions were absorbed into the bulk contributions in the way indicated above.
Setting $\tilde{R}\sim 1$, we estimate further 
\begin{align*}
e^{\tau \phi(3 \delta )}\left\|  y^{\frac{1-2s}{2}}_{n+1} w \right\|_{L^2(B_{3\delta})}^2 + e^{\tau\phi(\tilde{R}/4)} \delta^2 \tau^2 \left\|   y^{\frac{1-2s}{2}}_{n+1}w \right\|_{L^2(\tilde{R}/8,\tilde{R}/4)}^2\\
\lesssim \delta^2 e^{\tau \phi(\tilde{R}/2)} \left\|   y^{\frac{1-2s}{2}}_{n+1} w\right\|_{L^2(B_{2\tilde{R}})}^2 + e^{\tau \phi(\frac{\delta}{2})} \left\|   y^{\frac{1-2s}{2}}_{n+1} w\right\|_{L^2(B_{3 \delta / 2})}^2.
\end{align*}
Now, we choose $\tau>0$ such that $\delta^2 e^{\tau \phi(\tilde{R}/2)} \left\|   y^{\frac{1-2s}{2}}_{n+1} w\right\|_{L^2(B_{2\tilde{R}})}^2 $ on the right hand side can be absorbed in the term $e^{\tau\phi(\tilde{R}/4)}\tau^2\delta^2 \left\|   y^{\frac{1-2s}{2}}_{n+1}w \right\|_{L^2(\tilde{R}/8,\tilde{R}/4)}^2$ on the left hand side. A possible choice of $\tau$, for example, is
\begin{align*}
\tau \sim \frac{1}{\phi(\tilde{R}/2)- \phi(\tilde{R}/4)} \ln \left(  \frac{ \left\| y^{\frac{1-2s}{2}}_{n+1} w \right\|_{L^2(\tilde{R}/8,\tilde{R}/4)} }{  \left\|y^{\frac{1-2s}{2}}_{n+1} w  \right\|_{L^2(B_{2\tilde{R}})} } \right).
\end{align*}
This implies the doubling inequality for $r=\delta$ with a constant which, by virtue of the structure of $\phi$, does not depend on $\delta$. Since $0<\delta \ll \tilde{R} $ was arbitrary, this implies the doubling property. 
\end{proof}

\begin{rmk}
\label{rmk:doubl}
The more general claim of Remark \ref{rmk:doubl1} follows from two ingredients: a three balls inequality and an overlapping chains argument. The three balls inequality compares the value of $w$ on a ball of size $r$ with balls of size $\frac{r}{2}$ and $2r$:
\begin{align*}
\left\|y_{n+1}^{\frac{1-2s}{2}} w \right\|_{L^2(B_{r}(y_0)\cap \R^{n+1}_+)} \leq C \left\| y_{n+1}^{\frac{1-2s}{2}}w\right\|_{L^2(B_{\frac{r}{2}}(y_0)\cap \R^{n+1}_+)}^{\alpha} \left\| y_{n+1}^{\frac{1-2s}{2}} w \right\|_{L^2(B_{2r}(y_0)\cap \R^{n+1}_+)}^{1-\alpha},
\end{align*}
for sufficiently small radii $r>0$. This inequality allows to compare the values of $w$ along a chain of overlapping balls. Thus, it is possible to deduce an estimate of the form
\begin{align*}
\left\|y_{n+1}^{\frac{1-2s}{2}} w \right\|_{L^2(B_{r}(y_0))} \geq C_{r}\left\| y_{n+1}^{\frac{1-2s}{2}}w\right\|_{L^2(B_{2R}(z))}.
\end{align*}
Hence, the norms of $w$ on smaller balls can be related to the norms on the whole ball $B_{2R}(z)$. This then allows to deduce the stronger doubling inequality of Remark \ref{rmk:doubl1} as well as the reduction to sufficiently small balls in the proof of Proposition \ref{prop:doubl}. For further details we refer to the articles on quantitative unique continuation by Bakri \cite{Bakri}.
\end{rmk}

\subsection[Reduction to the WUCP]{Reduction to the Weak Unique Continuation Problem}
\label{sec:reduction}

In this section we explain how the previous estimates can be combined in order to reduce the strong unique continuation problem to its weak analogue. The key argument relies on a blow-up procedure.

\begin{prop}[SUCP II]
Let $s\in(0,1)$ and let $w: \R^{n+1} \rightarrow \R$, \\
$w \in H^{1}_{loc}(y_{n+1}^{1-2s}dy,\R^{n+1}_{+})$, be a solution of
\begin{align*}
\nabla \cdot  y_{n+1}^{1-2s} \nabla  w & = 0 \mbox{ in } \R^{n+1}_{+}, \\
-\lim\limits_{y_{n+1}\rightarrow 0} y^{1-2s}_{n+1} \p_{n+1} w &= V w \mbox{ on } \R^n,
\end{align*}
with $V= V_{1} + V_{2}$,
\begin{align*}
V_{1}(y) = |y|^{-2s}h\left( \frac{y}{|y|} \right),\ \ h\in L^{\infty},\ \ |V_{2}(y)| \leq c|y|^{-2s+\epsilon}. 
\end{align*}
For $s < \frac{1}{2}$, we additionally require that one of the following assumptions is satisfied:
\begin{itemize}
\item the potential $V_2$ satisfies
$
V_{2}\in C^{1}(\R^n\setminus \{0\}) \mbox{ and }
|y\cdot \nabla V_{2}| \lesssim c|y|^{-2s+\epsilon},
$
\item $s\in [\frac{1}{4}, \frac{1}{2})$ and $V_1 \equiv 0$.
\end{itemize}
Suppose that $w(\cdot, 0)$ vanishes of infinite order at $0$. Then 
\begin{align*}
w \equiv 0.
\end{align*}
\end{prop}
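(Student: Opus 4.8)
The strategy is to reduce the statement, via a blow-up, to the weak unique continuation principle of Proposition \ref{prop:wuc}. One may organize the argument in two cases. If $w$ — regarded as the Caffarelli--Silvestre extension of its own boundary trace — vanishes of infinite order at $0$ in \emph{both} the tangential and the normal directions, then Corollary \ref{cor:schp} applies at once and yields $w\equiv 0$. It therefore suffices to treat the case in which $w$ vanishes of at most finite order in the normal direction and to derive a contradiction from the additional assumption $w\not\equiv 0$.

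Assume then $w\not\equiv 0$. Since $w(\cdot,0)$ vanishes of infinite order at $0$ (so that, in the notation of the proof of Proposition \ref{prop:doubl}, the tangential vanishing rate exceeds the normal one and the boundary term in the Caccioppoli estimate (\ref{eq:ellreg}) can be absorbed), Proposition \ref{prop:doubl} applies, interior elliptic regularity providing the requisite regularity of $w$. Writing $\rho(r):=\|y_{n+1}^{\frac{1-2s}{2}}w\|_{L^2(B_r^+(0))}$, the doubling inequality $\rho(2r)\le C\rho(r)$ gives, on the one hand, $\rho(r)>0$ for all small $r$ together with a polynomial lower bound $\rho(r)\gtrsim r^{N_0}$, and on the other hand $\rho(Rr)\le C(R)\rho(r)$ for every fixed $R$ and all small $r$; together with monotonicity of $\rho$ one also obtains $\rho(Rr)\ge c(R)\rho(r)$. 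I then set
\begin{align*}
w_r(y):=\frac{r^{\frac{n+2-2s}{2}}}{\rho(r)}\,w(ry),
\end{align*}
so that $\|y_{n+1}^{\frac{1-2s}{2}}w_r\|_{L^2(B_R^+)}=\rho(Rr)/\rho(r)\in[c(R),C(R)]$ uniformly in small $r$, in particular $\|y_{n+1}^{\frac{1-2s}{2}}w_r\|_{L^2(B_1^+)}=1$. By scale invariance $w_r$ solves $\nabla\cdot y_{n+1}^{1-2s}\nabla w_r=0$ in $\R^{n+1}_+$ with $-\lim_{y_{n+1}\to 0}y_{n+1}^{1-2s}\p_{n+1}w_r=(V_1+r^{2s}V_2(r\,\cdot))w_r$, where $V_1$ is unchanged and $|r^{2s}V_2(r\,\cdot)|\lesssim r^{\epsilon}|y|^{-2s+\epsilon}$. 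The Caccioppoli estimate (\ref{eq:ellreg}), with the boundary term absorbed uniformly along the sequence, gives uniform $H^1(y_{n+1}^{1-2s}dy)$ bounds on compact subsets of $\R^{n+1}_+$; the compact weighted Sobolev embedding and the compactness of the trace $H^1(y_{n+1}^{1-2s}dy)\to L^2$ on $\{y_{n+1}=0\}$ then yield, along a subsequence and simultaneously on every ball, $w_{r_k}\to w_0$ strongly in $L^2_{loc}(y_{n+1}^{1-2s}dy)$ and on the boundary, and weakly in $H^1_{loc}(y_{n+1}^{1-2s}dy)$.

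It remains to identify $w_0$ and apply Proposition \ref{prop:wuc}. From $\|y_{n+1}^{\frac{1-2s}{2}}w_r\|_{L^2(B_1^+)}=1$ we get $w_0\not\equiv 0$. For the boundary trace, a change of variables gives $\int_{B_R}w_r(y',0)^2\,dy' = r^{2-2s}\rho(r)^{-2}\int_{B_{Rr}}w(z',0)^2\,dz'$; since $\rho(r)^{-2}$ is polynomially bounded while $\int_{B_{Rr}}w(\cdot,0)^2=o((Rr)^m)$ for every $m$, this tends to $0$, so $w_0(\cdot,0)\equiv 0$. Passing to the limit in the weak formulation tested against $\varphi\in C_c^\infty(\R^{n+1})$: the bulk term converges by weak $H^1$ convergence; the $r^{2s}V_2(r\,\cdot)$ contribution vanishes because its multiplier norm $H^s\to H^{-s}$ is $O(r^{\epsilon})$ while $w_r(\cdot,0)$ stays bounded in $H^s$; and the $V_1$ contribution tends to $0$ because $V_1\varphi(\cdot,0)\in H^{-s}$ (by the Hardy, i.e.\ Herbst, inequality) while $w_r(\cdot,0)\rightharpoonup 0$ in $H^s$. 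Hence $w_0$ solves $\nabla\cdot y_{n+1}^{1-2s}\nabla w_0=0$ in $\R^{n+1}_+$ with $\lim_{y_{n+1}\to 0}y_{n+1}^{1-2s}\p_{n+1}w_0=0$ and $w_0(\cdot,0)\equiv 0$. Interior and boundary regularity for this homogeneous degenerate problem give $w_0\in L^\infty_{loc}$, so Proposition \ref{prop:wuc} forces $w_0\equiv 0$ on $B_1^+$, contradicting $\|y_{n+1}^{\frac{1-2s}{2}}w_0\|_{L^2(B_1^+)}=1$. Therefore $w\equiv 0$.

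The principal difficulty is the last identification step, specifically the treatment of the scaling-critical potential $V_1$: unlike the subcritical part $V_2$, it is invariant under the blow-up scaling and does not disappear, so one must show it contributes nothing in the limit because the rescaled traces converge to zero — and this requires the Hardy-type trace inequality to absorb the singular factor $|y'|^{-2s}$. In the borderline configuration $n=1$, $s\ge\frac12$, where that inequality degenerates, one substitutes the refined one-dimensional estimates of Section \ref{sec:1D}. A secondary point, used throughout, is that the Caccioppoli boundary term must be absorbed \emph{uniformly} along the blow-up sequence; this is where the strict gap between the infinite tangential and the finite normal order of vanishing is exploited, and is the reason the preliminary reduction via Corollary \ref{cor:schp} is needed.
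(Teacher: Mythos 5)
Your proposal is correct and follows essentially the same route as the paper: reduce to the case of finite normal vanishing via Corollary \ref{cor:schp}, rescale with the same normalization, use the doubling inequality and the Caccioppoli estimate to extract a compact blow-up limit, and conclude with Proposition \ref{prop:wuc}. Your identification of the limit equation (in particular the argument that the scale-invariant $V_1$ contribution vanishes because the rescaled traces tend to zero in $H^s$ while $V_1\varphi\in H^{-s}$ by the Hardy inequality) is spelled out in more detail than in the paper, which asserts this step without elaboration, but the underlying argument is the same.
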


\begin{proof}
Without loss of generality we may assume that $w$ does not vanish of infinite order in both the normal and tangential directions. We consider a rescaled version of $w$: Let $0<\sigma \ll 1$. We define 
\begin{align*}
w_{\sigma}(y) = \frac{w(\sigma y)}{\sigma^{-\frac{n+1}{2}} \sigma^{-\frac{1-2s}{2}} \left\| y_{n+1}^{\frac{1-2s}{2}} w\right\|_{L^2(B_{\sigma}^{+}(0))}}.
\end{align*}
Using the gradient estimate, we obtain 
\begin{align*}
\left\| y_{n+1}^{\frac{1-2s}{2}} \nabla w \right\|_{L^2(B_{\sigma}^{+})}^2
\lesssim &\ \frac{1}{\sigma^2}\left\| y_{n+1}^{\frac{1-2s}{2}}  w \right\|_{L^2(B_{2\sigma}^{+})}^2 + \int\limits_{ B_{2\sigma}^{+} \cap \{y_{n+1}=0\}} \eta^2 y^{1-2s}_{n+1} w \p_{n+1} w dy,\\
\lesssim &\ \frac{1}{\sigma^2}\left\| y_{n+1}^{\frac{1-2s}{2}}  w \right\|_{L^2(B_{\sigma}^{+})}^2 ,
\end{align*}
where the last line is a consequence of the doubling inequality as well as the finite order of vanishing of $w$ in the normal direction, c.f. Remark \ref{rmk:vani}: Due to the infinite order of vanishing, the boundary contributions can be absorbed in the other terms for sufficiently small $\sigma$. In effect, we have
\begin{itemize}
\item $\left\| y^{\frac{1-2s}{2}}_{n+1}w_{\sigma}\right\|_{L^2(B_{1}^{+})} = 1$,
\item $\left\| y^{\frac{1-2s}{2}}_{n+1} \nabla w_{\sigma}\right\|_{L^2(B_{1}^{+})} \leq C$.
\end{itemize} 
Hence, (along a not relabeled subsequence) we may pass to the limit $\sigma \rightarrow 0$ and obtain $w_{\sigma} \rightarrow w_{0}$ strongly in $L^{2}$ via Rellich's compactness theorem. As a consequence of the infinite order of vanishing (and the finite order of vanishing in the normal direction), $w_{\sigma}$ converges to zero on the boundary. Furthermore, $w_{0}$ weakly solves
\begin{align*}
\nabla \cdot y_{n+1}^{1-2s}\nabla w_{0}  & = 0 \mbox{ in } B_{1}^+(0),\\
\lim\limits_{y_{n+1}\rightarrow 0} y_{n+1}^{1-2s} \partial_{n+1} w_{0} & = 0 \mbox{ on } B_{1}^+(0)\cap \{y_{n+1}=0\}.
\end{align*}
Due to the weak unique continuation principle (c.f. Proposition \ref{prop:wuc}), $w_{0}$ has to vanish (which contradicts $\left\| y^{\frac{1-2s}{2}}_{n+1}w_{0}\right\|_{L^2(B_{1}^{+})} = 1$).
\end{proof}

\section{The One-Dimensional Situation}
\label{sec:1D}
In the case of \emph{one-dimensional} fractional Schr\"odinger equations it is possible to deduce stronger estimates than in the general case since the eigenvalues of the spherical contribution of the symmetric part of the operator satisfy a spectral gap condition. Moreover, they can be computed explicitly. For a fixed $s\in(0,1)$ the one-dimensionality of the problem reduces the eigenvalue equation to a one-parameter family of odes:
\begin{equation}
\begin{split}
\label{eq:eigenv}
\left(\frac{\partial^2}{\partial^2 \va} + \frac{(1-2s)(1+2s)}{4} \frac{1}{\sin(\va)^2} - \frac{(1-2s)^2}{4} \right) v &= \lambda v  \mbox{ in }  [0,\pi],\\
\lim\limits_{\sin(\va)\rightarrow 0} \sin(\va)^{1-2s} \dv \sin(\va)^{\frac{2s-1}{2}}v&=0 \mbox{ on }\{0,\pi\}.
\end{split}
\end{equation}
This can be reduced to generalized Legendre equations which allow to determine the admissible values of $\lambda$:

\begin{lem}
\label{lem:eigenv}
Let $s\in(0,1)$. Then the eigenvalues of (\ref{eq:eigenv}) are of the form
\begin{align*}
\lambda_{k}=  -\frac{(1-2s)^2}{4} - \left(k-s+\frac{1}{2}\right)^2, \ k\in \N_{\geq 0}.
\end{align*}
\end{lem}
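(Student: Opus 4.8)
The plan is to reduce the singular Sturm--Liouville problem $(\ref{eq:eigenv})$ to a Gegenbauer (equivalently, generalized Legendre) equation by stripping off the correct endpoint behaviour, and then to read off the eigenvalues from the classical quantization condition. First I would examine the endpoints $\va\in\{0,\pi\}$. Multiplying $(\ref{eq:eigenv})$ by $\sin(\va)^2$ shows these are regular singular points, with indicial equation $\rho(\rho-1)=s^2-\tfrac14$, so the Frobenius exponents are $\tfrac12+s$ and $\tfrac12-s$. I would then check directly that a solution with leading behaviour $v\sim c_0\va^{\frac12+s}$ yields $\sin(\va)^{1-2s}\p_\va\big(\sin(\va)^{\frac{2s-1}{2}}v\big)\to 2s\,c_0$ (nonzero unless $c_0=0$), whereas the branch $v\sim c_0\va^{\frac12-s}$ makes $\sin(\va)^{\frac{2s-1}{2}}v\to c_0$ with next correction of order $\va^2$, so that $\sin(\va)^{1-2s}\p_\va\big(\sin(\va)^{\frac{2s-1}{2}}v\big)=O(\va^{2-2s})\to 0$. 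Hence the generalized Neumann condition is equivalent to requiring the eigenfunction to behave like $(\sin\va)^{\frac12-s}$ at both $\va=0$ and $\va=\pi$; in particular each eigenspace is one-dimensional.

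Next, writing $\lambda=-\frac{(1-2s)^2}{4}-\nu^2$ and substituting $v=(\sin\va)^{\frac12-s}g$, then passing to $x=\cos\va\in(-1,1)$, a short computation turns $(\ref{eq:eigenv})$ into the Gegenbauer-type equation
\[
(1-x^2)g''-2(1-s)x\,g'+\big(\nu^2-(\tfrac12-s)^2\big)g=0,
\]
while the endpoint analysis becomes the requirement that $g$ be analytic at both $x=1$ and $x=-1$. The solution analytic at $x=1$ is a Gauss hypergeometric function which generically carries a $(1+x)^s$-type branch singularity at $x=-1$; it is analytic at $x=-1$ as well precisely when $\nu^2-(\tfrac12-s)^2=k(k+1-2s)$ for some $k\in\N_{\geq0}$, in which case $g$ is a multiple of the Gegenbauer polynomial $C_k^{1/2-s}$ (a Chebyshev polynomial when $s=\tfrac12$). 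Since $k(k+1-2s)=(k-s+\tfrac12)^2-(\tfrac12-s)^2$, this is the condition $\nu^2=(k-s+\tfrac12)^2$, and unwinding the substitution gives $\lambda_k=-\frac{(1-2s)^2}{4}-(k-s+\tfrac12)^2$. Finally, $v_k(\va)=(\sin\va)^{\frac12-s}C_k^{1/2-s}(\cos\va)$ lies in $L^2(0,\pi)$ (since $1-2s>-1$) and satisfies the boundary conditions by construction, so each $\lambda_k$ is genuinely an eigenvalue, while the analysis above shows conversely that any eigenvalue must equal some $\lambda_k$. One may additionally note that the $v_k$ are mutually orthogonal and complete in $L^2(0,\pi)$, by the corresponding property of the Gegenbauer polynomials with respect to the weight $(1-x^2)^{-s}$, or by the compactness recorded in the proof of Proposition \ref{prop:KT1b}.

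The main obstacle is the endpoint bookkeeping of the first step: one must argue carefully that the generalized Neumann condition selects the exponent $\tfrac12-s$, and treat the resonant case $s=\tfrac12$, where the two indicial roots coincide and a logarithmic solution could a priori enter. In fact it does not: for $s=\tfrac12$ the $\sin(\va)^{-2}$ term in $(\ref{eq:eigenv})$ vanishes and the problem degenerates to the Neumann Laplacian on $[0,\pi]$, whose eigenvalues $-k^2$ agree with the formula. A secondary point is to confirm that no eigenvalue lies above $-\frac{(1-2s)^2}{4}$, which is automatic once the quantization is in place, since $\nu^2=(k-s+\tfrac12)^2\geq(\tfrac12-s)^2$.
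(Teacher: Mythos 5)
Your argument is correct, and in substance it travels the same road as the paper -- reduce (\ref{eq:eigenv}) to a generalized Legendre/Gegenbauer equation, recognize the eigenfunctions as $(\sin\va)^{\frac{1-2s}{2}}$ times polynomials in $\cos\va$, and use a connection formula to exclude all other parameter values -- but the execution differs in ways worth recording. The paper starts from the representation of the general solution in terms of $P^{s}_{\nu}(\cos\va)$ and $Q^{s}_{\nu}(\cos\va)$, produces the polynomial eigenfunctions by an explicit coefficient recursion (Lemma \ref{lem:hyp}), and rules out other $\nu$ via Legendre recurrences, the $P$--$Q$ connection formula, and asymptotics of $Q^{\mu}_{\nu}$ quoted from a symbolic computation. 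You instead make the meaning of the generalized Neumann condition explicit through the Frobenius exponents $\frac12\pm s$, showing it is exactly the selection of the $(\sin\va)^{\frac12-s}$ branch at each endpoint (a point the paper leaves implicit), and then obtain the quantization $\nu^2=(k-s+\tfrac12)^2$ from the termination criterion for ${}_2F_1$ at the second singular point; your $C_k^{1/2-s}$ are, up to normalization, the paper's polynomials $P_k$. Your route is more self-contained (no computer-assisted asymptotics), isolates the one genuinely delicate point -- the resonant case $s=\tfrac12$, where the indicial roots coincide -- and disposes of it correctly by observing that the equation degenerates to the Neumann Laplacian; the paper's route, in exchange, yields the explicit recursion (\ref{eq:coef}) for the eigenfunction coefficients. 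Both arguments establish the two directions needed (each $\lambda_k$ is an eigenvalue, and there are no others), so nothing is missing.
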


Apart from the characterization of the eigenvalues, it is also possible to determine (some of) the associated eigenfunctions explicitly. This boils down to finding appropriate solutions of a generalized Legendre equation:

\begin{lem}
\label{lem:hyp}
Let $\nu = k-\mu$, $k\in \N_{\geq 0}$, $\mu\in(0,1)$.
Then the generalized Legendre equation
\begin{align}
\label{eq:Legeq}
(1-x^2)w''(x) - 2x w'(x) + \left( \nu(\nu+1) - \frac{\mu^2}{1-x^2} \right)w(x)=0,
\end{align}
has a solution of the form
\begin{align*}
f_{\nu}^{\mu}(x) = \frac{P_{k}(x)}{(1-x^2)^{\mu/2}},
\end{align*}
where $P_{k}(x)$ is a polynomial of degree (exactly) $k$. 
\end{lem}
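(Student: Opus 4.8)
The plan is to strip off the singular weight by the substitution $w(x) = (1-x^2)^{-\mu/2}g(x)$ and to show that the equation obtained for $g$ is a Gegenbauer-type equation possessing a polynomial solution of degree exactly $k$.

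\emph{Step 1 (conjugation).} I would insert $w = (1-x^2)^{-\mu/2}g$ into (\ref{eq:Legeq}), using $\partial_x (1-x^2)^{-\mu/2} = \mu x(1-x^2)^{-\mu/2-1}$, and then multiply through by $(1-x^2)^{\mu/2}$. The key point of the computation is that the genuinely singular contributions — the ones carrying a factor $(1-x^2)^{-1}$, which come from differentiating the weight in $w''$ and in $-2xw'$ together with the term $-\mu^2(1-x^2)^{-1}w$ — cancel against each other up to the harmless multiple $-\mu^2 g$. What remains is
\begin{align*}
(1-x^2)g''(x) - 2(1-\mu)x g'(x) + \bigl[\nu(\nu+1) + \mu - \mu^2\bigr] g(x) = 0 .
\end{align*}
Since $\nu = k-\mu$ one computes $\nu(\nu+1) + \mu - \mu^2 = k(k+1-2\mu)$, so that $g$ must solve
\begin{align}
\label{eq:gegenb}
(1-x^2)g''(x) - 2(1-\mu)x g'(x) + k(k+1-2\mu)g(x) = 0 ,
\end{align}
which is the Gegenbauer (ultraspherical) equation with parameter $\alpha = \tfrac12 - \mu$.

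\emph{Step 2 (polynomial solution).} Write $L$ for the operator on the left of (\ref{eq:gegenb}) and let $\mathcal{P}_k$ denote the space of polynomials of degree at most $k$. Since differentiating twice lowers the degree by two, $L$ maps $\mathcal{P}_k$ into itself; with $q(t):=t(t+1-2\mu)$ one checks that $L[x^j]$ differs from $\bigl(q(k)-q(j)\bigr)x^j$ only by a multiple of $x^{j-2}$, so in the monomial basis $L|_{\mathcal{P}_k}$ is upper triangular with diagonal entries $q(k) - q(j)$, $j = 0,\dots,k$. For $0 \le j < k$ one has $q(k) - q(j) = (k-j)(k+j+1-2\mu) > 0$, because $k-j \ge 1$ and $k+j+1-2\mu > 0$ (here $k \ge 1$ and $\mu < 1$). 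Hence $q(k)$ is the unique vanishing diagonal entry; the leading $k\times k$ block of $L|_{\mathcal{P}_k}$ is therefore invertible, so $L|_{\mathcal{P}_k}$ has rank $k$ and a one-dimensional kernel. Solving $Lg = 0$ recursively from the top coefficient and normalising the coefficient of $x^k$ to $1$, one obtains a solution $g =: P_k$ of (\ref{eq:gegenb}) that is a polynomial of degree exactly $k$ (even if $k$ is even, odd if $k$ is odd). The value $\mu = \tfrac12$, at which the customary Gegenbauer normalisation degenerates (and $P_k$ is a Chebyshev polynomial), requires no special treatment.

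\emph{Step 3 (conclusion).} Since the map $g \mapsto (1-x^2)^{-\mu/2}g$ is a bijection of smooth functions on $(-1,1)$ and Step 1 is an equivalence, reading that computation in reverse shows that $f_\nu^\mu(x) = (1-x^2)^{-\mu/2}P_k(x)$ solves (\ref{eq:Legeq}), which is the claim.

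The only slightly delicate part is the bookkeeping in Step 1 — checking that the three sources of $(1-x^2)^{-1}$-singular terms genuinely cancel; after that the argument is elementary linear algebra, and the separation of the eigenvalue $q(k)$ from $q(0),\dots,q(k-1)$ used in Step 2 is exactly what the hypothesis $\mu \in (0,1)$ provides.
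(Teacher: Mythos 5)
Your proof is correct and follows essentially the same route as the paper: substitute $w=(1-x^2)^{-\mu/2}P_k$, reduce to the equation $(1-x^2)P_k''-2(1-\mu)xP_k'+k(k+1-2\mu)P_k=0$, and observe that the resulting three-term relation among the monomial coefficients (equivalently, the upper-triangularity of the reduced operator on $\mathcal{P}_k$ with diagonal entries $(k-j)(k+j+1-2\mu)$) forces a one-dimensional space of solutions of degree exactly $k$. Your linear-algebra packaging is in fact a cleaner rendering of the paper's explicit coefficient recursion, which contains minor typographical slips in the signs and indices.
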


\begin{proof}[Proof of Lemma \ref{lem:hyp}]
We consider solutions of the generalized Legendre equation (\ref{eq:Legeq}) for our choices of parameters $\mu$ and $\nu$. In order to solve the equation, we consider the ansatz
\begin{align*}
w(x)= \frac{P_{k}(x)}{(1-x^2)^{\mu/2}}.
\end{align*}
Inserting this into the generalized Legendre equation (\ref{eq:Legeq}), results in an equation for the $P_{k}$: 
\begin{align*}
(1-x^2)P''_{k}(x) + 2(\mu-1)xP'_{k}(x) + (k^2-2k\mu+k)P_{k}(x) =0.
\end{align*}
For a polynomial ansatz, $P_{k}(x)= \sum\limits_{j=0}^{k}\alpha_{j}x^j$, this turns into a recursion formula for the coefficients $\alpha_{j}$:
\begin{equation}
\begin{split}
\label{eq:coef}
2\alpha_{2} + (k^2-2k\mu+k)\alpha_{0}&=0,\\
6\alpha_{3} +  (2\mu-2+k^2-2k\mu+k)\alpha_{1} &= 0,\\
(j+1)(j+1)\alpha_{j+2} + (j(j-1)+2j(\mu-1)+k^2-2k\mu+k)\alpha_{j} & = 0, \mbox{ if } j\geq 2.
\end{split}
\end{equation}
This yields $k$ equations for the $k+1$ coefficients of the polynomial $P_{k}(x)$. Due to the restrictions $\mu \in (0,1)$ and $k\geq 0$, the (coefficient) equations can be solved explicitly if $k\leq 3$. Moreover, we notice that the equation
$$x(x-1)+2x(\mu-1)+k^2-2k\mu+k = 0, $$
has pairs of complex-valued solutions if $k\geq 4$ and $\mu \in (0,1)$ -- but no real ones. Hence, by the last equation in (\ref{eq:coef}), $a_{j+2}\neq 0$ if  $a_{j}\neq 0$. In effect, it is always possible to find a one-parameter family of solutions of system (\ref{eq:coef}). For even $k$ this depends on $a_{0}$, while for odd $k$ it depends on $a_{1}$. This proves the claim.
\end{proof}

\begin{proof}[Proof of Lemma \ref{lem:eigenv}]
The general (complex valued) solution of the ODE (\ref{eq:eigenv}) is given by
\begin{equation}
\begin{split}
\label{eq:reprfl}
v(\va)= & \ C_{1}(\cos^2(\va)-1)^{\frac{1}{4}}P_{\frac{1}{2}(-1+ \sqrt{-1-4\lambda + 4s - 4s^2})} ^{s}(\cos(\va))\\
&+C_{2}(\cos^2(\va)-1)^{\frac{1}{4}}Q_{\frac{1}{2}(-1+ \sqrt{-1-4\lambda + 4s - 4s^2})}^{s}(\cos(\va)),
\end{split}
\end{equation}
where $P_{\nu}^{\mu}(x)$ and $Q_{\nu}^{\mu}(x)$ are Legendre functions of the first and second kind, i.e. solutions of the generalized Legendre equation (\ref{eq:Legeq}).
In order to be an eigenfunction, the solution has to have vanishing generalized Neumann data. Setting
$\nu = k-s=\frac{1}{2}(-1+ \sqrt{-1-4\lambda + 4s - 4s^2})$, $k\in \N$, leads to simplifications: According to Lemma \ref{lem:hyp} there are solutions of the form
\begin{align*}
f_{\nu}^{\mu}(\cos(\va)) = \frac{P_{k}(\cos(\va))}{\sin(\va)^{s}},
\end{align*}
where $P_{k}(x)$ is a polynomial of degree $k$.
Thus, for this choice of $\nu$ the general solution (\ref{eq:reprfl}) becomes
\begin{align*}
v_{k}(\va)= \sin(\va)^{\frac{1-2s}{2}}P_{k}(\cos(\va)).
\end{align*}
Inserting this into the boundary condition, we infer that these functions do not only satisfy (\ref{eq:Legeq}) but also obey the right boundary conditions. Thus, these functions are indeed eigenfunctions of our equation. 
It remains to show that the corresponding eigenvalues constitute the whole spectrum, i.e. there are no further eigenvalues (which we might have missed by computing only special eigenfunctions). This follows from recurrence relations for the generalized Legendre functions. Setting $h^{\mu}_{\nu}(x)=c_{1}P^{\mu}_{\nu}(x)+c_{2}Q^{\mu}_{\nu}(x)$ with $c_{1},c_{2}\in \R$, we have (c.f. \cite{OLBC}):
\begin{align*}
\sin(\va)^{1-2s}\dv (\sin(\va)^{s}h_{\nu}^s(\cos(\va))) = & \ s(\sin(\va)^{s-1}\cos(\va)h^{s}_{\nu}(\cos(\va)) \\
&- (\sin(\va))^{s-1}[(s-\nu-1)h^{s}_{\nu +1}(\cos(\va)) \\
& + (\nu+1)\cos(\va)h^{s}_{\nu}(\cos(\va)) ]) \\
 = & \ -(\sin(\va))^{-s}[\cos(\va)(s-\nu-1)h^{s}_{\nu}(\cos(\va)) \\
&- (s-\nu-1)h_{\nu+1}^{s}(\cos(\va))].
\end{align*}
Due to the asymptotics of $Q_{\nu}^{\mu}(\cos(\va))$ at $\va=0$ (a symbolic Mathematica computation yields $Q_{\nu}^{\mu}(\cos(\va)) \sim \frac{2^{-s}\pi^2 1/ \sin(\pi s)1/ \sin(\pi(s+\nu))}{\Gamma(s)\Gamma(-s-\nu)\Gamma(1-s+\nu)}$), 
it follows that $c_{2}=0$ unless $\nu=k-s$ for $k\in \N_{\geq 0}$, as $P_{\nu}^{\mu}(\cos(\va))$ satisfies the boundary conditions at $\va=0$ for $\mu\in(0,1)$ and arbitrary $\nu$.
We claim that, in effect, only $\nu=k-s$ is admissible (in particular, none of the $P_{\nu}^{s}(\cos(\va))$ are admissible for $\nu\neq k-s$). This is a consequence of the connection formulas, c.f. \cite{OLBC}, for Legendre functions:
\begin{align*}
P_{\nu}^{\mu}(-x) = - \frac{2}{\pi}\sin((\nu+s)\pi)Q^{\mu}_{\nu}(x)+ \cos((\nu+s)\pi)P^{\mu}_{\nu}(x).
\end{align*}
Evaluated at $x=\cos(\pi)$, the asymptotics of $Q_{\nu}^{\mu}(\cos(\va))$ and of $P_{\nu}^{\mu}(\cos(\va))$ imply that $\nu=k-s$, $k\in \N$, is the only admissible family of parameters.
Thus, assuming the validity of the boundary conditions at $\va=0$ and at $\va=\pi$ necessarily leads to $\nu=k-s$, $k\in \N$. Combined with the form of $\nu$ given in (\ref{eq:reprfl}), this determines the possible eigenvalues.
\end{proof}

\begin{rmk}
The explicit representation of the eigenvalues illustrates that in the one-dimensional situation the spectral gap of the extension problem related to the fractional Laplacian is comparable with the spectral gap for the pure Laplacian (in that case $\lambda = -k^2$, $k\in \Z$).
\end{rmk}

The characterization of the spectrum of the one-dimensional Caffarelli extension allows to deduce stronger $L^2$ Carleman estimates similar to the ones in \cite{KT1}. In particular, it is possible to avoid the logarithmic loss in the Carleman estimate. As a consequence, it is possible to treat the strong unique continuation principle for potentials which are bounded by arbitrary scaling invariant Hardy type potentials: 

\begin{prop}
\label{prop:sg}
Let $s\in[\frac{1}{2},1)$ and let $w \in H^{s}(\R)$ be a solution of
\begin{align*}
(-\D)^{s}w = Vw \mbox{ in } \R.
\end{align*}
Assume that $w$ vanishes of infinite order at the origin and that $|V(y)| \lesssim |y|^{-2s}$ if $s> \frac{1}{2}$ and that $|V(y)| \leq c|y|^{-1}$ for $0<c\ll 1$ if $s=\frac{1}{2}$. 
Then $w\equiv 0$.
\end{prop}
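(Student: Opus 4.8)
The plan is to run the three-step scheme of Sections~\ref{sec:subsymm}--\ref{sec:dewucp} (Carleman estimate at the boundary, doubling inequality, blow-up reduction to the weak unique continuation principle), but to replace the symmetric Carleman estimate of Proposition~\ref{prop:KT1b} by the sharper, logarithm-loss-free estimate available in one dimension. First I pass to the Caffarelli--Silvestre extension $\tilde{w}$ of $w$, a solution of $\nabla\cdot y_{n+1}^{1-2s}\nabla\tilde{w}=0$ in $\R^{2}_{+}$ with $-\lim_{y_{n+1}\to0}y_{n+1}^{1-2s}\p_{n+1}\tilde{w}=Vw$ and $|V|\lesssim|y|^{-2s}$ (resp.\ $|V|\le c|y|^{-1}$, $0<c\ll1$, when $s=\tfrac12$). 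By Lemma~\ref{lem:eigenv} the frequencies entering the parametrix of Remark~\ref{rmk:Carlsym} on the spherical eigenspaces are $\mu_k=\sqrt{(k-s+\tfrac12)^2+\tfrac{(1-2s)^2}{4}}$, $k\ge0$; for $s\in[\tfrac12,1)$ these are pairwise distinct and $\mu_{k+1}-\mu_k\to1$, so they are $c_s$-separated for some $c_s>0$.

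The heart of the argument is to upgrade Proposition~\ref{prop:KT1b}. Using the per-eigenspace parametrix $K_\mu(t,s)$ of Remark~\ref{rmk:Carlsym} with the bound $|K_\mu(t,s)|\lesssim\tau^{-1}e^{-\dist(\va'(t),\mu)|t-s|}$ valid in the critical regime $|\va'|\in[\tfrac{\tau}{2},4\tau]$, and choosing $\tau$ along a sequence $\tau\to\infty$ with $\dist(\tau,\{\mu_k\}_{k\ge0})\ge\tfrac{c_s}{4}$ — possible precisely because of the uniform gap — the distance factor is bounded below uniformly; in the complementary low- and high-frequency regimes the $(t,\tau)$-symbol is elliptic exactly as in the proof of Proposition~\ref{prop:KT1b}. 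Assembling the per-eigenspace estimates via Young's inequality and Fubini, and controlling the boundary term by the trace inequality~(\ref{eq:trint}), yields a Carleman estimate of the shape of~(\ref{eq:Carlflsymb}) but \emph{without} the weights $(1+\ln(|y|)^2)^{-1/2}$: with a clean factor $\tau$ in front of $\|e^{\tau\va}y_{n+1}^{\frac{1-2s}{2}}|y|^{-1}w\|_{L^2(\R^{2}_{+})}$ and $\tau^{s}$ in front of $\|e^{\tau\va}|y|^{-s}w\|_{L^2(\R)}$.

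With the logarithmic loss gone, the scaling-critical boundary contribution can be absorbed. Inserting a cut-off of $\tilde{w}$ into the improved estimate, exactly as in the proofs of Corollary~\ref{cor:schp} and Proposition~\ref{prop:doubl}, the right-hand boundary term is $\tau^{\frac{1-2s}{2}}\|e^{\tau\va}|y|^{s}Vw\|_{L^2(\R)}\lesssim c\,\tau^{\frac{1-2s}{2}}\|e^{\tau\va}|y|^{-s}w\|_{L^2(\R)}$; for $s>\tfrac12$ this is dominated by the left-hand term $\tau^{s}\|e^{\tau\va}|y|^{-s}w\|_{L^2(\R)}$ once $\tau$ is large, \emph{for any} $c$, since $s-\tfrac{1-2s}{2}=\tfrac{4s-1}{2}>0$, whereas for $s=\tfrac12$ the two $\tau$-powers coincide up to the fixed constants in~(\ref{eq:trint}) and Herbst's inequality, so the absorption forces $0<c\ll1$. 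In particular the conclusion of Corollary~\ref{cor:schp} now holds under the hypotheses of the proposition, and, using the Caccioppoli estimate~(\ref{eq:ellreg}) whose boundary integral $\int\psi^{2}|V|w^{2}$ is controlled by $c$ times the weighted Dirichlet energy, so does the doubling inequality of Proposition~\ref{prop:doubl}. I then conclude as in the blow-up argument of Section~\ref{sec:reduction}: since $w$ vanishes of infinite order at $0$, $\tilde{w}$ vanishes of infinite order in the tangential directions; if it also vanishes of infinite order in the normal direction we are done by Corollary~\ref{cor:schp}, and otherwise the doubling inequality together with the finite normal vanishing order yields, after rescaling and passing to the limit, a nontrivial solution $w_0$ of the homogeneous extension problem with zero boundary data, contradicting Proposition~\ref{prop:wuc}. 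Hence $w\equiv0$.

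The main obstacle is the uniform construction of the logarithm-loss-free Carleman estimate: one must verify that the per-eigenspace parametrix bounds assemble into a single global inequality with the stated $\tau$-powers, treat the lowest mode with care — for $s=\tfrac12$ it is $\mu_0=0$, where the buffer $\tfrac{(n-2s)^2}{4}$ also degenerates — and patch the critical frequency regime to the elliptic ones without losing the gain. The resulting estimate is slack by a positive power of $\tau$ when $s>\tfrac12$, so no restriction on $c$ is needed, but at the endpoint $s=\tfrac12$ the slack is only by a constant, and the interplay of this constant with those in~(\ref{eq:trint}) and Herbst's inequality (the latter also entering the Caccioppoli step) is precisely what necessitates $0<c\ll1$.
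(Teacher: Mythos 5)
Your overall strategy coincides with the paper's (the paper only sketches this proof): pass to the extension, use the explicit one-dimensional spectrum of Lemma \ref{lem:eigenv} to get a uniform spectral gap, invoke the parametrix of Remark \ref{rmk:Carlsym} to obtain a Carleman estimate without the logarithmic weights, and then absorb the scaling-critical boundary term, with the endpoint $s=\tfrac12$ forcing $0<c\ll1$. However, there is a genuine quantitative error in the key step. You claim that the spectral gap yields the estimate of Proposition \ref{prop:KT1b} with the \emph{same} $\tau$-powers ($\tau$ on the bulk $L^2$ term, $\tau^{s}$ on the boundary term) and merely without the factors $(1+\ln(|y|)^2)^{-1/2}$. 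That is too strong. The gap-based bound of Remark \ref{rmk:Carlsym} controls $\dist(\va'(t),\spec(\tilde{\D}_{\theta}))\,\|v\|_{L^2}$, and the distance to the spectrum is bounded below only by a \emph{constant} (the gap size), not by anything growing in $\tau$; as the paper says explicitly, removing the convexity weight costs ``half a power of $\tau$''. Consequently the boundary estimate one actually obtains has only $\tau^{\frac{2s-1}{2}}$ on the left against $\tau^{\frac{1-2s}{2}}$ on the right, i.e.\ a gain of $\tau^{2s-1}$ — positive exactly for $s>\tfrac12$ and degenerate at $s=\tfrac12$. Your own bookkeeping ($\tau^{s}$ versus $\tau^{\frac{1-2s}{2}}$, gain $\tau^{\frac{4s-1}{2}}$) would make the absorption work for every $s>\tfrac14$ with arbitrarily large critical potentials, which would render the hypothesis $s\ge\tfrac12$ superfluous and contradict the structure of the paper's main theorem, where $s\in[\tfrac14,\tfrac12)$ with nontrivial $V_1$ is explicitly excluded. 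The inconsistency between your claimed estimate and the stated range of $s$ is the sign that the estimate you rely on is not available.

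A second, related flaw is the mechanism you propose for exploiting the gap: choosing $\tau\to\infty$ along a sequence with $\dist(\tau,\{\mu_k\})\ge c_s/4$ does not make $\dist(\va'(t),\mu_k)$ uniformly bounded below, because $\va'(t)=\tau\phi'(t)$ sweeps an interval of length comparable to $\tau$ as $t$ ranges over the support of $v$, and therefore crosses on the order of $\tau$ of the eigenvalues $\mu_k$ no matter how $\tau$ is chosen. The spectral gap is used differently: the convexity of $\phi$ guarantees that $\va'(t)$ crosses each $\mu_k$ transversally, and the gap ensures that for each $t$ at most one eigenvalue is close to $\va'(t)$, so that the kernels $K_{\mu_k}$ can be summed over $k$ via Young's inequality without interaction. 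Repairing the proof therefore requires (i) carrying out this summation honestly, accepting the resulting loss of $\tau$-powers relative to \eqref{eq:Carlflsymb}, and (ii) redoing the absorption with the correct gain $\tau^{2s-1}$, which is precisely what restricts the argument to $s\ge\tfrac12$ and produces the smallness condition at $s=\tfrac12$. The remaining steps (doubling, blow-up, reduction to Proposition \ref{prop:wuc}) are then as you describe and as in Section \ref{sec:reduction}.
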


\begin{proof}[Sketch of Proof]
The proof relies on strengthened Carleman bounds. In the case of a spectral gap, it is possible to give bounds which do not depend on the convexity parameter of the weight in exchange of a loss of half a power of $\tau$, c.f. Remark \ref{rmk:Carlsym}. Roughly speaking, in the $u$-coordinates, this results in a boundary estimate of the form
\begin{align*}
\tau^{\frac{2s-1}{2}}\left\| u \right\|_{L^2(\R \times \p S^{n}_+)} \lesssim \tau^{\frac{1-2s}{2}}\left\| e^{2st}V u\right\|_{L^2(\R \times \p S^{n}_+)} + \mbox{ bulk contributions}.
\end{align*}
This explains the slightly modified $s$-dependence of the estimate.
\end{proof}


\section[Improved Integrability: The Half-Laplacian]{$L^p$-Regularity: Understanding the Half-Lapla-cian in the Framework of Koch \& Tataru}

\label{sec:Half}
As pointed out in the introduction, by an even reflection it is possible to interpret the unique continuation problem for the fractional Laplacian in the framework of Koch and Tataru \cite{KT1}. The potentials $W_{1}$ and $W_{2}$ are essentially given by $H(y_{n+1})V(y')$, with $H(y_{n+1})$ denoting a Heaviside function. The result of Koch and Tataru immediately demonstrates that for the half-Laplacian the strong unique continuation property holds with $V\in l^1_{w}(L^{n+1})$ under additional smallness assumptions as described in \cite{KT1}. For the half-Laplacian scaling arguments, however, suggest that the critical space is given by potentials $V\in L^{n}$ (possibly obeying some smallness assumption). Thus, it is natural to pose the question whether this can still be achieved in the framework of Koch and Tataru \cite{KT1}. As we briefly illustrate below, this is indeed possible for subcritical potentials:

\begin{prop}
Let $w\in H^{\frac{1}{2}}(\R^n)$ be a solution of
\begin{align*}
(-\D)^{\frac{1}{2}}w = V w \mbox{ in } \R^n.
\end{align*}
Assume that $V \in L^{n+\epsilon}(\R^n)$ and that $w$ vanishes of infinite order at the origin. Then $w\equiv 0$.
\end{prop}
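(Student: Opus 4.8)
\emph{Set-up and the Koch--Tataru bottleneck.} Since $s=\tfrac12$ the Caffarelli--Silvestre extension is genuinely harmonic, and after an even reflection the problem becomes, exactly as explained in the introduction, an equation of the type (\ref{eq:KT1}) with $g^{ij}=\mathrm{id}$, $V=0$ and first order potentials $W_1=(0,\dots,0,H(y_{n+1}))V(y')$, $W_2=H(y_{n+1})V(y')$. The theorem of Koch and Tataru then applies as soon as $V\in l^1_w(L^{n+1})$; the loss here comes solely from the fact that the Heaviside coefficient occupies a half-space of unit thickness, so a \emph{first order} boundary potential of strength $V$ is as expensive as $\|V\|_{L^{n+1}(\R^n)}$. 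The plan is to realise the reflected equation instead with a \emph{zeroth order} potential of the same strength, which for a boundary layer of unit thickness is only required in $c_0(L^{\frac{n+1}{2}})$ of the ambient space, hence is implied by $V\in L^{n+\epsilon}(\R^n)$ whenever $\tfrac{n+1}{2}\le n+\epsilon$, i.e. always. A preliminary reduction makes this usable: because $L^{n+\epsilon}$ is subcritical for $(-\D)^{1/2}$, boot-strapping $w=(-\D)^{-1/2}(Vw)$ through the $L^p$ mapping properties of the order-one Riesz potential upgrades $w$ locally to $L^\infty\cap C^{\alpha}$, so that $Vw$ inherits the (local) integrability of $V$.

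\emph{The improved extension.} Rather than extending $w$ by its harmonic extension $\bar w$ (which has a corner across $\{y_{n+1}=0\}$, the very source of the Heaviside potential), I would construct an extension $\hat u$ of (a cut-off of) $w$ that is $C^1$ across the hyperplane, i.e. $\hat u(\cdot,0)=w$ and $\partial_{n+1}\hat u(\cdot,0)=0$, at the price of $\hat u$ solving an inhomogeneous equation $\Delta\hat u=\Phi$ in $\R^{n+1}_+$. Concretely one adds to $\bar w$ a correction that supplies the normal derivative $+Vw$ cancelling $\partial_{n+1}\bar w(\cdot,0)=-Vw$, built so that the resulting source $\Phi$ lives in a fixed-width slab $\{0<y_{n+1}<1\}$ and equals $\tilde V\hat u$ for a potential $\tilde V$ of size $\lesssim|V(y')|$ there and $0$ elsewhere; keeping the slab of unit width is exactly what makes $\|\tilde V\|_{L^{(n+1)/2}(\R^{n+1})}\lesssim\|V\|_{L^{(n+1)/2}(\R^n)}\lesssim\|V\|_{L^{n+\epsilon}(\R^n)}$ locally, and a truncation of the tails places $\tilde V$ in $c_0(L^{\frac{n+1}{2}})$. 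After the even reflection $\hat u$ then solves $\Delta\hat u=\tilde V\hat u$ on $\R^{n+1}$ with \emph{no} first order potential. One must also check that $\hat u$ is an admissible competitor for the unique continuation argument, i.e. that $\hat u$ and its trace still vanish to infinite order at the origin; this uses the infinite vanishing of $w$ on the boundary together with the fact that the correction is governed by $Vw$, and in the tangential directions the boot-strap of Proposition \ref{prop:wuc} (now carried out for the non-degenerate Laplacian) propagates the normal-direction vanishing.

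\emph{Conclusion.} With $\hat u$ in hand one argues exactly as in the proof of Corollary \ref{cor:schp}: cut $\hat u$ off on an annulus between scales $\delta\ll r\ll1$, insert it into the $L^p$ Carleman estimates of Koch and Tataru \cite{KT1} for the flat Laplacian (in particular their handling of $c_0(L^{\frac{n+1}{2}})$ zeroth order potentials; for $p=2$ this is the $s=\tfrac12$ instance of Proposition \ref{prop:KT1b}), let $\delta\to0$ using the infinite order of vanishing and the interior gradient estimate, absorb $\tilde V\hat u$ into the left-hand side by the $c_0(L^{\frac{n+1}{2}})$ condition and by subcriticality, and finally let $\tau\to\infty$. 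This forces $\hat u\equiv0$ near $0$, hence $w\equiv0$ near the origin, and the weak unique continuation principle (Proposition \ref{lem:wuc}) then yields $w\equiv0$ on $\R^n$.

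\emph{Main obstacle.} The delicate step is the construction and analysis of the improved extension: one needs a correction whose Laplacian does not involve derivatives of $V$, so that the naive ``$y_{n+1}\cdot(\text{harmonic extension of }Vw)$'' correction, which produces a $\Delta_{y'}(Vw)$ term, must be replaced by one in which the tangential Laplacian only ever hits the now-smooth function $w$; and one must verify that the resulting zeroth order potential genuinely lands in $c_0(L^{\frac{n+1}{2}}(\R^{n+1}))$ with the stated bound, as well as that the modified extension inherits the infinite-order vanishing despite the non-locality of the construction. This is exactly where the full positivity and $L^p$-theory of the \emph{non-degenerate} ($s=\tfrac12$) Laplacian are used, and, as noted in the introduction, this is why the argument does not extend to general $s$. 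The subcritical boot-strap putting $w$ into $L^\infty_{loc}$ is routine but has to precede everything, since otherwise $Vw$ need not be integrable to a power above $\tfrac{n+1}{2}$.
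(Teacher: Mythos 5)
There is a genuine gap at the step you yourself flag as the main obstacle, and it is not merely technical: the construction of an extension $\hat u$ whose source term is a \emph{zeroth order} potential $\tilde V\hat u$ with $|\tilde V(y)|\lesssim |V(y')|$ on a unit slab cannot work as described. Any corrector that supplies the missing normal derivative $Vw$ on $\{y_{n+1}=0\}$ must carry the tangential profile of $Vw$; its Laplacian therefore contains either $\D'(Vw)$ (if the corrector is local in $y'$) or, if you route the $y'$-dependence through the harmonic extension $g$ of $Vw$, terms like $\p_{n+1}g\sim -(-\D')^{1/2}(Vw)$ --- a nonlocal operator of order one applied to $Vw$. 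Neither is pointwise dominated by $|V(y')|\,|\hat u(y)|$, so the identification $\Phi=\tilde V\hat u$ with the stated bound is not available. A scaling sanity check confirms that something must fail: if the reduction worked, it would yield strong unique continuation for $V\in L^{(n+1)/2}_{loc}(\R^n)$, which for $n\geq 2$ lies strictly below the scaling-critical space $L^{n}(\R^n)$ for $(-\D)^{1/2}$. The codimension-one source $Vw\,\delta_{0}(y_{n+1})$ is intrinsically a first-order (divergence-form) object for the $(n+1)$-dimensional Laplacian and cannot be traded for a zeroth-order one at no cost.

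The paper's proof also uses a refined extension, but in a way that stays at first order. One takes the harmonic \emph{Neumann} extension $\phi$ of the potential $V$ itself (not of $Vw$), so that after even reflection $\D\phi=V\delta_{0}(y_{n+1})$ distributionally, and writes $\D\tilde w=\nabla\cdot(W\tilde w)-W\cdot\nabla\tilde w$ with $W=\nabla\phi$. The gain is in integrability, not in order: elliptic regularity gives $\phi\in W^{1+\frac{1}{n+\epsilon},\,n+\epsilon}(\R^{n+1}_{+})$, hence $W\in L^{n+1+\delta}(\R^{n+1})$ by Sobolev embedding, which places the first-order potentials in the admissible Koch--Tataru class $l^{1}_{w}(L^{n+1})$. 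If you replace your corrector construction by this divergence-form rewriting, the rest of your outline (reduction to the Koch--Tataru machinery, then weak unique continuation) goes through; your preliminary boot-strap and the verification of infinite-order vanishing are then not needed in this form.
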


\begin{proof}
The proof is based on a refined extension. 
We consider the following auxiliary problem: Let $\phi$ denote the harmonic (Neumann) extension of the potential $V$, i.e.
\begin{align*}
\D \phi & = 0 \mbox{ in } \R^{n+1}_{+},\\
\p_{n+1} \phi & = V \mbox{ on } \{y_{n+1}=0\}.
\end{align*}
Then by regularity of the elliptic Neumann problem
\begin{align*}
\phi \in W^{1+ \frac{1}{n+\epsilon}, n+\epsilon}(\R^{n+1}_{+}). 
\end{align*}
Hence, $\nabla \phi \in W^{\frac{1}{n+\epsilon},n+\epsilon}$ and by the Sobolev embedding theorem for Besov spaces (c.f. for example \cite{L}), we obtain $\nabla \phi \in L^{n+1+\delta}(\R^{n+1}_{+})$, with $\delta=\delta(\epsilon)$ being a continuous function in $\epsilon$ for sufficiently small $0\leq \epsilon \ll 1$ and satisfying $\delta\geq 0$, $\delta(0)=0$. This integrability property is preserved under an even reflection. With a slight abuse of notation the reflected solution then distributionally satisfies
\begin{align*}
\D \phi & = V \delta_{0}(y_{n+1}) \mbox{ in } \R^{n+1}.
\end{align*}
Reflecting the solution, $\tilde{w}$, of the Caffarelli extension of (\ref{eq:frac}) evenly and setting $W= \nabla \phi$, we infer
\begin{align*}
\D \tilde{w} = \nabla (W \tilde{w}) - W \nabla \tilde{w} \mbox{ in } \R^{n+1}.
\end{align*}
As the previous considerations imply that $W\in L^{n+1+\delta}(\R^{n+1})$, the result of Koch and Tataru can be applied. Their machinery then proves the claim.
\end{proof}

\begin{rmk}
This reduction to the Koch/Tataru setting suggests that the potential $V$ appearing in the equation for the half-Laplacian should be interpreted as a \textit{gradient} rather than a usual potential for an elliptic problem. In this case one cannot expect to deal with arbitrarily large potentials (in contrast to \cite{Pan}) as a counterexample by Wolff indicates \cite{W} (exactly scaling-critical potentials represent an exception).
\end{rmk}

\section[Variable Coefficients]{The Carleman Estimates for Variable Coefficient Operators}
\label{sec:vc}
In this final section on unique continuation properties of the fractional Laplacian we extend the previous results to operators with variable coefficients and operators on domains which are not half-spaces. The methods we present allow to deal with three situations:
\begin{itemize}
\item First, we restrict our attention to the flat half-space, $\R^{n+1}_+$, but consider a class of more general operators with non-constant metrics:
\begin{align*}
(\p_{n+1} y_{n+1}^{1-2s} \p_{n+1} + \nabla' \cdot y_{n+1}^{1-2s} a(y') \nabla')w &= 0 \mbox{ in } \R^{n+1}_+, \\
\lim\limits_{y_{n+1}\rightarrow 0} y_{n+1}^{1-2s} \p_{n+1} w &= Vw \mbox{ on } \R^{n}.
\end{align*}
Here $a(y')$ is a tensor which satisfies certain Lipschitz bounds. We note that, in particular, this situation corresponds to generalizations of the Caffarelli-Silvestre extension for variable coefficients. Thus, it is possible to think of the results on these operators as statements on ``variable coefficient" fractional Laplacians.
\item In the second case, we study the analogous situation on manifolds with sufficiently regular boundaries. As we are only interested in a local statement, we consider the situation in local coordinates in a coordinate patch:
\begin{equation}
\begin{split}
\label{vc:boundary}
(\p_{\nu} d_{\p \Omega}(y)^{1-2s} \p_{\nu} + \nabla_{tan} \cdot d_{\p \Omega}(y)^{1-2s} a(y_{tan}) \nabla_{tan}) w & = 0 \mbox{ in } \Omega,\\
\lim\limits_{d_{\p \Omega}(y)\rightarrow 0} d_{\p \Omega}(y)^{1-2s} \p_{\nu}w = Vw \mbox{ on } \partial \Omega.
\end{split}
\end{equation}
In this context we use $\p_{\nu}$ to denote the ``normal" and $\nabla_{tan}$ the ``tangential" derivatives in appropriate normal coordinates; $d_{\p \Omega}(y)$ represents the distance function with respect to the boundary. This setting can be treated in analogy to the flat situation (here we emphasize that first order contributions which originate from the global formulation via corresponding Laplace Beltrami operators on the manifold represent controllable errors, c.f. step 4 in the proof of Proposition \ref{prop:Hardypot}). As before, the equation can be interpreted as a generalization of the Caffarelli-Silvestre extension to domains with non-flat boundary.
\item Last but not least, we comment on the half-Laplacian and the one-dimensional situation for which stronger results are available due to the presence of the already discussed spectral gap. As a consequence, perturbation techniques as in \cite{KT1} are available.
\end{itemize}
Since the second situation can be reduced to the first one, we emphasize the details in the $\R^{n+1}_+$-case and only point out the modifications in the second situation.

\subsection{The Half-Space Situation with Variable Coefficients and Differentiability}

In this section we address the half-space situation with variable coefficients. In this context, we use the following conventions and notations, c.f. \cite{Jo}: 
\begin{itemize}
\item Let $(M,g)$ be a Riemannian manifold of dimension $m$, assume that $p\in M$, $v\in T_{p}M$ and let $c_{v}:[0,\epsilon]\rightarrow M$ be a geodesic with $c_{v}(0)=p$, $\dot{c}_{v}(0)=v$. Set $V_{p}:= \{v\in T_{p}M| \ c_{v} \mbox{ is defined on } [0,1]\}$. Then we define
\begin{align*}
\exp_{p}:V_{p} \rightarrow M, \ \
v \mapsto c_{v}(1).
\end{align*}
If we want to point out the dependence on the metric, we also use the notation $\exp_{g,p}$. We remark that if $T_{p}M$ is identified with $\R^m$ the exponential map yields a local choice of coordinates. 
\item Let $(M,g)=(\R \times M, 1 \times g(y'))$. We set $\la_{g}(y):= \sqrt{y_{n+1}^2 + \lb_{g}(y')^2}$ with $y=(y',y_{n+1})$ and $\lb_{g}(y')$ being the geodesic distance of $y'$ from the origin with respect to the metric $g(y')$ on $\R^n$.
\end{itemize}
With this, we can prove the following Proposition:

\begin{prop}[Variable Coefficient Carleman Estimate]
\label{prop:Hardypot}
Suppose that $a:\R^{n}\rightarrow \R^{n\times n}$ with
\begin{align*}
\lambda |\xi|^2 \leq \xi\cdot a(y') \xi \leq \Lambda |\xi|^2, \ \ 0 < \lambda \leq \Lambda < \infty, \ \ a_{ij}= a_{ji}, \ \
a\in C^{2}. 
\end{align*} 
Let $s\in[\frac{1}{4},1)$ and set 
$$\phi(y)= - \ln(\la_{a^{-1}}(y)) + \frac{1}{10}\left( \ln(\la_{a^{-1}}(y))\arctan(\la_{a^{-1}}(y)) - \frac{1}{2} \ln(1+\ln(\la_{a^{-1}}(y))^2) \right).$$
Assume that $w\in H^{1}(y_{n+1}^{1-2s}dy,{\R^{n+1}_{+}})$ with $\supp{(w)} \subset \overline{B_{r}(0)^{+}}$, $0<r=r(a)\ll 1$, satisfies
\begin{align*}
(\p_{n+1} y_{n+1}^{1-2s} \p_{n+1} + \nabla' \cdot y_{n+1}^{1-2s} a(y') \nabla')w &= f \mbox{ in } \R^{n+1}_+, \\
\lim\limits_{y_{n+1}\rightarrow 0} y_{n+1}^{1-2s} \p_{n+1} w &= Vw \mbox{ on } \R^{n},
\end{align*}
and vanishes of infinite order at $0$.
Further assume that $V= V_{1} + V_{2}$,
\begin{align*}
&V_{1}(y) = \la_{a^{-1}}(y)^{-2s}h\left(\frac{y}{\la_{a^{-1}}(y)}\right), \ \  h\in L^{\infty},\ \
|V_{2}(y)| \leq c \la_{a^{-1}}(y)^{-2s+\epsilon},\\
&V_{2}(y)\in C^{1}(\R^n \setminus \{0\}), \ \ |\nabla V_{2}(y)| \leq \la_{a^{-1}}(y)^{-2s+\epsilon-1}.
\end{align*} 
Then for $\tau\geq \tau_{0}>0$ we have
\begin{align*}
&\tau^{s}\left\| e^{\tau \phi}(1+\ln(\la_{a^{-1}}(y))^2)^{-\frac{1}{2}} \la_{a^{-1}}(y)^{-s} w \right\|_{L^2(\R^{n})}\\ 
&+ \tau \left\|  e^{\tau \phi} (1+\ln(\la_{a^{-1}}(y))^2)^{-\frac{1}{2}} \la_{a^{-1}}(y)^{-1}  y_{n+1}^{\frac{1-2s}{2}} w \right\|_{L^2(\R^{n+1}_+)}^2 \\
&+  \left\| e^{\tau \phi} (1+\ln(\la_{a^{-1}}(y))^2)^{-\frac{1}{2}} y_{n+1}^{\frac{1-2s}{2}} \nabla w \right\|_{L^2(\R^{n+1}_+)}^2\\
\lesssim & \  \tau^{-\frac{1}{2}}\left\|e^{\tau \phi} \la_{a^{-1}} (y) y_{n+1}^{\frac{2s-1}{2}}f \right\|_{L^2(\R^{n+1}_+)}
 + \tau^{\frac{1-2s}{2}}\left\| e^{\tau \phi} \la_{a^{-1}}(y)^{s} Vw \right\|_{L^2(\R^{n})}.
\end{align*}
\end{prop}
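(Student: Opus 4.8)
The idea is to reduce the variable--coefficient operator, near the origin, to the model degenerate operator of Proposition~\ref{prop:KT1b} and to treat the difference as an absorbable perturbation. First I would normalise $a(0)=\mathrm{id}$ by a linear change of the tangential variables, which affects only the constants. Next, working in geodesic normal coordinates for the metric $a^{-1}(y')$ on $\R^n$, extended trivially in the $y_{n+1}$--direction, the weighted distance $\la_{a^{-1}}(y)$ becomes the Euclidean distance $|y|$, so that $\phi$ acquires exactly the form used in Proposition~\ref{prop:KT1b}. Since $a\in C^{2}$, in these coordinates the metric tensor $g$ satisfies $g(0)=\mathrm{id}$ and $\partial g(0)=0$, hence $|g(y')-\mathrm{id}|\lesssim|y'|^{2}$ and $|\partial g(y')|\lesssim|y'|$; the volume element differs from the Euclidean one by $1+O(|y'|^{2})$, and the first--order (Laplace--Beltrami / Christoffel) terms arising when the operator is put in divergence form are bounded and carry a positive power of $|y'|$. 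The compact support hypothesis, with $r=r(a)$ small, guarantees these coordinates are well defined and the perturbation small.

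\textbf{Reduction to the model estimate.} I would then repeat verbatim the change to conformal coordinates $r=e^{t}$ and the conjugation with $e^{\tau\phi}$ from the proof of Proposition~\ref{prop:KT1b}: the model part of the conjugated operator, the splitting into the symmetric part $S$ and the antisymmetric part $A$, the positive commutator identity, the parametrix bound $|K_{\mu}(t,s)|\le\tau^{-1}e^{-\dist(\va'(t),\mu)|t-s|}$ in the critical regime and the ellipticity in the regimes $\lambda\ge4\tau$, $\lambda\le\frac{\tau}{2}$ are all unchanged, because $\phi$ is expressed through $|y|=e^{t}$ exactly as before. The variable--coefficient corrections add further terms to the resulting quadratic identity; the crucial observation is that, relative to the corresponding model terms, each such term carries at least one factor $e^{2t}$ (from $g-\mathrm{id}$, and from the subcritical part of $V$) or $e^{t}$ (from $\partial g$ and from the first--order divergence--form corrections), i.e.\ a power $|y|^{2}$ or $|y|$. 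On $\supp(w)\subset\overline{B_{r}^{+}(0)}$ with $r=r(a)$ small and for $\tau\ge\tau_{0}(a)$ these factors beat the logarithmic loss $(1+t^{2})^{1/2}$ in the weight (recall $|\va'|\sim\tau$ and $\va''\sim\tau(1+t^{2})^{-1}$), so the errors are controlled by a small multiple of the positive contributions $\tau\|(\va'')^{1/2}\va'\,\cdot\|_{L^{2}}^{2}+\tau\|(\va'')^{1/2}\nabla\,\cdot\|_{L^{2}}^{2}+\tau^{3}\|(\va'')^{1/2}\cdot\|_{L^{2}}^{2}$ together with $\|S\,\cdot\|_{L^{2}}^{2}$ and $\|A\,\cdot\|_{L^{2}}^{2}$, and can be absorbed. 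The infinite order of vanishing of $w$ at $0$ justifies the limiting procedure in the cut--off at the inner scale, exactly as in Step~2 of the proof of Corollary~\ref{cor:schp}. This yields the estimate of Proposition~\ref{prop:KT1b} with $|y|$ replaced by $\la_{a^{-1}}(y)$ and $h$ replaced by $Vw$ on the right.

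\textbf{The boundary potential.} The boundary term $\tau^{\frac{1-2s}{2}}\|e^{\tau\phi}\la_{a^{-1}}(y)^{s}Vw\|_{L^{2}(\R^{n})}$ is treated as in the proof of Corollary~\ref{cor:schp} under the differentiability assumptions stated in the hypotheses: in conformal coordinates the scaling--critical piece $V_{1}=e^{-2st}h(\theta)$ leaves, after integration by parts in $t$, only the single contribution $c\tau\int(\va'')h(\theta)u^{2}$, which is controlled by the trace inequality~\eqref{eq:trint}; the subcritical piece $V_{2}$, using $|y\cdot\nabla V_{2}|\lesssim|y|^{-2s+\epsilon}$, is integrated by parts in $t$ and then again estimated via~\eqref{eq:trint}, the gain $e^{\epsilon t}$ (equivalently $t<0$ on the support) absorbing the loss. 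One then transforms back to Cartesian coordinates and collects the constants, which depend on $\lambda$, $\Lambda$, $\|a\|_{C^{2}}$ and determine the admissible radius $r(a)$ and the threshold $\tau_{0}(a)$.

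\textbf{Main obstacle.} The delicate point is the second step: verifying that passing to geodesic normal coordinates produces only perturbations carrying a genuine positive power of $|y|$, so that the $\arctan$--modified weight, whose convexity $\va''$ degenerates only logarithmically, still dominates them, and — more subtly — that the principal--part perturbation $\nabla'\cdot(y_{n+1}^{1-2s}(g^{-1}-\mathrm{id})\nabla'\,\cdot)$ and the volume--element mismatch are genuinely \emph{absorbed} rather than merely bounded, using the parametrix estimate in the critical frequency regime together with the $O(|y|^{2})$ smallness of the coefficients. Everything else is a routine repetition of the constant--coefficient argument, and the manifold case~\eqref{vc:boundary} reduces to this one, the global Laplace--Beltrami first--order terms being precisely the controllable $O(e^{t})$ errors isolated above.
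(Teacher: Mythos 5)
Your overall architecture matches the paper's: pass to geodesic normal coordinates for $a^{-1}$ (so that $\la_{a^{-1}}$ becomes $|y|$ and $\phi$ takes the model form), then to geodesic polar and conformal coordinates, absorb the metric errors using the positive power of $|y|$ they carry against the logarithmically degenerate convexity $\va''\sim\tau(1+t^2)^{-1}$, and treat the boundary potential by integration by parts in $t$ plus the trace inequality (\ref{eq:trint}) exactly as in the proof of Corollary \ref{cor:schp}. Your identification of the delicate point — that the perturbations must carry a genuine power of $|y|$ to beat the $\arctan$-weight's weak convexity — is correct and is precisely what the support condition $r=r(a)\ll1$ is for.

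However, there is a genuine gap in your central reduction step. You propose to ``repeat verbatim'' the proof of Proposition \ref{prop:KT1b}, including the projection onto eigenfunctions of the spherical operator, the parametrix bound $|K_\mu(t,s)|\le\tau^{-1}e^{-\dist(\va'(t),\mu)|t-s|}$, and the elliptic regimes $\lambda\ge4\tau$, $\lambda\le\tau/2$, and then to absorb the variable-coefficient corrections as perturbations. This cannot work as stated, for two reasons. First, after the change of coordinates the spherical metric $g_{\theta\theta}(r,\theta)$ depends on $r=e^{t}$, so the spherical operator is no longer $t$-independent; its eigenprojections do not commute with $\dt$, and the reduction to a one-parameter family of ODEs — on which the entire parametrix construction rests — is simply unavailable. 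Second, even if one keeps the model decomposition and tries to treat $\tilde\nabla_{S^n}\cdot\theta_n^{1-2s}(g^{-1}-\mathrm{id})\tilde\nabla_{S^n}$ perturbatively via $\|Lu\|\ge\|L_0u\|-\|(L-L_0)u\|$, the perturbation is of \emph{second} order while the left-hand side of the Carleman estimate controls only first derivatives (with weight $\tau^{-1/2}(\va'')^{1/2}$); the $O(|y|^2)$ smallness of the coefficients does not compensate for the missing derivative. The paper avoids both problems by abandoning the eigenfunction decomposition entirely in the variable-coefficient setting and running the full-operator commutator argument (the one used in the proof of Corollary \ref{cor:schp} for scaling-critical potentials): one computes $(Sv,Av)$ and the weighted gradient identity directly, integrating by parts so that the metric perturbations only ever appear multiplied by first derivatives of $v$ and first derivatives of $g$, producing errors of the form $\tau\int|\dt\phi|\,|\tilde\nabla v|^2|\nabla g|\,d\theta\,dt$ that are then absorbed into the $c\tau\|(\dt^2\phi)^{1/2}\tilde\nabla v\|^2$ term. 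Your phrase ``add further terms to the resulting quadratic identity'' gestures at this, but it is incompatible with retaining the parametrix; you must commit to the quadratic-form route and drop the eigenvalue decomposition. Your boundary-term treatment and the final absorption of first-order errors are then consistent with the paper's Steps 3 and 4.
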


\begin{rmk}
\begin{itemize}
\item The $C^2$ regularity condition on the metric is an artifact of our strategy of proof: We make use of the exponential map associated with the metric $a^{-1}(y')$ in order to pass to geodesic polar coordinates. An alternative strategy using arguments from \cite{KT1} would have been possible. With this method it is possible to reduce to the (optimal) setting of Lipschitz metrics.
\item The radius $r>0$ in the proposition is chosen so small that we may pass to geodesic normal coordinates in it. This is no restriction in general, as it is possible to use appropriate cut-off functions.
\item We use the notation $a(y')^{-1}$ to denote the pointwise inverse of $a(y')$, i.e. $$a(y')^{-1}a(y')= \delta_{ij}.$$
\end{itemize}
\end{rmk}

In order to prove the desired Carleman inequality, we carry out a change of coordinates similar to the one described in the article of Koch and Tataru \cite{KT1}. Working with variable metrics, we have to introduce appropriate normal coordinates first. Thus, we cast our equation into a Riemannian framework where the Riemannian metric $g$ is given by $a^{-1}$. We note that after the change of coordinates our argument strongly resembles the proof of Corollary \ref{cor:schp} in the case of .

\begin{proof}[Proof of Proposition \ref{prop:Hardypot}]
Step 1: Choice of Coordinates. 
We cast the equation into a Riemannian framework. In this context we may interpret the tangential part of the operator as
\begin{align*}
\nabla' \cdot a(y') \nabla' = \D_{a^{-1}}' - \frac{1}{2}v_{a^{-1}}(y')\cdot a(y') \nabla' = \D'_{a^{-1}} - \frac{1}{2}v_{a^{-1}}(y')\cdot \nabla_{a^{-1}}',
\end{align*}
where $v_{a^{-1}}(y')$ is a vector with $i$-th component given by $v_{a^{-1},i}(y')= \tr (a^{-1}(y') \frac{\p a}{\p y_{i}})$. Here $\D_{a^{-1}}'$ and $\nabla_{a^{-1}}'$ denote the Laplace-Beltrami and gradient operators with respect to the metric $a(y')^{-1}$. We point out that the thus introduced metric is truly Riemannian as -- due to the $y'$-dependence of $a$ -- it depends on the point of evaluation. For the moment, we ignore the first order contribution in the definition of our operator. It can be considered as ``small" and can be treated as a controlled error contribution.\\
With this interpretation of the tangential operator, the full operator can be interpreted as a (degenerate) elliptic operator acting on the Riemannian manifold $(\R_{+}\times \R^{n}, 1\times a(y')^{-1})$.
In this setting, we aim at reducing the situation to geodesic polar coordinates. These can be obtained by first introducing Riemannian normal coordinates in the tangential directions and then passing to (geodesic) polar coordinates in the tangential and normal variables. \\
We commence by considering the tangential geometry: We may interpret it as the manifold $(\R^{n},a_{ij}(y')^{-1})$. Using (the locally well-defined) exponential map, we obtain normal coordinates on an open subset of $\R^n$ (here we make use of the $C^2$ condition on the metric $g$). As our Carleman estimates are formulated as local estimates for functions which are supported sufficiently close to zero, we assume that the change of coordinates is a global one and that our new manifold is given by $(\R^n,\bar{g}_{ij})$. This change of coordinates straightens out the geodesics passing through the origin.\\
Now we consider the full operator in the whole of $(\R_+ \times \R^n, 1 \times  \bar{g}_{ij})$ and introduce polar, instead of Cartesian coordinates in $\R^{n+1}_+$. This leads to a new spherical metric $g_{\theta \theta}$ and to a modified operator:
\begin{align*}
&\theta_{n}^{1-2s}\frac{1}{r^n}\drr(r^{n+1-2s}\drr) + \theta_{n}^{1-2s} r^{-1-2s} \frac{1}{2} \tr(g_{\theta \theta} \drr g_{\theta \theta}^{-1}) \drr\\
&+ r^{-1-2s}\frac{1}{\sqrt{\det g_{\theta \theta}}}\p_{\theta_i}\cdot \theta_{n}^{1-2s}g_{\theta\theta}^{-1}(r,\theta)\sqrt{\det{g_{\theta \theta}}}\p_{\theta_{j}}.
\end{align*}
In the sequel, we will also denote the spherical metric $g_{\theta\theta}(r,\theta)$ by $g(r,\theta)$ and ignore the first order term involving the derivatives of $g_{\theta \theta}$. Due to the smallness of the homogeneous Lipschitz norm of $g$, it can be treated as a controlled error contribution which can be absorbed in the positive bulk terms.\\
We carry out the change into conformal coordinates, i.e. $r=e^{t}$, which yields $\drr = e^{-t}\dt$. This results in
\begin{align*}
e^{-(1+2s)t}\left[ \theta_{n}^{1-2s} \dt^2 + \left(n-2s \right)\theta_{n}^{1-2s}\dt + \tilde{\nabla}_{S^{n}}\cdot \theta_{n}^{1-2s}  \tilde{\nabla}_{S^{n}} \right],
\end{align*}
where for brevity of notation we used $\tilde{\nabla}_{S^n}$ to denote the spherical gradient with respect to our (non-standard) spherical metric. Conjugating with $e^{-\frac{n-2s}{2}t}$ (which corresponds to setting $w=e^{- \frac{n-2s}{2}t}u$) and multiplying the operator with $e^{(1+2s)t}$, results in
\begin{align*}
\theta_{n}^{1-2s}\left(\dt^2 - \frac{(n-2s)^2}{4}\right) + \tilde{\nabla}_{S^{n}}\cdot \theta_{n}^{1-2s} \tilde{\nabla}_{S^{n}}.
\end{align*}
Due to the product structure of our original manifold, the boundary condition turns into
$\lim\limits_{\theta_{n}\rightarrow 0} \theta_n^{1-2s} \p_{\va_{n}} u = e^{2st}V u.$
In analogy to the flat case and with a slight abuse of notation, we use the symbol $d \theta$ to denote the volume form of our (non-standard) spherical metric. In the sequel all the integrals will be computed with respect to this volume form.\\

Step 2: Computing the Commutator.
In order to separate the spherical and the radial variables, we set $u = \theta^{\frac{2s-1}{2}}_{n}v$ and multiply with $\theta_{n}^{\frac{2s-1}{2}}$. Although the function $v$ becomes increasingly singular (if $s>\frac{1}{2}$), this form of the equation has the advantage that the operator is symmetric and strictly separates the radial and spherical variables. Thus -- up to the first order error terms originating from the first step  -- our equation turns into
\begin{align*}
\dt^2 - \frac{(n-2s)^2}{4}+ \theta^{\frac{2s-1}{2}}_{n}\tilde{\nabla}_{S^{n}}\cdot \theta_{n}^{1-2s} \tilde{\nabla}_{S^{n}}\theta^{\frac{2s-1}{2}}_{n}.
\end{align*}
Conjugation with an only $t$-dependent weight, $\phi$, leads to the following ``symmetric and antisymmetric'' parts of the operator:
\begin{equation*}
\begin{split}
S & = \dt^2 + \tau^2(\dt \phi)^2 - \frac{(n-2s)^2}{4} + \theta^{\frac{2s-1}{2}}_{n}\tilde{\nabla}_{S^{n}}\cdot \theta_{n}^{1-2s}\tilde{\nabla}_{S^{n}} \theta^{\frac{2s-1}{2}}_{n},\\
A & = -2\tau (\dt \phi)\dt - \tau \dt^2 \phi.
\end{split}
\end{equation*}
We point out that the $\dt$-contributions are not actually symmetric and antisymmetric with respect to our non-standard spherical metric, yet this separation of the full operator into $S$ and $A$ proves to be convenient for the calculations of the pairing $(Su,Au)_{L^2(S^n_+\times \R)}$. All the occurring error terms can be controlled.
If $\phi$ is sufficiently pseudoconvex the separation into $S$ and $A$ yields the following ``commutator'' terms:
\begin{align*}
&4\tau^3 \left\| (\dt^2 \phi)^{\frac{1}{2}} \dt \phi v \right\|_{L^2( S^n_+ \times \R)}
+ 4\tau \left\| (\dt^2 \phi)^{\frac{1}{2}} \dt v \right\|_{L^2( S^n_+ \times \R)}
- \tau \int\limits_{ S^{n}_+ \times \R} \dt^{4} \phi v^2 d\theta dt
\\
&+ \mbox{(ER)},
\end{align*}
where (ER) is used to denote any bulk term involving derivatives of $g$ which is controlled by
\begin{equation}
\begin{split}
\label{eq:variableerr}
\tau \int\limits_{ S^{n}_+ \times \R} |\dt \phi| |\tilde{\nabla} v|^2 |\nabla g| d\theta dt.
\end{split}
\end{equation}
We remark that all integrals are calculated with respect to our non-standard spherical metric.
In these calculations one has to be slightly more careful than in the case of the standard sphere as the metric tensor, and thus the volume element, also depends on the $t$-variable. As a consequence, it is more convenient to calculate some of the quantities appearing in $(Su,Au)_{L^2_{g}(S^n_+ \times \R)}$ directly, instead of symmetrizing and antisymmetrizing the respective contributions. 
Contributions of the form (ER) will be treated as errors, c.f. Step 4.\\
Furthermore, weighted gradient estimates can be obtained:
\begin{equation}
\label{eq:vcoor}
\begin{split}
& ((\dt^2 \phi) \dt v, \dt v) + ((\dt^2 \phi)\theta^{1-2s}_{n} \tilde{\nabla}_{S^{n}}\theta_{n}^{-\frac{1-2s}{2}}v,\tilde{\nabla}_{S^{n}}\theta_{n}^{-\frac{1-2s}{2}}v)\\
=& \ -(Sv,(\dt^2 \phi) v) + \int\limits_{\p S^{n}_{+}\times \R} (\dt^2 \phi)(\theta^{1-2s}_{n} \nu \cdot \tilde{\nabla}_{S^{n}}\theta^{- \frac{1-2s}{2}}_{n}v) \theta^{- \frac{1-2s}{2}}_{n}v d\theta dt \\
& + ((\dt^4 \phi) v, v) + \mbox{(ER)}\\
 \leq & \ \frac{1}{2 \tau^2 } \left\| Sv \right\|_{L^2}^2 + \frac{\tau^2}{2} \left\| (\dt^2 \phi) v\right\|_{L^2}^2
+ \tau^2 \left\| (\dt^2 \phi)^{\frac{1}{2}}\dt \phi v \right\|_{L^2}^2 - \frac{(n-2)^2}{4} \left\| (\dt^2 \phi)^{\frac{1}{2}} v \right\|_{L^2}^2\\
&+\int\limits_{\p S^{n}_{+}\times \R} (\dt^2 \phi) (\theta^{1-2s}_{n} \nu \cdot \tilde{\nabla}_{S^{n}}\theta^{- \frac{1-2s}{2}}_{n}v) \theta^{- \frac{1-2s}{2}}_{n}v d\theta dt\\
& + ( (\dt^4 \phi) v, v) + \mbox{(ER)},
\end{split}
\end{equation}
where $\nu = (0,...,0,-1)$ denotes the outer unit normal.
For sufficiently pseudoconvex weight, $\phi$, the right hand side can even be controlled via the commutator contributions if everything is multiplied by a factor of $c\tau$, for example $c \sim \frac{1}{2}$ would work. 
The boundary integral can be evaluated to yield
\begin{align*}
\int\limits_{\p S^{n}_{+}\times \R}  (\dt^2 \phi) (\theta^{1-2s}_{n}\nu \cdot \tilde{\nabla}_{S^{n}}\theta^{- \frac{1-2s}{2}}_{n}v) \theta^{- \frac{1-2s}{2}}_{n}v d\theta dt\\
 = \int\limits_{\p S^{n}_{+}\times \R} \theta_{n}^{-(1-2s)} (\dt^2 \phi) e^{2st} Vv^2 d\theta dt,
\end{align*} 
where by a slight abuse of notation we also denote the lower dimensional volume form by $d\theta dt$.
We note that the gradient contribution in (\ref{eq:vcoor}) (multiplied with $\tau$) in particular suffices to absorb the bulk contribution of (\ref{eq:variableerr}).\\
The remaining boundary integral which originates from the commutator calculation is given by
\begin{align*}
&4\tau \int\limits_{\p S^{n}_{+}\times \R} (\theta^{1-2s}_{n} \nu \cdot  \tilde{\nabla}_{S^{n}}\theta^{- \frac{1-2s}{2}}_{n}v) (\dt \phi) \theta^{- \frac{1-2s}{2}}_{n}\dt v d\theta dt + \mbox{(BER)} \\
&+ 2\tau \int\limits_{\p S^{n}_{+}\times \R} ( \theta^{1-2s}_{n} \nu \cdot  \tilde{\nabla}_{S^{n}}\theta^{- \frac{1-2s}{2}}_{n}v) (\dt^2 \phi) \theta^{- \frac{1-2s}{2}}_{n} v d\theta dt \\
= & \ 4\tau \int\limits_{\p S^{n}_{+}\times \R} (\dt \phi) \theta_{n}^{-(1-2s)} e^{2st}V v \dt v d\theta dt\\
&+ \ 2\tau \int\limits_{\p S^{n}_{+}\times \R} (\dt^2 \phi) \theta_{n}^{-(1-2s)}  e^{2st}V v^2 d\theta dt + \mbox{(BER)},
\end{align*}
where (BER) denotes boundary contributions involving derivatives of the metric, e.g. terms bounded by $ \tau \int\limits_{\p S^{n}_+ \times \R} |\dt \phi \dt \bar{g} e^{2st} V| u^2 d\theta dt$.
Rewritten in terms of $u = \theta_{n}^{-\frac{1-2s}{2}} v$ the Carleman estimate reads
\begin{equation}
\label{vc:Carl}
\begin{split}
&c\tau \left\| (\dt^2 \phi)^{\frac{1}{2}} \theta^{\frac{1-2s}{2}}_{n} \dt u \right\|_{L^2}^2
+ c\tau \left\| (\dt^2 \phi)^{\frac{1}{2}} \theta^{\frac{1-2s}{2}}_{n} \tilde{\nabla}_{S^{n}} u \right\|_{L^2}^2\\
&+ c\tau^3  \left\| \theta^{\frac{1-2s}{2}}_{n}(\dt^2 \phi)^{\frac{1}{2}}(\dt \phi) u \right\|_{L^2}^2\\
&+ \left\| S (\theta^{\frac{1-2s}{2}}_{n} u) \right\|_{L^2}^2
+ \tau^{-1} \left\| (\dt^2 + \theta^{-\frac{1-2s}{2}}_{n} \tilde{\nabla}_{S^{n}}\cdot \theta_{n}^{1-2s} \tilde{\nabla}_{S^n}  \theta^{- \frac{1-2s}{2}}_{n}) \theta^{ \frac{1-2s}{2}}_{n} u \right\|_{L^2}^2\\
& + 4\tau \int\limits_{\p S^{n}_{+}\times \R} (\dt \phi) e^{2st} V  u \dt u d\theta dt
+ 2\tau \int\limits_{\p S^{n}_{+}\times \R} (\dt^2 \phi) e^{2st} V  u^2 d\theta dt\\
&+ c\tau \int\limits_{\p S^{n}_{+}\times \R}(\dt^2 \phi) e^{2st} V  u^2 d\theta dt
+ \mbox{(BER)}\\
\leq & \ \left\| L_{\phi} u \right\|_{L^2}^2,
\end{split}
\end{equation}
where
\begin{align*}
L_{\phi} = & \ \theta^{ \frac{1-2s}{2}}_{n}(\dt^2 + \tau^2 (\dt \phi)^2 - \frac{(n-2s)^2}{4} - 2\tau(\dt \phi)\dt - \tau \dt^2 \phi) \\
&+ \theta^{\frac{2s-1}{2}}_{n} \tilde{\nabla}_{S^{n}}\cdot \theta_{n}^{1-2s} \tilde{\nabla}_{S^n}.
\end{align*}
It remains to discuss the unsigned boundary contributions and the error terms. \\

Step 3: Bounding the Boundary Contributions.
In order to estimate the unsigned boundary contributions from the previous steps, we consider the respective expressions in polar coordinates as in (\ref{vc:Carl}). Starting with the scaling-critical potentials, we have to bound
\begin{align*}
& 4\tau \int\limits_{\p S^{n}_{+}\times \R}  (\dt \phi) e^{2st} V_{1} u \dt u d\theta dt
+ 2\tau \int\limits_{\p S^{n}_{+}\times \R}  (\dt^2 \phi) e^{2st} V_{1} u^2 d\theta dt\\
&+ c\tau \int\limits_{\p S^{n}_{+}\times \R}  (\dt^2 \phi) e^{2st} V_{1} u^2 d\theta dt
+ \tau \int\limits_{\p S^{n}_+ \times \R} |\dt \phi (\dt g) e^{2st} V_{1}| u^2 d\theta dt,
\end{align*}
i.e. we have to control the boundary integrals involving the potential $V_{1}=e^{-2st}h(\theta)$. By an integration by parts in $t$, we obtain that most contributions drop out. Indeed, the conditions on $a$ imply that the only non-vanishing terms can be estimated by
$
C\tau \int\limits_{\p S^{n}_{+}\times \R}|\dt^2 \phi| |h(\theta)| u^2 d\theta dt.
$
However, by appealing to the interpolation inequality (\ref{eq:trint}), this can be controlled by the positive quantities of the Carleman inequality: 
\begin{align*}
\tau \int\limits_{\p S^{n}_{+}\times \R}(\dt^2 \phi) u^2 d\theta dt \leq & \
\tau^{1-2s} \left\| (\dt^2 \phi)^{\frac{1}{2}} \theta^{\frac{1-2s}{2}}_{n} \nabla u \right\|_{L^2(S^n_+ \times \R)}^2\\
&+ \tau^{3-2s} \left\| (\dt^2 \phi)^{\frac{1}{2}} \theta^{\frac{1-2s}{2}}_{n}  u \right\|_{L^2(S^n_+ \times \R)}^2,
\end{align*}
where $\nabla =(\dt, \tilde{\nabla}_{S^n})$. Here we also used the explicit expression of $\phi$ and the support condition on $u$.\\
All the remaining boundary contributions involve the potential $V_{2}$ which has subcritical growth at zero. Due to the form of $\phi$, it suffices to deduce control of the term
\begin{align*}
4\tau \int\limits_{\p S^{n}_{+}\times \R} (\dt \phi) e^{2st} V_{2} u \dt u d\theta dt.
\end{align*}
Integrating by parts in $t$, using the subcriticality of $V_{2}$ and the properties of $\phi$ and $a$, it suffices to bound
\begin{align*}
C\tau \int\limits_{\p S^{n}_{+}\times \R} e^{\epsilon t}  u^2 d\theta dt.
\end{align*}
Again, this can be controlled by the interpolation inequality (\ref{eq:trint}).\\

Step 4: Treatment of the Error Contributions. It remains to comment on the first order error terms from step 1 and from the conjugation process. The terms from step 1 also undergo the conjugation process. Under this they either remain unchanged or involve a derivative of $\phi$ and a prefactor of $\tau$. Instead of including these contributions -- which, in the following, we denote by (Er) -- in the commutator calculation, we treat them as errors:
\begin{align*}
\left\| e^{\tau \phi} L w \right\|_{L^2} = \left\| (S + A + Er)u \right\|_{L^2}
\geq \left\| (S+A)u \right\|_{L^2} - \left\| (Er)u\right\|_{L^2}.
\end{align*}
Due to the assumptions on $a$ and the fact that these terms are only of first order, it is possible to absorb these specific errors -- as well as any error terms of the form (ER) -- into the positive commutator contributions which were deduced in step 3.
\end{proof}

Combined with a blow-up procedure comparable to the one carried out in Section \ref{sec:reduction}, the strong unique continuation property follows. As the blown-up solution satisfies an equation with \emph{constant} coefficients, the strong unique continuation result can be regarded as a consequence of a weak unique continuation statement of the same flavour as the one presented in Section \ref{sec:weak}.

\subsection{Carleman Inequalities in the Case of Non-Flat Domains}
The previous discussion of the situation in $\R^{n+1}_+$ illustrates that it is possible to deal with our (degenerate) elliptic operators if they are defined on a manifold of the form $(\R_{+}\times M, 1\times  g_{ij})$. Here $(M,g_{ij})$ is a Riemannian manifold which has -- in a Lipschitz sense -- a metric which is sufficiently close to a constant non-degenerate metric. In particular, this allows to deal with operators of the form (\ref{vc:boundary}) -- i.e. operators in which there is a clear distinction between normal and tangential variables, as, sufficiently close to the boundary, an appropriate choice of normal coordinates allows to cast the equation into (a lower order perturbation of) the previously discussed setting of $\R^{n+1}_+$.

\subsection{Comments on the Situation with a Spectral Gap}
In settings involving a spectral gap, the situation improves significantly. In fact, under these assumptions it is possible to argue as in the article of Koch and Tataru \cite{KT1} in which a radial summability condition is required -- which is based on stronger estimates originating from a spectral gap condition. Thus, in situations involving a spectral gap, it is possible to control equations with leading order contributions of the form
$\p_{i}  y_{n+1}^{1-2s} g_{ij}(y) \p_{j}$ and bounds of the type $|y||\nabla g|\in l^{1}(L^{\infty})$, c.f. \cite{KT1}.

\bibliography{citations}
\bibliographystyle{alpha}

\end{document}